
\documentclass{amsart}%
\usepackage{amssymb, amsmath,  amsfonts}%
\usepackage{amsmath}%
\setcounter{MaxMatrixCols}{30}%
\usepackage{amsfonts}%
\usepackage{amssymb}%
\usepackage{graphicx}
\newtheorem{thm}{Theorem}[section]
\newtheorem{prop}[thm]{Proposition}
\newtheorem{lemma}[thm]{Lemma}
\newtheorem{cor}[thm]{Corollary}
\newtheorem{defn}[thm]{Definition}
\newtheorem{example}[thm]{Example}
\newtheorem{remark}[thm]{Remark}

\numberwithin{equation}{section}

\begin{document}
\title{Quantum Pieri rules for tautological subbundles}
\author{Naichung Conan Leung}
\address{The Institute of Mathematical Sciences and Department of Mathematics, The
Chinese University of Hong Kong, Shatin, Hong Kong}
\email{leung@math.cuhk.edu.hk}
\author{Changzheng Li}
\address{Kavli Institute for the Physics and Mathematics of the Universe (WPI),
  The University of Tokyo,
5-1-5 Kashiwa-no-Ha,Kashiwa City, Chiba 277-8583, Japan}
\email{changzheng.li@ipmu.jp}
\thanks{2000 Mathematics Subject Classification: 14N35, 14M15.}
\date{}

\begin{abstract}
We give quantum Pieri rules for quantum cohomology of Grassmannians of
classical types, expressing the quantum product of Chern classes of 
the tautological subbundles with general cohomology classes. We derive them
by showing the relevant genus zero, three-pointed Gromov-Witten invariants
coincide with certain classical intersection numbers.

\end{abstract}
\maketitle

{\allowdisplaybreaks[4] }










\section{Introduction}

The complex Grassmannian $Gr(k,n+1)$ parameterizes $k$-dimensional complex
vector subspaces of $\mathbb{C}^{n+1}$. It can be written as $X=G/P$ with $G$
being a complex Lie group of type $A$, i.e. $G=SL(n+1,\mathbb{C})$, and $P$ being
a maximal parabolic subgroup of $G$. We will continue to call such $X$'s as
\textit{Grassmannians} even when $G$ is not of type $A$. Indeed when $G$ is a
classical Lie group of type $B,C$ or $D$, such a Grassmannian parameterizes
subspaces in a vector space which are isotropic with respect to a
non-degenerate skew-symmetric or symmetric bilinear form. Therefore it is
usually called an \textit{isotropic Grassmannian}. Recall that the
tautological subbundle $\mathcal{S}$ over any point $[V]\in Gr(k,n+1)$ is just
the $k$-dimensional vector subspace $V$ itself. And it restricts to the
tautological subbundle $\mathcal{S}$ of any isotropic Grassmannian.

The cohomology ring $H^{\ast}(X,\mathbb{Z})$ of an isotropic Grassmannian
$X=G/P$, or more generally a generalized flag variety, has a natural basis
consisting of Schubert cohomology classes $\sigma^{u}$, labelled by a subset of
the Weyl group $W$ of $G$. The (small) quantum cohomology ring $QH^{\ast}(X)$
of $X$, as a vector space, is isomorphic to $H^{\ast}(X)\otimes\mathbb{Q}[t]$.
The quantum ring structure is a deformation of the ring structure on $H^{\ast
}(X)$ by incorporating three-pointed, genus zero Gromov-Witten invariants of
$X$. Since $H_{2}\left(  X,\mathbb{Z}\right)  \cong\mathbb{Z}$, the homology
class of a holomorphic curve in $X$ is labelled by its degree $d$.
In the case of $X=IG(k, 2n)$ being a Grassmannian of type $C_n$, the Schubert cohomology
classes $\sigma^{u}=\sigma^{\mathbf{a}}$ can also be labelled by
\textit{shapes} $\mathbf{a}$, which are certain pairs of partitions.
Every nonzero Chern class   $c_{p}(\mathcal{S}^{*})=(-1)^pc_p(\mathcal{S})=\sigma^{p}$ (up to a scale factor of $2$) is then a special Schubert class given by
a special shape $p$, and they generate the quantum cohomology ring $QH^*(IG(k, 2n))$. One of the main results of the present paper is the following
 formula.

\noindent\textbf{Quantum Pieri Rule for tautological subbundles of $IG(k, 2n)$}(Theorem \ref{thmQPRforIGIG222}) {\itshape
For any shape $\mathbf{a}$ and every special shape $p$,  in $QH^*(IG(k, 2n))$, we have  \[
\sigma^{p}\star\sigma^{\mathbf{a}}=\sum2^{e(\mathbf{a},\mathbf{b})}%
\sigma^{\mathbf{b}}+t\sum2^{e(\tilde{\mathbf{a}},\tilde{\mathbf{c}})}%
\sigma^{\mathbf{c}}.
\]
}

\noindent Here $\tilde{\mathbf{a}}$ and $\tilde{\mathbf{c}}$ are shapes
associated to $\mathbf{a}$ and $\mathbf{c}$ respectively;  $e(\mathbf{a},
\mathbf{b})$ and ${e(\tilde{\mathbf{a}}, \tilde{\mathbf{c}})}$ are
cardinalities of certain combinatorial sets,
determined by  the classical Pieri rules of Pragacz and Ratajski \cite{pragrat}. We have also obtained similar formulas for Grassmanianns of type $B$ and $D$, details of which are given in section \ref{sectionreformalation}.

The aforementioned \textit{quantum Pieri rule}  is a quantum version of the classical Pieri rule    for isotropic
Grassmannians. The famous classical Pieri rules are known firstly for complex
Grassmannians (see e.g. \cite{gri}). For $X=Gr(k, n+1)$, they describe the cup
product of a general Schubert class in $H^{*}(X)$ with  $c_{p}(\mathcal{S}%
^{*})$ or $c_{p}(\mathcal{Q})$, where $\mathcal{Q}$ is the tautological
quotient bundle over $X$ given by the exact sequence  $0\rightarrow
\mathcal{S}\rightarrow\mathbb{C}^{n+1}\rightarrow\mathcal{Q}\rightarrow0$.
It was generalized for other partial flag varieties of type $A$, firstly given by Lascoux and Sch\"utzenberger \cite{LasSch},
and was also   generalized for  Grassmannians $X$ of type $B, C$ or $D$.
Note that there is also a tautological quotient bundle $\mathcal{Q}$ over $X$.  When $X$ parameterizes maximal isotropic subspaces,
 (roughly speaking) there is no difference between the Chern classes
   of $\mathcal{S}^*$ and $\mathcal{Q}$, and  the classical
  Pieri rules  has been  given by Hiller-Boe \cite{HillerBoe}.
When     $X$
parameterizes non-maximal isotropic subspaces, the classical Pieri rules with
respect to $c_{p}(\mathcal{S}^{*})$ have been given by  Pragacz and Ratajski
(\cite{pragrat}, \cite{pragratEvenorth}), while  the classical Pieri rules with
respect to $c_{p}(\mathcal{Q})$ are just covered in the recent work of Buch, Kresch and Tamvakis \cite{BKT2} on quantum Pieri rules.
In contrast to complex Grassmannians, knowing either of them cannot deduce the other one.
There is also a previous work of
Sert\"oz \cite{Sertoz}  as well as a generalized classical Pieri rule given by
Bergeron and Sottile \cite{bergSottile}, which gives the formula for multiplying a Schubert class
on a complete flag variety of type $B$ or $C$ by a special Schubert class
pulled back from the Grassmanian of maximal isotropic subspaces.

The story of quantum Pieri rules are almost parallel to the story of the classical Pieri rules.
 The quantum  Pieri rules are also known firstly for complex
Grassmannians, which were firstly  given by   Bertram \cite{bert}.
They  were  generalized by Ciocan-Fontanine \cite{CFon} for other partial flag varieties of type $A$,
and by Kresch-Tamvakis (\cite{KreschTamvakisLagran}, \cite{KreschTamvakisortho}) for those $X$ that parameterize maximal isotropic subspaces.
Recently in \cite{BKT2},   Buch, Kresch and Tamvakis have given  us the quantum Pieri rules with
respect to $c_{p}(\mathcal{Q})$ for   those $X$ that parameterize non-maximal isotropic subspaces.
 In contrast to complex Grassmannians, (in general) the quantum
Pieri rules with respect to $c_{p}(\mathcal{Q})$ do not imply the
quantum Pieri rules with respect to  $c_{p}(\mathcal{S}^{*})$ and vice versa.

Our quantum Pieri rules are consequences of the following main technical result.

  \bigskip

\noindent\textbf{Main Theorem. }{\itshape Let $X=G/P$ be a Grassmannian of
type $B,C$ or $D$, and
$\mathcal{S}$ denote the tautological subbundle over $X$. Let $\sigma
^{u},\sigma^{v}$ be Schubert classes in $QH^{\ast}(X)$ with $\sigma^{u}%
=(-1)^pc_{p}(\mathcal{S})$ (possibly up to a scale factor of ${\frac{1}{2}}$, see section \ref{subsectisotroGrassm} for more details)
for any $p$. In the quantum product
\[
\sigma^{u}\star\sigma^{v}=\sigma^{u}\cup\sigma^{v}+\sum_{d\geq1}N_{u,v}%
^{w,d}t^{d}\sigma^{w},
\]
all the degree }$d$ {\itshape Gromov-Witten invariants $N_{u,v}^{w,d}$ coincide with
certain classical intersection numbers. More precisely, we have

\begin{enumerate}

\item If $d=1$, then there exist $u_{1}, v_{1}, w_{1}\in W$ such that $N_{u,
v}^{w, 1}=N_{u_{1}, v_{1}}^{w_{1}, 0}$.

\item If $d=2$, then there exist $u_{2}, v_{2}, w_{2}\in W$ such that $N_{u,
v}^{w, 2}=N_{u_{2}, v_{2}}^{w_{2}, 0}$.

\item If $d\geq3$, then $N_{u, v}^{w, d}=0$.
\end{enumerate}
}
\noindent Here $N_{u_{i},v_{i}}^{w_{i},0}$'s are classical intersection
numbers of the corresponding Schubert varieties in the complete flag variety
$G/B$, where $B\subset P$ is a Borel subgroup of $G$. The elements
$u_{i},v_{i},w_{i}$ can be explicitly written down in terms of $u,v,w$ as
given in Theorem \ref{thmQPRforIGIG} and Theorem \ref{thmQPRforOGOG}. In fact
$N_{u,v}^{w,2}$ also vanishes for some cases.

The above theorem  is an     application of the main results  of \cite{leungliQtoC}, where
the authors studied the \textquotedblleft quantum to classical" principle for
flag varieties of general type.
  Roughly speaking, the \textquotedblleft quantum to
classical" principle says that certain three-pointed genus zero Gromov-Witten
invariants are classical intersection numbers. Such phenomenon,
   probably for the first time, occurred in the proof
of quantum Pieri rule for partial flag varieties of type $A$ by
Ciocan-Fontanine \cite{CFon}, and later occurred in the elementary proof of
quantum Pieri rule for complex Grassmannians by Buch \cite{buch} and the work \cite{KreschTamvakisLagran}, \cite{KreschTamvakisortho} of Kresch and Tamvakis
 on Lagrangian and orthogonal Grassmannians.
This principle has been studied mainly for Grassmannians  in the works (especially) by  Buch-Kresch-Tamvakis
  (\cite{BKT1}, \cite{BKT2}), by Chaput-Manivel-Perrin (\cite{cmn}, \cite{chper}) and  by    Buch-Mihalcea (\cite{buchMihalcea},
   \cite{buchMihalcea22}).
There are relevant studies for some other cases   by Coskun \cite{coskun33} and by Li-Mihalcea  \cite{liMihal}.

The proofs of our quantum Pieri rules  are
combinatorial in nature. The ideas of all these proofs are the same,  namely we obtain all the theorems by showing that
the relevant Gromov-Witten invariants of degree $d$ vanish unless
$d$ is small enough (for instance $d\leq2$), and for such a small $d$ they
coincide with certain classical intersection numbers. Moreover, all these relevant classical intersection numbers   are exactly or can be calculated from certain structure constants
 in classical Pieri rules of same type.
We should note that in \cite{leungli33}, the authors  established   natural  filtered algebra structures
on $QH^{\ast}\left(G/B\right)$. Using     structures of
these filtrations, we    obtained relationships among three-pointed genus zero Gromov-Witten invariants for $G/B$ in \cite{leungliQtoC},
which enable us to carry out the above ideas in real proofs.
 Finally, we should also note that our quantum Pieri rules for type $B, D$ are not quite satisfying, as signs are involved in some cases.

This  paper is organized as follows. In section 2, we fix the notations and
review the main results of \cite{leungliQtoC}.  In section 3, we
reduce all the relevant Gromov-Witten invariants to certain classical intersection numbers, for   the
quantum Pieri rules for Grassmannians of type $B, C$ or $D$ with respect to
$c_{p}(\mathcal{S}^{*})$. In section 4, we obtain the quantum Pieri rules by computing those classical intersection
numbers in section 3 combinatorially. Finally in the appendix, we   reprove the well-known quantum Pieri rules for
Grassmannians of type $A$ (i.e. complex Grassmannians).

\section{Preliminary results}

\subsection{Notations}

\label{prelimiar}

We recall notations in \cite{leungliQtoC} which we will use here as well.
Readers can refer to section 2.1 of \cite{leungliQtoC} and references therein
for more details.

Let $G$ be a simply-connected complex simple Lie group of rank $n$,  $B\subset
G$ be a Borel subgroup and $\mathfrak{h}$ be the corresponding Cartan
subalgebra.  Let $\Delta=\{\alpha_{1}, \cdots, \alpha_{n}\}\subset
\mathfrak{h}^{*}$ be the simple  roots with the associated Dynkin diagram
$Dyn(\Delta)$ being the same as in section 11.4 of  \cite{hum}.  Let
$\{\alpha_{1}^{\vee}, \cdots, \alpha_{n}^{\vee}\}\subset\mathfrak{h}$ be the
simple  coroots, $\{\chi_{1}, \cdots, \chi_{n}\}$ be the fundamental weights
and  $R^{+}$ be the set of positive roots in the root system $R$.  Denote
$Q^{\vee}=\bigoplus_{i=1}^{n}\mathbb{Z}\alpha_{i}^{\vee}$ and $\rho=\sum
_{i=1}^{n}\chi_{i}$.  The Weyl group $W$ is generated by  the simple
reflections $s_{i}$'s on $\mathfrak{h}^{*}$  defined by $s_{i}(\beta
)=s_{\alpha_{i}}(\beta):=\beta-\langle\beta, \alpha_{i}^{\vee}\rangle
\alpha_{i}$ for each $i$, where  $\langle\cdot, \cdot\rangle:\mathfrak{h}%
^{*}\times\mathfrak{h}\rightarrow\mathbb{C}$ is the natural pairing.  Each
parabolic subgroup $P\supset B$ is in one-to-one correspondence with  a subset
$\Delta_{P}\subset\Delta$.  Let $\ell: W\rightarrow\mathbb{Z}_{\geq0}$ be the
length function, $W_{P}$ denote  the Weyl subgroup generated by $\{s_{\alpha
}~|~ \alpha\in\Delta_{P}\}$ and  $W^{P}$ denote the minimal length
representatives of the cosets $W/W_{P}$. Let $\omega_{P}$ denote the (unique)
longest element in $W_{P}$.

The (co)homology of a (generalized) flag variety $X=G/P$ is torsion free and
it has an additive basis of Schubert (co)homology classes  $\sigma_{u}$'s
(resp. $\sigma^{u}$'s) indexed by $W^{P}$.  Note that $\sigma^{u}\in
H^{2\ell(u)}(X, \mathbb{Z})$ and that  $H_{2}(X,\mathbb{Z})=\bigoplus
_{\alpha_{i}\in\Delta\setminus\Delta_{P}}  \mathbb{Z}\sigma_{s_{i}}$ can be
canonically identified with  $Q^{\vee}/Q^{\vee}_{P}$, where $Q^{\vee}%
_{P}:=\bigoplus_{\alpha\in\Delta_{P}}\mathbb{Z}\alpha^{\vee}$.  For each
$\alpha_{j}\in\Delta\setminus\Delta_{P}$, we introduce a formal variable
$q_{\alpha_{j}^{\vee}+Q^{\vee}_{P}}$.  For $\lambda_{P}=\sum_{\alpha_{j}%
\in\Delta\setminus\Delta_{P}}a_{j}\alpha_{j}^{\vee}+Q^{\vee}_{P}\in H_{2}(X,
\mathbb{Z})$,  we denote $q_{\lambda_{P}}=\prod_{\alpha_{j}\in\Delta
\setminus\Delta_{P}}q_{\alpha_{j}^{\vee}+Q^{\vee}_{P}}^{a_{j}}$.  The
(\textbf{small}) \textbf{quantum cohomology} $QH^{*}(X)=(H^{*}(X)\otimes
\mathbb{Q}[\mathbf{q}], \star)$ of $X$  is a commutative ring and has a
$\mathbb{Q}[\mathbf{q}]$-basis of Schubert classes $\sigma^{u}=\sigma
^{u}\otimes1$.  The { structure coefficients} $N_{u, v}^{w, \lambda_{P}}$ for
the  quantum product
\[
\sigma^{u}\star\sigma^{v} = \sum_{w\in W^{P}, \lambda_{P}\in Q^{\vee}/Q^{\vee
}_{P}} N_{u,v}^{w, \lambda_{P}}q_{\lambda_{P}}\sigma^{w}%
\]
are three-pointed genus zero Gromov-Witten invariants and they are all
\textbf{non-negative}.

When $P=B$, we have $\Delta_{P}=\emptyset, Q^{\vee}_{P}=0$, $W_{P}=\{1\}$ and
$W^{P}=W$. In this case, we simply denote $\lambda=\lambda_{P}$ and
$q_{j}=q_{\alpha_{j}^{\vee}}$. It is well-known that  $N_{u, v}^{w, \lambda
}=0$ unless both of the followings hold:

\begin{enumerate}

\item $\ell(w)+\langle2\rho, \lambda\rangle=\ell(u)+\ell(v)$ (which comes from
the dimension constraint);

\item $\lambda$ is \textit{effective}, i.e. $\lambda=\sum_{j=1}^{n}
a_{j}\alpha_{j}^{\vee}$ with $a_{j}\in\mathbb{Z}_{\geq0}$ for all $j$.
\end{enumerate}

\subsection{Preliminaries}

\label{subsectreviewmainresults} In this subsection, we collect some known
propositions. As we will see in the next section, we give the quantum Pieri
rules for Grassmannians of classical types, based on the main result of
\cite{leungliQtoC} (see Proposition \ref{propmainthminLeungLi}), the
Peterson-Woodward comparison formula (see Proposition \ref{comparison}) and
the quantum Chevalley formula (see Proposition \ref{quanchevalley}).

As in \cite{leungliQtoC}, given any simple root $\alpha\in\Delta$, we define a
map $\mbox{sgn}_{\alpha}$ as follows.
\[
\mbox{sgn}_{\alpha}: W\rightarrow\{0, 1\};\,\, \mbox{sgn}_{\alpha}(w)=%
\begin{cases}
1, & \mbox{if } \ell(w)-\ell(ws_{\alpha})>0\\
0, & \mbox{if } \ell(w)-\ell(ws_{\alpha})\leq0
\end{cases}
.
\]
\noindent It is well-known that (see e.g. \cite{humr})  $\mbox{sgn}_{\alpha
}(w)=0$ if and only if $w(\alpha)\in R^{+}$.

The following proposition relates numbers of rational curves representing
``different" homology classes of $G/B$.

\begin{prop}
[Theorem 2.2 of \cite{leungliQtoC}]\label{propmainthminLeungLi}  For any $u,
v, w\in W$ and $\lambda\in Q^{\vee}$, we have

\begin{enumerate}

\item $N_{u, v}^{w, \lambda}=0$ unless {\upshape $\mbox{sgn}_{\alpha
}(w)+\langle\alpha, \lambda\rangle\leq\mbox{sgn}_{\alpha}%
(u)+\mbox{sgn}_{\alpha}(v)$} for all $\alpha\in\Delta.$

\item Suppose {\upshape $\mbox{sgn}_{\alpha}(w)+\langle\alpha, \lambda
\rangle=\mbox{sgn}_{\alpha}(u)+\mbox{sgn}_{\alpha}(v)=2$} for some $\alpha
\in\Delta$, then
 {\upshape\begin{align}
   \label{eq1}  N_{u, v}^{w, \lambda}&=N_{us_{\alpha}, vs_{\alpha}}^{w, \lambda-\alpha^{\vee}},  &{ whenever } \mbox{ sgn}_{\alpha}(w)=0 \mbox{ or } 1; \\
   \label{eq2}  N_{u, v}^{w, \lambda}&=N_{u, vs_{\alpha}}^{ws_{\alpha}, \lambda-\alpha^{\vee}}, &{ i\!f } \mbox{ sgn}_{\alpha}(w)=0;\\
   \label{eq3}  N_{u, v}^{w, \lambda}&=N_{u, vs_{\alpha}}^{ws_{\alpha}, \lambda},   &{ i\!f } \mbox{ sgn}_{\alpha}(w)=1.
\end{align}
}
\end{enumerate}
\end{prop}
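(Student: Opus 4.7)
The plan is to deduce these identities from the interplay between the quantum Chevalley formula, the WDVV associativity relations for genus-zero Gromov--Witten invariants, and the filtered algebra structure on $QH^*(G/B)$ constructed in \cite{leungli33}. In that setup, each Schubert class $\sigma^u$ should sit in a filtration level recorded by the tuple $(\mbox{sgn}_\alpha(u))_{\alpha\in\Delta}$, and compatibility of the quantum product with this filtration translates cleanly into inequalities among the $\mbox{sgn}_\alpha$-values. The idea is to read off part (1) as a statement that $\sigma^u\star\sigma^v$ lies in an appropriate filtered piece, and to extract part (2) by a more refined analysis of the associated graded pieces when the filtration bound is saturated.

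For part (1), I would argue by induction on the degree $\langle 2\rho,\lambda\rangle$. The base case $\lambda=0$ is the classical cup product on $G/B$, which respects the desired filtration by an elementary consequence of the Chevalley--Monk formula. For the inductive step, I would expand $\sigma^u\star\sigma^v$ by writing one factor as a quantum Chevalley product $\sigma^{s_\alpha}\star\sigma^{u'}$ modulo lower-filtration terms for a suitable simple root $\alpha$; associativity then reduces the problem to quantum products with divisor classes, and the quantum corrections appearing in Chevalley shift the $\mbox{sgn}_\alpha$-tuple and the coroot $\lambda$ in a controlled way that can be tracked term by term.

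For part (2), the saturation hypothesis $\mbox{sgn}_\alpha(w)+\langle\alpha,\lambda\rangle=\mbox{sgn}_\alpha(u)+\mbox{sgn}_\alpha(v)=2$ forces each of $u$, $v$, and $w$ to interact with $s_\alpha$ in a rigid way, and I would extract (\ref{eq1})--(\ref{eq3}) by selecting a suitable associativity identity of the form $(\sigma^{s_\alpha}\star\sigma^{x})\star\sigma^{y}=\sigma^{s_\alpha}\star(\sigma^{x}\star\sigma^{y})$ for well-chosen $x,y$, expanding both sides by the quantum Chevalley formula, and using part (1) to annihilate every contribution except the two sides of the claimed equality. The main obstacle I anticipate is controlling the cancellations in this associativity relation: several terms produced by Chevalley initially match the shapes of $N_{u,v}^{w,\lambda}$, $N_{us_\alpha,vs_\alpha}^{w,\lambda-\alpha^\vee}$, $N_{u,vs_\alpha}^{ws_\alpha,\lambda-\alpha^\vee}$, and $N_{u,vs_\alpha}^{ws_\alpha,\lambda}$, so isolating each of the desired identities requires pinning down precisely which Weyl group elements arise in the Chevalley expansion and verifying that the unwanted contributions either vanish by (1) or cancel pairwise. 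A secondary subtlety is the handling of quantum corrections associated to non-simple positive roots in Chevalley, which must be excluded using the saturation condition at the specific simple root $\alpha$.
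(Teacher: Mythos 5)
The paper does not actually prove this proposition: it is quoted verbatim as Theorem~2.2 of \cite{leungliQtoC}, so there is no in-paper argument to compare against, and the only internal guidance is the remark in the introduction that the result is obtained ``using structures of these filtrations'' established in \cite{leungli33}. Your sketch correctly identifies that filtered structure as the central ingredient, but the route you then propose is not the one taken in the cited reference, and, more importantly, it has gaps that you flag yourself but do not close.

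Concretely, the actual proof in \cite{leungliQtoC} passes through the $\mathbb{P}^1$-fibration $P_\alpha/B\to G/B\to G/P_\alpha$ and the graded-algebra isomorphism $\operatorname{Gr}^{F}\!\big(QH^*(G/B)\big)\cong QH^*(\mathbb{P}^1)\otimes QH^*(G/P_\alpha)$ proved in \cite{leungli33}. Part (1) is then the statement that the quantum product preserves the filtration $F_i=\operatorname{span}\{q_\lambda\sigma^w:\mbox{sgn}_\alpha(w)+\langle\alpha,\lambda\rangle\le i\}$, and part (2) is read off from the relation $x^2=q$ in the degree-$2$ piece of $QH^*(\mathbb{P}^1)=\mathbb{Q}[x,q]/(x^2-q)$ after unwinding the identification; no WDVV bookkeeping is needed. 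Your alternative plan for part (1) --- induct on $\langle 2\rho,\lambda\rangle$ and peel a factor off via quantum Chevalley --- runs into two concrete problems. First, the $q$-terms in the quantum Chevalley formula shift $\langle 2\rho,\lambda\rangle$ by $\langle 2\rho,\gamma^\vee\rangle$ for non-simple $\gamma$, so the degree does not drop by one and the induction is not set up. Second, replacing $\sigma^u$ by $\sigma^{s_\alpha}\star\sigma^{us_\alpha}$ ``modulo lower-filtration terms'' presupposes exactly the filtration compatibility you are trying to prove; without already knowing the other Chevalley terms $\sigma^{us_\alpha s_\gamma}$ lie no higher in the $\alpha$-filtration, the reduction is circular. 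For part (2) you explicitly defer the hard step (``controlling the cancellations in this associativity relation''), which is precisely what the graded-algebra isomorphism handles structurally; as written, the unwanted Chevalley contributions are not shown to vanish or cancel. So while the high-level ingredient (the \cite{leungli33} filtration) is right, the proposal is a plan rather than a proof, and the mechanism you suggest would need substantial additional work --- essentially re-deriving the content of \cite{leungli33} --- to go through.
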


\noindent  As a consequence, we obtain the next vanishing criterion for
the Gromov-Witten invariants\footnote{By ``Gromov-Witten invariants", we always
mean  ``three-pointed genus zero Gromov-Witten invariants" in this paper.}
$N_{u, v}^{w, \lambda}$.

\begin{cor}
\label{corovanish} For any $u,v,w\in W$ and $\lambda\in Q^{\vee}$, we have
$N_{u,v}^{w,\lambda}=0$ whenever there exists $\alpha\in\Delta$ such that one
of the followings holds.

\begin{enumerate}

\item $\langle\alpha, \lambda\rangle=2$ and $N_{u, vs_{\alpha}}^{ws_{\alpha},
\lambda-\alpha^{\vee}}=0$;

\item $\langle\alpha, \lambda\rangle=1$, {\upshape $\mbox{sgn}_{\alpha}(u)=0$}
and $N_{u, vs_{\alpha}}^{ws_{\alpha}, \lambda-\alpha^{\vee}}=0$;

\item $\langle\alpha,\lambda\rangle=0$, {\upshape$\mbox{sgn}_{\alpha
}(u)=\mbox{sgn}_{\alpha}(v)=0$} and $N_{u,vs_{\alpha}}^{ws_{\alpha},\lambda
}=0$.
\end{enumerate}
\end{cor}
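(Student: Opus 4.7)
The plan is to prove each case by contraposition: assuming $N_{u,v}^{w,\lambda}\neq 0$, first use Proposition~\ref{propmainthminLeungLi}(1) to pin down $\mbox{sgn}_{\alpha}(v)$ and $\mbox{sgn}_{\alpha}(w)$ in the unique way compatible with non-vanishing, then apply Proposition~\ref{propmainthminLeungLi}(2) either to the original tuple $(u,v,w,\lambda)$ or, more typically, to a cleverly shifted tuple on which both sums $\mbox{sgn}_{\alpha}(w)+\langle\alpha,\lambda\rangle$ and $\mbox{sgn}_{\alpha}(u)+\mbox{sgn}_{\alpha}(v)$ equal $2$, and read off from \eqref{eq1}, \eqref{eq2}, \eqref{eq3} an equality that collapses $N_{u,v}^{w,\lambda}$ onto the zero invariant furnished by the hypothesis.

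Case~(1) is immediate: $\langle\alpha,\lambda\rangle=2$ forces $\mbox{sgn}_{\alpha}(w)=0$ and $\mbox{sgn}_{\alpha}(u)=\mbox{sgn}_{\alpha}(v)=1$, so both sums already equal $2$ on $(u,v,w,\lambda)$ itself, and \eqref{eq2} directly gives $N_{u,v}^{w,\lambda}=N_{u,vs_{\alpha}}^{ws_{\alpha},\lambda-\alpha^{\vee}}=0$. For Case~(3), Proposition~\ref{propmainthminLeungLi}(1) only forces $\mbox{sgn}_{\alpha}(w)=0$, so I would pass to the shifted tuple $(us_{\alpha},vs_{\alpha},w,\lambda+\alpha^{\vee})$, on which both sums equal $2$. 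Equation \eqref{eq1} identifies its invariant with $N_{u,v}^{w,\lambda}$, while \eqref{eq2} applied to the $u\leftrightarrow v$-swapped tuple $(vs_{\alpha},us_{\alpha},w,\lambda+\alpha^{\vee})$, combined with the permutation symmetry of the three-pointed Gromov-Witten invariants, identifies it also with the hypothesized $N_{u,vs_{\alpha}}^{ws_{\alpha},\lambda}$, closing the case.

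Case~(2) is the main obstacle. After Proposition~\ref{propmainthminLeungLi}(1) narrows us to $\mbox{sgn}_{\alpha}(w)=0$ and $\mbox{sgn}_{\alpha}(v)=1$, I would take the shifted tuple $(us_{\alpha},v,ws_{\alpha},\lambda)$, on which both sums equal $2$. Equation \eqref{eq1} on this tuple gives $N_{us_{\alpha},v}^{ws_{\alpha},\lambda}=N_{u,vs_{\alpha}}^{ws_{\alpha},\lambda-\alpha^{\vee}}=0$. To close the loop back to $N_{u,v}^{w,\lambda}$ itself, the key move is to read the same underlying invariant via the $u\leftrightarrow v$-swapped tuple $(v,us_{\alpha},ws_{\alpha},\lambda)$: since the third slot there has $\mbox{sgn}_{\alpha}$ equal to $1$, \eqref{eq3} yields $N_{v,us_{\alpha}}^{ws_{\alpha},\lambda}=N_{v,u}^{w,\lambda}$, and the symmetry of the Gromov-Witten invariants in their first two arguments then completes the chain $N_{u,v}^{w,\lambda}=N_{v,u}^{w,\lambda}=N_{us_{\alpha},v}^{ws_{\alpha},\lambda}=0$. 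The recurring subtlety is that Proposition~\ref{propmainthminLeungLi}(2) only fires when both sums equal $2$; Cases (2) and (3) therefore require us to identify just the right shifted tuple at which this extremality is attained.
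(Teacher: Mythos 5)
Your proof is correct and takes essentially the same route as the paper's: both first use Proposition~\ref{propmainthminLeungLi}(1) to force the unique non-vanishing configuration of $\mbox{sgn}_{\alpha}(w)$, $\mbox{sgn}_{\alpha}(v)$, and then apply the identities of Proposition~\ref{propmainthminLeungLi}(2) at a shifted tuple together with the symmetry $N_{u,v}^{w,\lambda}=N_{v,u}^{w,\lambda}$ to transport the hypothesized vanishing back to $N_{u,v}^{w,\lambda}$. The only cosmetic difference is that for case (2) the paper applies Proposition~\ref{propmainthminLeungLi}(2) at $(u',v',w')=(v,us_{\alpha},ws_{\alpha})$ while you split the same computation across the pair $(us_{\alpha},v,ws_{\alpha})$ and its swap, which is the same tuple up to the $u\leftrightarrow v$ symmetry.
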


\begin{proof}
Note that $\mbox{sgn}_{\alpha}$ is a map frow $W$ to $\{0, 1\}$. Thus if any
one of the above three assumptions holds, we have  $\mbox{sgn}_{\alpha
}(w)+\langle\alpha, \lambda\rangle\geq\mbox{sgn}_{\alpha}%
(u)+\mbox{sgn}_{\alpha}(v)$.

When the inequality ``$>$" holds, we are already done  by using Proposition
\ref{propmainthminLeungLi} (1).  Now we assume the equality ``$=$" holds.  As
a consequence, if assumption (2) holds,  then we have $\mbox{sgn}_{\alpha
}(w)=0$ and $\mbox{sgn}_{\alpha}(v)=1$. Applying Proposition
\ref{propmainthminLeungLi} (2)  for $u^{\prime}=v, v^{\prime}=us_{\alpha}$ and
$w^{\prime}=ws_{\alpha}$,  we deduce that $N_{u, v}^{w, \lambda}=N_{v, u}^{w,
\lambda}=N_{u^{\prime}, v^{\prime}s_{\alpha}}^{w^{\prime}s_{\alpha}, \lambda
}=N_{u^{\prime}s_{\alpha}, v^{\prime}s_{\alpha}}^{w^{\prime}, \lambda
-\alpha^{\vee}}  =N_{vs_{\alpha}, u}^{ws_{\alpha}, \lambda-\alpha^{\vee}}
=0$.  Similarly, we can  show $N_{u, v}^{w, \lambda}= 0$ whenever either
assumption (1) or assumption (3) holds.
\end{proof}
We will use Proposition \ref{propmainthminLeungLi} and Corollary \ref{corovanish}
   very frequently in section 3. Whenever necessary, we will point out what we are applying explicitly, by using the words ``Applying (\textit{reference}) to $(u', v', w', \lambda', \alpha')$".

The next identity for certain classical intersection numbers is also a direct consequence of Proposition \ref{propmainthminLeungLi}.
\begin{cor}
\label{coridentity} Let $u,v,w\in W$ and  $\alpha\in\Delta$. Suppose  {\upshape  $\mbox{sgn}_{\alpha}(w)=\mbox{sgn}_{\alpha}(u)+1=1$},
 then $N_{u, v}^{w, 0}$ is equal to $N_{u, vs_\alpha}^{ws_\alpha, 0}$ if {\upshape $\mbox{sgn}_{\alpha}(v)=1$}, or $0$ otherwise.
\end{cor}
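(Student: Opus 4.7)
The plan is to split into two subcases according to $\mbox{sgn}_{\alpha}(v)$ and dispatch each via Proposition \ref{propmainthminLeungLi}. For the first subcase $\mbox{sgn}_{\alpha}(v)=0$, I would invoke part (1) with $\lambda=0$: the inequality $\mbox{sgn}_{\alpha}(w)+\langle\alpha,0\rangle\le\mbox{sgn}_{\alpha}(u)+\mbox{sgn}_{\alpha}(v)$ becomes $1\le 0$ and fails, forcing $N_{u,v}^{w,0}=0$ as claimed.

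For the second subcase $\mbox{sgn}_{\alpha}(v)=1$, a direct application of part (2) at $\lambda=0$ is blocked, since the hypothesis would demand $\mbox{sgn}_{\alpha}(w)+\langle\alpha,0\rangle=2$ and hence $\mbox{sgn}_{\alpha}(w)=2$. My plan is instead to bootstrap through an auxiliary quantum degree: apply part (2) to the tuple $(u',v',w',\lambda')=(v,\,us_{\alpha},\,ws_{\alpha},\,\alpha^{\vee})$ with the same simple root $\alpha$. One checks $\mbox{sgn}_{\alpha}(u')=1$, $\mbox{sgn}_{\alpha}(v')=\mbox{sgn}_{\alpha}(us_{\alpha})=1$ (since $\mbox{sgn}_{\alpha}(u)=0$), $\mbox{sgn}_{\alpha}(w')=\mbox{sgn}_{\alpha}(ws_{\alpha})=0$ (since $\mbox{sgn}_{\alpha}(w)=1$), and $\langle\alpha,\alpha^{\vee}\rangle=2$, so both sides of the hypothesis of Proposition \ref{propmainthminLeungLi}(2) equal $2$. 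Equations (\ref{eq1}) and (\ref{eq2}) then rewrite $N_{v,us_{\alpha}}^{ws_{\alpha},\alpha^{\vee}}$ in two classical ways, producing
\[
N_{vs_{\alpha},u}^{ws_{\alpha},0}=N_{v,us_{\alpha}}^{ws_{\alpha},\alpha^{\vee}}=N_{v,u}^{w,0},
\]
and the symmetry $N_{a,b}^{c,d}=N_{b,a}^{c,d}$ in the first two arguments converts this to the desired $N_{u,v}^{w,0}=N_{u,vs_{\alpha}}^{ws_{\alpha},0}$.

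The only non-obvious step, and the main obstacle, is identifying this auxiliary tuple: the target classical identity falls outside the literal scope of Proposition \ref{propmainthminLeungLi}(2), and passing through a degree-$\alpha^{\vee}$ quantum invariant together with a swap of the first two arguments is precisely what bridges the gap.
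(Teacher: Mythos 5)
Your proof is correct and takes essentially the same route as the paper: the $\mbox{sgn}_{\alpha}(v)=0$ case by Proposition \ref{propmainthminLeungLi}(1), and the $\mbox{sgn}_{\alpha}(v)=1$ case by applying Proposition \ref{propmainthminLeungLi}(2) to the auxiliary degree-$\alpha^{\vee}$ invariant $N_{v,us_{\alpha}}^{ws_{\alpha},\alpha^{\vee}}$, which equals both $N_{vs_{\alpha},u}^{ws_{\alpha},0}$ and $N_{v,u}^{w,0}$. This is exactly the chain of equalities the paper uses.
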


\begin{proof}
   If  $\mbox{sgn}_{\alpha}(v)=0$, then $N_{u, v}^{w, 0}=0$ follows directly from Proposition \ref{propmainthminLeungLi} (1).
    If  $\mbox{sgn}_{\alpha}(v)=1$, then we have
       $N_{v, us_\alpha}^{ws_\alpha, \alpha^\vee}=N_{vs_\alpha, u}^{ws_\alpha, 0}=N_{v, u}^{w, 0}$, by Proposition \ref{propmainthminLeungLi} (2).
\end{proof}

\bigskip

The next comparison formula tells us that every Gromov-Witten invariant
($N_{u, v}^{w, \lambda_{P}}$) for $G/P$ equals  a certain Gromov-Witten
invariant ($N_{u, v}^{w\omega_{P}\omega_{P^{\prime}}, \lambda_{B}}$) for
$G/B$.

\begin{prop}
[Peterson-Woodward comparison formula \cite{wo}; see also \cite{lamshi}%
]\label{comparison}  ${}$

\begin{enumerate}

\item Let $\lambda_{P}\in Q^{\vee}/Q_{P}^{\vee}$. Then there is a unique
$\lambda_{B}\in Q^{\vee}$ such that $\lambda_{P}=\lambda_{B}+Q_{P}^{\vee}$
and  $\langle\gamma, \lambda_{B}\rangle\in\{0, -1\}$ for all $\gamma\in
R^{+}_{P} \,\,(=R^{+}\cap\bigoplus_{\beta\in\Delta_{P}}\mathbb{Z}\beta)$.

\item Denote $\Delta_{P^{\prime}}:=\{\beta\in\Delta_{P}~|~\langle\beta,
\lambda_{B}\rangle=0\}$. For every $u, v, w\in W^{P}$, we have
\[
N_{u,v}^{w, \lambda_{P} }=N_{u, v}^{w\omega_{P}\omega_{P^{\prime}},
\lambda_{B}}.
\]
Here  $\omega_{P}$ (resp. $\omega_{P'}$) is the longest element
in the Weyl subgroup $W_P$ (resp. $W_{P'}$).
\end{enumerate}
\end{prop}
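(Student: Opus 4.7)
The plan is to prove the two parts essentially independently. Part (1) is a purely combinatorial statement about a canonical lift in $Q^\vee$ of a class in $Q^\vee/Q_P^\vee$, whereas Part (2) is the substantive identification of a Gromov-Witten invariant for $G/P$ with one for $G/B$, and this is where the actual work lies.

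For Part (1), I would establish existence via a reduction algorithm. Starting from any representative $\mu\in Q^\vee$ of $\lambda_P$, if some $\gamma\in R^+_P$ satisfies $\langle\gamma,\mu\rangle\notin\{0,-1\}$, one translates $\mu$ by a suitable element of $Q_P^\vee$ so as to strictly decrease a bounded-below nonnegative integer potential measuring the total excess of the pairings $\langle\gamma,\mu\rangle$ outside $\{0,-1\}$ for $\gamma\in R^+_P$; termination follows since $R^+_P$ is finite. For uniqueness, if $\lambda_B,\lambda_B'$ both satisfy the condition, then $\nu=\lambda_B-\lambda_B'\in Q_P^\vee$ pairs into $\{-1,0,1\}$ with every $\gamma\in R^+_P$; pairing $\nu$ with the highest root of the subsystem $R_P$ and inducting on the rank of $\Delta_P$ should force $\nu=0$. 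Equivalently, one can observe directly that the region cut out by these pairing conditions forms a fundamental domain for the $Q_P^\vee$-translation action on $\mathfrak{h}$.

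For Part (2), I would analyze the natural projection $\pi:G/B\to G/P$, whose fibers are isomorphic to the Levi flag variety $P/B$. Given a curve class $\lambda_P$ on $G/P$, its lifts to $G/B$ populate the coset $\lambda_P+Q_P^\vee$, and the content of Part (1) is that among these lifts exactly one, namely $\lambda_B$, has the minimal pairing profile against $R^+_P$. A careful comparison of the moduli of stable maps on the two sides, relating $\pi_*$-pushed stable maps on $G/B$ to stable maps on $G/P$, should identify which lift actually contributes: the minimality built into $\lambda_B$ ensures that contributions from other lifts factor through lower-dimensional strata and vanish. The shift $w\mapsto w\omega_P\omega_{P'}$ then arises because imposing the Schubert condition $\sigma^w$ on $G/P$ translates, after accounting for the Levi-factor constraint that $\lambda_B$ imposes on the fiber direction, into imposing $\sigma^{w\omega_P\omega_{P'}}$ on $G/B$, where $\Delta_{P'}$ is precisely the set of simple roots in $\Delta_P$ along which $\lambda_B$ has trivial pairing and therefore contributes a full flag of freedom in the fiber.

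The main obstacle will be the geometric content of Part (2), namely rigorously justifying that exactly the $\lambda_B$-lifts contribute and that the Schubert translation is precisely $w\omega_P\omega_{P'}$ rather than some other coset representative. I do not see a fully elementary proof; a rigorous treatment appears to require either Woodward's symplectic-reduction framework or the Peterson/affine-Grassmannian machinery of Lam and Shimozono, both of which give the comparison essentially by matching two different presentations of $QH^*(G/P)$ rather than by direct curve-counting on $G/B$.
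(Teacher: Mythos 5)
This proposition is \emph{not proved in the paper}: it is quoted as a known black box with citations to Woodward \cite{wo} and Lam--Shimozono \cite{lamshi}, and the paper simply applies it (together with Lemma~\ref{charalongest} to identify $\omega_P\omega_{P'}$ in concrete cases). So there is no internal proof for you to be compared against, and in fact the intended usage here is to cite the result rather than reprove it.

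On the merits of your sketch taken on its own terms: your Part~(1) outline is essentially sound. The ``fundamental domain'' observation is the right conceptual framing, though the uniqueness step needs more care than ``pair with the highest root and induct on rank'' suggests --- the clean argument is that if $\nu\in Q_P^\vee$ pairs into $\{-1,0,1\}$ against every $\gamma\in R_P^+$, one shows $\nu=0$ by first noting that $\langle\alpha,\nu\rangle\in\{-1,0,1\}$ for all $\alpha\in\Delta_P$ already pins down $\nu$ up to a small finite set, and then checking these against the longer roots. Part~(2), however, contains a genuine gap that you yourself flag at the end: the assertion that ``the minimality built into $\lambda_B$ ensures that contributions from other lifts factor through lower-dimensional strata and vanish'' is not a proof, and as far as I know no one has given a direct stable-map comparison over $\pi:G/B\to G/P$ of this kind. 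The fibers $P/B$ are positive-dimensional, so the relation between $\pi_*$ of a stable map to $G/B$ and a stable map to $G/P$ is not a clean fibration of moduli spaces, and there is no obvious excess/virtual-class argument singling out the lift $\lambda_B$. The actual proofs are algebraic: Woodward matches Peterson's presentations of $QH^*(G/B)$ and $QH^*(G/P)$, and Lam--Shimozono identify both sides inside the homology of the affine Grassmannian. The exact shift $w\mapsto w\omega_P\omega_{P'}$ and the exact lift $\lambda_B$ come out of those presentations, not out of a curve-count. Since the paper itself takes the proposition as given, the appropriate move is to cite it rather than attempt a from-scratch argument; if you want to understand why it is true, work through \cite{lamshi}.
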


Thanks to the above comparison formula, we obtain an injection of vector
spaces
\[
\psi_{\Delta, \Delta_{P}}: QH^{*}(G/P)\longrightarrow QH^{*}(G/B)
\,\,\mbox{defined by } q_{\lambda_{P}}\sigma^{w}\mapsto q_{\lambda_{B}}%
\sigma^{w\omega_{P}\omega_{P^{\prime}}}.
\]
We denote by $P_{\alpha}$ the parabolic subgroup (containing $B$)  that
corresponds to the special case of a singleton subset $\{\alpha\}\subset
\Delta$, and simply denote $\psi_{\alpha}=\psi_{\Delta, \{\alpha\}}$. Note
that $R_{P_{\alpha}}^{+}=\{\alpha\}$ and $Q_{P_{\alpha}}^{\vee}=\mathbb{Z}%
\alpha^{\vee}$. In addition, we have the natural fibration  $P_{\alpha
}/B\rightarrow G/B\rightarrow G/P_{\alpha}\mbox{ with } P_{\alpha}%
/B\cong\mathbb{P}^{1}.$

The (Peterson's) quantum Chevalley formula, proved in \cite{fw}, describes the
quantum product of two Schubert classes when one of them is given by a simple reflection.

\begin{prop}
[Quantum Chevalley formula for $G/B$ ]\label{quanchevalley}  For $u\in W$,
$1\leq i\leq n$,
\[
\sigma^{u}\star\sigma^{s_{i}} =\sum\langle\chi_{i}, \gamma^{\vee}\rangle
\sigma^{us_{\gamma}}+\sum\langle\chi_{i}, \gamma^{\vee}\rangle q_{\gamma
^{\vee}}\sigma^{us_{\gamma}},
\]
where the first sum is over positive roots $\gamma$ for which $\ell
(us_{\gamma})=\ell(u)+1$, and the second sum is over positive roots  $\gamma$
for which $\ell(us_{\gamma})=\ell(u)+1-\langle2\rho, \gamma^{\vee}\rangle$.
\end{prop}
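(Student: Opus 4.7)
The plan is to separate the classical ($\lambda = 0$) and quantum ($\lambda \neq 0$) contributions to $\sigma^u \star \sigma^{s_i}$. The classical piece coincides with the ordinary cup product $\sigma^u \cup \sigma^{s_i}$, and the classical Chevalley formula delivers exactly the first sum $\sum \langle \chi_i, \gamma^{\vee}\rangle \sigma^{us_{\gamma}}$ indexed by positive roots with $\ell(us_\gamma) = \ell(u)+1$. The main task is therefore to identify the non-vanishing Gromov-Witten invariants $N_{u, s_i}^{w, \lambda}$ for effective $\lambda \neq 0$ and to compute them.

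For the quantum contributions, the first step is to apply Proposition \ref{propmainthminLeungLi}(1) to constrain $\lambda$. A direct check gives $\mbox{sgn}_\alpha(s_i) = 1$ if $\alpha = \alpha_i$ and $0$ otherwise, so the proposition forces $\langle \alpha, \lambda\rangle \leq 1$ for every $\alpha \neq \alpha_i$ and $\langle \alpha_i, \lambda\rangle \leq 2$. Combined with effectivity of $\lambda$ and the dimension equality $\langle 2\rho, \lambda\rangle = \ell(u) + 1 - \ell(w) \geq 0$, these inequalities cut the candidates for $\lambda$ to a short finite list of positive coroot sums. The central geometric step, which I would carry out following Fulton--Woodward \cite{fw}, is to show that only $\lambda = \gamma^\vee$ for a single positive root $\gamma$ can contribute: analyzing the evaluation map from the moduli space $\overline{M}_{0,3}(G/B, \lambda)$, one shows that a stable map meeting three Schubert cycles $X_u$, $X_{s_i}$, $X_w$ in general position must factor through an $SL_2$-orbit in $G/B$, which forces $\lambda$ to be a single coroot.

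Once $\lambda = \gamma^\vee$ is in hand, the multiplicity can be pinned down using the fibration $P_\gamma / B \cong \mathbb{P}^1$: by Proposition \ref{comparison} the invariant $N_{u, s_i}^{w, \gamma^\vee}$ can be rewritten as (or at least compared to) a classical intersection number on a flag variety, and the classical Chevalley calculus on the $\mathbb{P}^1$-fiber then yields both the coefficient $\langle \chi_i, \gamma^\vee\rangle$ and the length condition $\ell(us_\gamma) = \ell(u) + 1 - \langle 2\rho, \gamma^\vee\rangle$ that selects which $\gamma$'s actually contribute. The main obstacle I anticipate is precisely the reduction to a single-coroot degree: the linear inequalities extracted from Proposition \ref{propmainthminLeungLi} do not by themselves exclude sums of two or more positive coroots, and ruling these out requires the delicate moduli-theoretic analysis of rational curves in $G/B$ that forms the technical heart of \cite{fw}; an alternative, more algebraic route would be to attempt an induction on $\ell(u)$ using the identities \eqref{eq1}--\eqref{eq3}, but this seems to require already knowing the formula in low-rank base cases.
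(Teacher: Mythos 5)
The paper does not prove this proposition: it is stated as a known result, with the proof attributed to Fulton and Woodward \cite{fw} (``The (Peterson's) quantum Chevalley formula, proved in \cite{fw}\dots''). There is therefore no proof in the paper to compare your proposal against; the statement is a cited input to the paper's arguments, not a result it establishes.

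Evaluating your sketch on its own terms: it correctly separates the classical cup-product contribution from the quantum corrections, and you rightly identify that the crux is forcing the degree to be a single positive coroot $\lambda=\gamma^\vee$ --- and that the numerical constraints alone do not achieve this, so the moduli-theoretic heart of \cite{fw} is unavoidable. However, there is a logical circularity in the route you propose: Proposition~\ref{propmainthminLeungLi}, which you would use to bound $\lambda$, is Theorem~2.2 of \cite{leungliQtoC}, and both that paper and the filtered-algebra structures of \cite{leungli33} that underlie it take the quantum Chevalley formula (and the Peterson--Woodward comparison) as established inputs. Invoking that machinery to re-derive quantum Chevalley would run in a circle. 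Likewise, Proposition~\ref{comparison} relates $G/P$-invariants to $G/B$-invariants; it does not by itself convert a degree-one three-point invariant for $G/B$ into a classical intersection number, so the final multiplicity computation still rests on the Fulton--Woodward analysis rather than on the tools assembled in Section~2 of this paper. Your proposal is a reasonable high-level summary of where the real work in \cite{fw} lies, but it is not a proof and, as given, it relies on results downstream of the very statement being proved.
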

\noindent  When $\Delta$ is of $A$-type, the above formula  is also called the quantum Monk formula.

In addition, we will need the next two  lemmas.

\begin{lemma}[see Lemma 3.9 of
\cite{leungli33}]
\label{lengthofpos222}  Let $v\in W$ and $\gamma\in R^{+}$ satisfy
$\ell(vs_{\gamma})=\ell(v)+1-\langle2\rho, \gamma^{\vee}\rangle$.  Then for
any $\alpha_{j}\in\Delta$ with $\langle\alpha_{j}, \gamma^{\vee}\rangle>0$, we
have  $\ell(vs_{\gamma}s_{j})=\ell(vs_{\gamma})+1.$
\end{lemma}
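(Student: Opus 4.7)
The plan is to reduce the conclusion $\ell(vs_\gamma s_j)=\ell(vs_\gamma)+1$, via the standard equivalence $\ell(ws_j)>\ell(w)\Leftrightarrow w(\alpha_j)\in R^{+}$, to showing $vs_\gamma(\alpha_j)\in R^{+}$. The length hypothesis will enter through an inversion-set inclusion, and the sign hypothesis $\langle\alpha_j,\gamma^\vee\rangle>0$ will furnish an explicit positive root lying in that inversion set.

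First, using the well-known identity $\ell(s_\gamma)=\langle2\rho,\gamma^\vee\rangle-1$, the assumption $\ell(vs_\gamma)=\ell(v)+1-\langle2\rho,\gamma^\vee\rangle$ becomes $\ell(v)=\ell(vs_\gamma)+\ell(s_\gamma)$, so the decomposition $v=(vs_\gamma)\cdot s_\gamma$ is length-additive. Writing $\mathrm{Inv}(w):=\{\alpha\in R^{+}:w(\alpha)\in R^{-}\}$, the standard inversion-set formula for reduced products yields
\[
\mathrm{Inv}(v)\;=\;\mathrm{Inv}(s_\gamma)\,\sqcup\,s_\gamma\bigl(\mathrm{Inv}(vs_\gamma)\bigr),
\]
in particular $\mathrm{Inv}(s_\gamma)\subseteq\mathrm{Inv}(v)$.

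Next I would record the purely combinatorial fact that $s_\gamma(\alpha_j)\in R^{-}$ whenever $\langle\alpha_j,\gamma^\vee\rangle>0$. Writing $\gamma=\sum_{i}n_i\alpha_i$ with $n_i\geq 0$ and $k:=\langle\alpha_j,\gamma^\vee\rangle\geq 1$, we have $s_\gamma(\alpha_j)=\alpha_j-k\gamma$. If $\gamma=\alpha_j$ this equals $-\alpha_j$; otherwise some $n_i>0$ with $i\neq j$, so the $\alpha_i$-coefficient of $s_\gamma(\alpha_j)$ equals $-kn_i<0$, and since $s_\gamma(\alpha_j)$ is a root its simple-root coefficients all share a sign, forcing $s_\gamma(\alpha_j)\in R^{-}$.

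Finally, set $\beta:=-s_\gamma(\alpha_j)=k\gamma-\alpha_j\in R^{+}$. Since $s_\gamma$ is an involution, $s_\gamma(\beta)=-\alpha_j\in R^{-}$, so $\beta\in\mathrm{Inv}(s_\gamma)\subseteq\mathrm{Inv}(v)$ by the first step. Hence $v(\beta)\in R^{-}$ and
\[
vs_\gamma(\alpha_j)\;=\;v\bigl(s_\gamma(\alpha_j)\bigr)\;=\;v(-\beta)\;=\;-v(\beta)\in R^{+},
\]
which is the required statement. The main obstacle is the combinatorial claim $s_\gamma(\alpha_j)\in R^{-}$; the rest of the argument is a short chain of inversion-set identifications.
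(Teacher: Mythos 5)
Your argument is correct and is essentially the standard inversion-set proof of this fact; the paper does not prove the lemma here but simply cites Lemma~3.9 of \cite{leungli33}, so there is nothing to compare against directly. The chain $vs_\gamma(\alpha_j)=-v(\beta)\in R^+$ with $\beta=-s_\gamma(\alpha_j)\in\mathrm{Inv}(s_\gamma)\subseteq\mathrm{Inv}(v)$ is exactly the right observation, and your verification that $s_\gamma(\alpha_j)\in R^-$ when $\langle\alpha_j,\gamma^\vee\rangle>0$ is complete.

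One small imprecision worth flagging: $\ell(s_\gamma)=\langle2\rho,\gamma^\vee\rangle-1$ is \emph{not} a general identity. In type $B_2$ with $\gamma=e_1$ one has $\ell(s_\gamma)=3$ while $\langle2\rho,\gamma^\vee\rangle-1=5$. What is true in general is only the inequality $\ell(s_\gamma)\le\langle2\rho,\gamma^\vee\rangle-1$. However, your conclusion $\ell(v)=\ell(vs_\gamma)+\ell(s_\gamma)$ still follows: the hypothesis gives $\ell(v)-\ell(vs_\gamma)=\langle2\rho,\gamma^\vee\rangle-1$, while subadditivity of length gives $\ell(v)-\ell(vs_\gamma)\le\ell(s_\gamma)$; combining with the standard inequality forces both $\ell(s_\gamma)=\langle2\rho,\gamma^\vee\rangle-1$ and the length-additivity of $v=(vs_\gamma)\cdot s_\gamma$. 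So the hypothesis itself forces the ``identity'' to hold for the particular $\gamma$ in question, and the rest of your argument goes through unchanged. You should rephrase this step so that it invokes the inequality together with the hypothesis rather than treating the equality as a known fact.
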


\begin{lemma}\label{lemmasgnV=0}
   For  $v\in W^P$ and   $\alpha\in \Delta_P$, we have {\upshape $\mbox{sgn}_\alpha(v)=0$}.
\end{lemma}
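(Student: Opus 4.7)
The plan is to unpack the definition of $\mathrm{sgn}_\alpha$ and use the standard characterization of minimal length coset representatives. Recall from the excerpt that $\mathrm{sgn}_\alpha(v) = 0$ is equivalent to $\ell(v) \le \ell(vs_\alpha)$, which in turn is equivalent to $v(\alpha) \in R^+$. Thus the lemma reduces to the claim that every $v \in W^P$ sends each simple root $\alpha \in \Delta_P$ into $R^+$.

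To establish this, I would argue by contradiction. Suppose $v \in W^P$ and $\alpha \in \Delta_P$ but $v(\alpha) \in R^-$. By the exchange/length property for simple reflections, $v(\alpha) \in R^-$ implies $\ell(vs_\alpha) = \ell(v) - 1$. Now $s_\alpha \in W_P$ since $\alpha \in \Delta_P$ (which by definition generates $W_P$), so $vs_\alpha$ lies in the same left coset $vW_P$ as $v$. This contradicts the hypothesis that $v$ is the unique minimal length representative of its coset in $W^P$. Hence $v(\alpha) \in R^+$ and therefore $\mathrm{sgn}_\alpha(v) = 0$.

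There is essentially no obstacle here: the lemma is a direct consequence of two very standard facts, namely the characterization $v \in W^P \iff v(\alpha) \in R^+$ for all $\alpha \in \Delta_P$ (which is sometimes taken as an equivalent definition of $W^P$), and the equivalence between the sign of $\ell(v) - \ell(vs_\alpha)$ and the sign of $v(\alpha)$. The only mild subtlety is making sure to invoke the correct side convention for cosets (left versus right $W_P$-cosets), but this is fixed by the paper's convention that $W^P$ represents $W/W_P$, so that multiplication on the right by elements of $W_P$ preserves the coset. The whole argument should take only a few lines.
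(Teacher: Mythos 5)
Your proof is correct and follows the same route as the paper: reduce $\mathrm{sgn}_\alpha(v)=0$ to $v(\alpha)\in R^+$ and then invoke the characterization of $W^P$. The only difference is that you supply the short contradiction argument for the fact that $v\in W^P$, $\alpha\in\Delta_P$ imply $v(\alpha)\in R^+$, whereas the paper simply cites it as well known.
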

\begin{proof}
   It is a well known fact that $v\in W^P$ and   $\alpha\in \Delta_P$ imply $v(\alpha)\in R^+$. Thus, $\ell(vs_\alpha)>\ell(v)$, and then the statement follows.
\end{proof}
\section{Quantum Pieri rules for Grassmannians of classical types: classical aspects}

In this section, we study the quantum Pieri rules for Grassmannians of
classical types, which describe the quantum product of general Schubert
classes with  the Chern classes of the dual of the tautological subbundles.
We will  only deal with Grassmannians
of type $B, C, D$ here, and  will reprove the
 well-known quantum Pieri rules for Grassmannians of type $A$ (i.e. complex
Grassmannians) \cite{bert} in the appendix.  Furthermore, we will only reduce all the relevant Gromov-Witten invariants in the quantum Pieri rules to certain classical intersection numbers.
As we will see in next section, further reductions can be taken so that these rules can be reformulated in a traditional way.

\subsection{Grassmannians of type $B, C, D$}\label{subsectisotroGrassm} In this subsection, we review some facts on
Grassmanians of type $C,B,D$, i.e. the quotients of Lie groups $G$ of the
aforementioned types by their maximal parabolic subgroups $P$. More details on
these facts can be found for example in \cite{tamvakis} and \cite{BKT2}. We
also illustrate the idea of our proof of quantum Pieri rules.

Every such  Grassmannian $X$ parameterizes isotropic subspaces in a vector
space $E=\mathbb{C}^{N}$ equipped with a standard non-degenerated bilinear
form $(\cdot, \cdot )$ which is  skew-symmetric in the $C$
case and symmetric in the $B$ or $D$ cases. Thus it is usually called an \textit{isotropic Grassamannian} and it
can be described explicitly as follows. The maximal parabolic subgroup $P$
corresponds to a subset $\Delta_{P}=\Delta\setminus\{\alpha_{k}\}$ (we use the
convention of labelling the base as in Humphreys' book \cite{hum}) and the
space $X$ is given by

\begin{enumerate}

\item[(i)] $IG(k, 2n) =\{V\leqslant\mathbb{C}^{2n}~|~ \dim_{\mathbb{C}}V=k,\,
 (V, V)=0\} \mbox{ for type } C_{n}$;

\item[(ii)] $OG(k, 2n+1) =\{V\leqslant\mathbb{C}^{2n+1}~|~ \dim_{\mathbb%
{C}}V=k,\, (V, V)=0\} \mbox{ for type } B_{n}$;

\item[(iii)] $OG(k, 2n+2) =\{V\leqslant\mathbb{C}^{2n+2}~|~ \dim_{\mathbb%
{C}}V=k,\, (V, V)=0\}$, if $G$ is of type $D_{n+1}$ and $1\leq k\leq n-1$;

\item[(iv)] a connected component $OG^{o}(n+1, 2n+2)$ of $OG(n+1, 2n+2)$, if
$G$ is of type $D_{n+1}$ and $k\in\{n, n+1\}$.
\end{enumerate}

In the first three cases, we have $N=2n$, $2n+1$ and $2n+2$ respectively. For
convenience, we have assumed $G$ to be of type $D_{n+1}$, rather than $D_{n}$,
in case (iii) and (iv).  Furthermore when this holds, we can always assume
$k\leq n-1$, since case (iv) can be reduced to case (ii) (see Remark
\ref{remarkBCD}). Customarily, $IG(n, 2n)$ (resp. $OG(n, 2n+1)$, $OG^{o}(n+1,
2n+2)$) is called a \textit{Lagrangian} (resp. \textit{odd orthogonal},
\textit{even orthogonal}) \textit{Grassmannian}.

There are tautological bundles over the isotropic Grassmannian $G/P$:
\[
0\rightarrow\mathcal{S}\rightarrow E\rightarrow\mathcal{Q}\rightarrow0.
\]
The Chern classes of the dual of the tautological subbundle $\mathcal{S}$ are
given by the Schubert classes  $c_{p}(\mathcal{S}^{*})=\sigma^{u}$   (where
$1\leq p\leq k$) with
\[
u:={s_{k-p+1}\cdots s_{k-1}s_{k}},
\]
possibly up to a scale factor of $2$.
Precisely, for case (i), (ii) and (iii), we always have
 $c_{p}(\mathcal{S}^{*})=\sigma^{u}$, except for the special case of $k=n$ for case (ii). Furthermore  for this exceptional case, we  have
 $c_{p}(\mathcal{S}^{*})=2\sigma^{u}$  (see e.g. \cite{pragrat}, \cite{pragratEvenorth}).   Note that case (iv) has been reduced to the exceptional case.
  In the next two subsections, we will show the classical aspects of  the quantum Pieri
rules with respect to $c_{p}(\mathcal{S}^{*})$ (or equivalently $c_{p}(\mathcal{S})=(-1)^pc_{p}(\mathcal{S}^*)$). That is, we give a formula for
the quantum product $\sigma^{u}\star\sigma^{v}$ of a general Schubert class
$\sigma^{v}$ in $QH^{*}(G/P)$ with such a special Schubert class  $\sigma^{u}$, in which we reduce all the Gromov-Witten invariants to classical intersection numbers.

Note that the quantum Pieri rules with respect to $c_{p}(\mathcal{Q})$ have
been given by Buch, Kresch and Tamvakis \cite{BKT2}. In contrast to complex
Grassmannians, (quantum) Pieri rules with respect to $c_{p}(\mathcal{Q})$ do
not imply  (quantum) Pieri rules with respect to $c_{p}(\mathcal{S}^{*})$,
whenever $1< k<n$ (see the next remark and note that $c_1(\mathcal{S}^*)=c_1(\mathcal{Q}$)).

\begin{remark}
[See e.g. \cite{tamvakis} and \cite{BKT2}]\label{remarkBCD}${}$

\begin{enumerate}

\item $OG(n+1, 2n+2)$ has two isomorphic connected components, either of which
is projectively isomorphic to $OG(n, 2n+1)$.

\item $OG(n, 2n+2)$ is a flag variety $G/\bar P$ of $D_{n+1}$-type with
$\Delta_{\bar P}=\Delta\setminus\{\alpha_{n}, \alpha_{n+1}\}$.

\item For any Lagrangian or orthogonal Grassmannian (i.e. for $k=n$), we have
$c_{p}(\mathcal{S}^{*})=c_{p}(\mathcal{Q})$ whenever {\upshape $p\leq
\mbox{rank}(\mathcal{S})$}.
\end{enumerate}
\end{remark}

The idea of our proof of such a formula is as follows. For the isotropic
Grassmannian $G/P$, we have $H_{2}(G/P, \mathbb{Z})=Q^{\vee}/Q_{P}^{\vee}%
\cong\mathbb{Z}$ and consequently  $QH^{*}(G/P)$ contains only one quantum
variable $t:=q_{\alpha_{k}^{\vee}+Q_{P}^{\vee}}$. Thus we can write
\[
\sigma^{u}\star\sigma^{v}=\sigma^{u}\cup\sigma^{v}+\sum_{w\in W^{P}, d\geq
1}N_{u, v}^{w, d}\sigma^{w} t^{d}.
\]
Here we have $N_{u, v}^{w, d}=N_{u, v}^{w, \lambda_{P}}$ where $\lambda
_{P}:=d\alpha_{k}^{\vee}+Q_{P}^{\vee}$, compared with the previous notations.
In addition, we have $N_{u, v}^{w, \lambda_{P}}=N_{u, v}^{\tilde w,
\lambda_{B}}$, where $\tilde w=w \omega_{P}\omega_{P^{\prime}}$ and
$\lambda_{B}$ are  given by the Peterson-Woodward comparison formula.
We can show

\begin{enumerate}

\item $N_{u, v}^{w, d}=0$ unless $d$ is small enough (for instance $d\leq2$).

\item For a small $d$, $N_{u, v}^{\tilde w, \lambda_{B}}$ is equal to a
certain classical intersection number $N_{u^{\prime}, v^{\prime}}^{w^{\prime},
0}$ for  which the classical Pieri rules (or other known formulas) can be
applied.
\end{enumerate}

The dimension constraint may also be helpful. That is, we have $N_{u, v}^{w,
d}=0$ unless $\ell(u)+\ell(v)=\ell(w)+d\cdot\deg t$. Here  we have $\deg t=
2n+1-k$ (resp. $2n-k$, $2n+1-k$) for case (i) (resp. (ii), (iii)) if $k<n$, and
$\deg t=n+1$ (resp. $2n$) for case (i) (resp. (ii) or (iv)) if $k=n$.

In fact, the above method can also been used to recover the well-known quantum
Pieri rules for complex Grassmannians. Details will be given in the appendix.

Note that  whenever referring to $N_{u, v}^{\tilde w, \lambda_{B}}$ (resp.
$N_{u, v}^{w, \lambda_{P}}$ or $N_{u, v}^{w, d}$), we are discussing the
quantum product  $\sigma^{u}\star_{B}\sigma^{v}$ in $QH^{*}(G/B)$ (resp.
$\sigma^{u}\star_{P}\sigma^{v}$ in $QH^{*}(G/P)$).

Due to the above assumptions on $\Delta$, the Dynkin diagram of  $\{\alpha_1, \cdots, \alpha_{n-1}\}$ is of type $A_{n-1}$ in the standard way.
As a consequence, we have the following fact on certain products in the Weyl subgroup generated by $\{s_1, \cdots, s_{n-1}\}$.

\begin{lemma}[Lemma 3.3 of \cite{leungli33}]\label{lemmaprodAtype}
  For   $1\leq i\leq j\leq m<n$ and $1\leq r\leq m$, we have
      $$(s_is_{i+1}\cdots s_j)(s_{r}s_{r+1}\cdots s_m)= \left\{\begin{array}{ll} (s_rs_{r+1}\cdots s_m)(s_is_{i+1}\cdots s_j)  ,&\mbox{if } r\geq j+2\\
                                 s_is_{i+1}\cdots s_m, &\mbox{if } r=j+1\\
                                  (s_{r+1}s_{r+2}\cdots s_m)(s_is_{i+1}\cdots s_{j-1}),&\mbox{if  }  i\leq r\leq j    \\
                              (s_rs_{r+1}\cdots s_m)(s_{i-1}s_{i}\cdots s_{j-1}),&\mbox{if }
                                  r<i\end{array}\right.\!\!.
            $$
\end{lemma}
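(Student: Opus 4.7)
The plan is to exploit the fact that, by the paper's assumption, the subdiagram $\{\alpha_1,\dots,\alpha_{n-1}\}$ is of type $A_{n-1}$, so the reflections $s_1,\dots,s_{n-1}$ obey only the standard Coxeter relations: $s_a s_b = s_b s_a$ whenever $|a-b|\ge 2$, and $s_a s_{a+1} s_a = s_{a+1} s_a s_{a+1}$. The entire lemma is then a purely symbolic identity in the symmetric group $S_{n}$; no root-system subtleties remain.

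First I would dispatch the two easy cases. For $r\ge j+2$, every index $a\in\{i,\dots,j\}$ and every index $b\in\{r,\dots,m\}$ satisfy $b-a\ge 2$, so the two halves commute term by term, giving case 1 immediately. For $r=j+1$ the identity is just concatenation. Next I would do the middle case, $i\le r\le j$, by induction on $j-r$. The base $j=r$ uses $s_r^2=1$ to collapse $(s_i\cdots s_{r-1})s_r s_r(s_{r+1}\cdots s_m)$ to $(s_i\cdots s_{r-1})(s_{r+1}\cdots s_m)$, and then the indices on the two factors differ by $\ge 2$, so they commute into the required form. For the inductive step, I peel off the right-most $s_j$ on the left block and move it rightward through $(s_r\cdots s_m)$ using commutations past $s_r,\dots,s_{j-2}$ and then using a single braid relation at $s_{j-1}$; after simplification, the inductive hypothesis on the shorter word $(s_i\cdots s_{j-1})(s_r\cdots s_m)$ finishes the argument. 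Case 4, $r<i$, is handled by an analogous induction, this time on $i-r$: peel off the leftmost $s_r$ of the right block and slide it leftward past $s_{i+1},\dots,s_j$ (all commuting with $s_r$), then braid with $s_i$ and apply the inductive hypothesis.

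The step I expect to be the main obstacle is bookkeeping in the inductive cases 3 and 4: one has to track which commutation applies where, and in case 4 one must verify that each braid swap $s_{i-1}s_i s_{i-1} = s_i s_{i-1} s_i$ is being used on indices that actually lie in the type-$A$ range $\{1,\dots,n-1\}$ (this is exactly why the hypothesis $m<n$ matters). A cleaner alternative, which I would keep as a backup if the braid bookkeeping becomes opaque, is to evaluate both sides directly as permutations in $S_{n}$: a standard computation shows that $s_a s_{a+1}\cdots s_b$ is the cycle sending $k\mapsto k+1$ for $a\le k\le b$ and $b+1\mapsto a$; checking that both sides of the claimed identity define the same permutation reduces to a short case analysis on the image of each $k$, depending on where $k$ sits relative to the four intervals $[1,i-1]$, $[i,r-1]$, $[r,j]$, $[j+1,m+1]$. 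Either route yields the lemma with no further input beyond the $A_{n-1}$ relations.
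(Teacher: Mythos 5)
The paper does not prove this lemma; it is quoted verbatim as Lemma~3.3 of \cite{leungli33}, so there is no in-text argument to compare yours against. Taken on its own merits, your proof is correct in outline and uses exactly the right ingredients: since $m<n$, the indices involved all lie in the type-$A_{n-1}$ part of the diagram, so only commutation and braid relations matter, and the four cases can be handled as you describe. Cases $r\geq j+2$ and $r=j+1$ are immediate. For case $i\leq r\leq j$, your induction on $j-r$ works cleanly: moving the terminal $s_j$ of the left block rightward through $s_r,\dots,s_{j-2}$ by commutation, then braiding $s_j s_{j-1} s_j = s_{j-1} s_j s_{j-1}$ and sliding the leftover $s_{j-1}$ past $s_{j+1},\dots,s_m$, yields $s_j(s_r\cdots s_m)=(s_r\cdots s_m)s_{j-1}$, after which the inductive hypothesis finishes.

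The one step I would press you on is the base case $r=i-1$ of case~4. Sliding $s_{i-1}$ leftward through $s_{i+1},\dots,s_j$ leaves you with $s_is_{i-1}(s_{i+1}\cdots s_j)(s_i\cdots s_m)$, and it is not clear what ``braid with $s_i$'' means here, since $s_is_{i-1}$ alone admits no braid simplification and the remaining product $(s_{i+1}\cdots s_j)(s_i\cdots s_m)$ is again a base-case-4 instance, so the recursion does not obviously terminate. A cleaner route once case~3 is in hand: left-multiply the desired identity by $s_{i-1}$, so the left block becomes $(s_{i-1}s_i\cdots s_j)$ and the product $(s_{i-1}\cdots s_j)(s_{i-1}\cdots s_m)$ is a case-3 instance with $r=i-1$, giving $(s_i\cdots s_m)(s_{i-1}\cdots s_{j-1})$; left-multiplying by $s_{i-1}$ once more recovers the stated right-hand side. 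The inductive step for $i-r\geq 2$ is then trivial because $s_r$ commutes with the entire left block. Your permutation-cycle backup, evaluating both sides on each $k$ according to which of the intervals $[1,i-1]$, $[i,r-1]$, $[r,j]$, $[j+1,m+1]$ it lies in, is fully correct and probably the shortest path; either way the lemma holds.
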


\subsection{Classical aspects of quantum Pieri rules for Grassmannians of type $C$}

Throughout this subsection, we consider a Grassmannian of type $C_{n}$.
Precisely, we consider  the isotropic Grassmannian $G/P=IG(k, 2n)$.  Thus the
base $\Delta$ is of type $C_{n}$. Unless otherwise stated, we will always use the following definition of $u$ in the rest of this section.
\begin{defn} Fix $1\leq p \leq k$, we define
   $$u:=s_{k-p+1}\cdots s_{k-1}s_k.$$
\end{defn}

To show Theorem \ref{thmQPRforIGIG},  the main result of this subsection,
we need to compute all the Gromov-Witten invariants $N_{u, v}^{w, d}$ for the
quantum product $\sigma^{u}\star\sigma^{v}$.  Recall that for a given $d$, we
have  $N_{u, v}^{w, d}=N_{u, v}^{w, \lambda_{P}}=N_{u, v}^{\tilde w,
\lambda_{B}}$, where $\lambda_{P}=d\alpha_{k}^{\vee}+Q_{P}^{\vee}$,  and
$\tilde w=w\omega_{P}\omega_{P^{\prime}}$, $\lambda_{B}$ are both defined in
Proposition \ref{comparison}. For each $j$, we simply denote $\mbox{sgn}_{j}%
:=\mbox{sgn}_{\alpha_{j}}$. 

\begin{lemma}
\label{lemmalambdaBfortypeCC}  Write $d=mk+r$ where $m, r\in\mathbb{Z}$ with
$1\leq r\leq k$.  Then we have $\lambda_{B}=\lambda^{\prime}$ with
$\lambda^{\prime}:=m\sum_{j=1}^{k-1}j\alpha_{j}^{\vee}+\sum_{j=1}^{r-1}%
j\alpha_{k-r+j}^{\vee}+ d\sum_{j=k}^{n}\alpha_{j}^{\vee}$.
\end{lemma}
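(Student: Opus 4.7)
The plan is to invoke the uniqueness clause of the Peterson-Woodward comparison formula (Proposition \ref{comparison}(1)): if I exhibit $\lambda' \in Q^\vee$ satisfying (i) $\lambda' + Q_P^\vee = \lambda_P$ and (ii) $\langle \gamma, \lambda' \rangle \in \{0, -1\}$ for every $\gamma \in R_P^+$, then necessarily $\lambda_B = \lambda'$. Condition (i) is immediate: modulo $Q_P^\vee = \bigoplus_{j \neq k} \mathbb{Z}\alpha_j^\vee$ only the $\alpha_k^\vee$-coefficient of $\lambda'$ matters, and that coefficient equals $d=mk+r$, contributed solely by the third summand.

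For (ii) I would first reduce to pairing against simple roots in $\Delta_P$. Since $\Delta_P = \Delta \setminus \{\alpha_k\}$ splits into two connected subdiagrams, namely the type $A_{k-1}$ piece on $\{\alpha_1, \dots, \alpha_{k-1}\}$ and the type $C_{n-k}$ piece on $\{\alpha_{k+1}, \dots, \alpha_n\}$, every $\gamma \in R_P^+$ is a non-negative integral combination of simple roots drawn from exactly one of these pieces. In the $A_{k-1}$ piece, positive roots are the consecutive sums $\alpha_i + \alpha_{i+1} + \cdots + \alpha_j$ with $1 \leq i \leq j \leq k-1$; linearity in the first slot of the pairing yields $\langle \gamma, \lambda' \rangle = \sum_{l=i}^{j} \langle \alpha_l, \lambda' \rangle$. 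For $\gamma$ supported on the $C_{n-k}$ piece, $\langle \gamma, \lambda' \rangle$ is similarly a non-negative combination of the pairings $\langle \alpha_p, \lambda' \rangle$ for $k+1 \leq p \leq n$. Hence it suffices to compute $\langle \alpha_i, \lambda' \rangle$ for each $\alpha_i \in \Delta_P$.

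The heart of the proof is this calculation, which turns on the arithmetic-progression structure of the coefficients $c_1, \dots, c_n$ defined by $\lambda' = \sum_i c_i \alpha_i^\vee$: one has $c_i = mi$ for $1 \leq i \leq k-r$; $c_i = mi + (i-k+r)$ for $k-r \leq i \leq k-1$ (the two formulas agreeing at $i = k-r$); and $c_i = d$ for $k \leq i \leq n$. Thus $(c_i)$ is arithmetic with common difference $m$ on $\{1, \dots, k-r\}$, arithmetic with common difference $m+1$ on $\{k-r, \dots, k\}$, and constant on $\{k, \dots, n\}$. Using $\langle \alpha_i, \lambda' \rangle = -c_{i-1} + 2c_i - c_{i+1}$ in the simply-laced portion, together with the Cartan entries $\langle \alpha_{n-1}, \alpha_n^\vee \rangle = -1$ and $\langle \alpha_n, \alpha_{n-1}^\vee \rangle = -2$ at the type-$C$ end, the second difference of an arithmetic progression vanishes; hence $\langle \alpha_i, \lambda' \rangle = 0$ for every $\alpha_i \in \Delta_P$ except at the slope-change index $i = k-r$, where the slope jumps from $m$ to $m+1$, giving $\langle \alpha_{k-r}, \lambda' \rangle = m - (m+1) = -1$. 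Combined with the reduction above, this yields (ii) and forces $\lambda_B = \lambda'$.

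The hard part will be the bookkeeping of boundary cases—namely $r = k$ (the two APs merge, the second sum in $\lambda'$ is empty, and there is no slope change, so all pairings vanish), $r = 1$ (the slope change occurs at the rightmost index $i = k-1$ of the $A$-piece), $k = n-1$ (the $A$-piece abuts directly on the long simple root $\alpha_n$), and the Lagrangian case $k = n$ (no $C_{n-k}$ piece at all). Each of these degenerations must be checked separately to confirm that the short/long root asymmetry of type $C$ does not spoil the computation; in particular, at $\alpha_n$ the constant values $c_{n-1} = c_n = d$ ensure $\langle \alpha_n, \lambda' \rangle = -2d + 2d = 0$ whenever $k < n$.
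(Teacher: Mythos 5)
Your proposal is correct and takes essentially the same approach as the paper: exhibit $\lambda'$ with the correct $\alpha_k^\vee$-coefficient, check that $\langle\alpha_i,\lambda'\rangle$ equals $-1$ at $i=k-r$ and $0$ at every other $\alpha_i\in\Delta_P$, and invoke the uniqueness in the Peterson--Woodward formula; the paper compresses the pairing computations into ``it is easy to check,'' whereas you unpack them via the arithmetic-progression structure of the coefficients, which is a clean way to organize the verification. One small slip in your list of boundary cases: the second sum $\sum_{j=1}^{r-1}j\alpha_{k-r+j}^\vee$ is empty when $r=1$, not when $r=k$ (when $r=k$ the first two sums combine to $(m+1)\sum_{j=1}^{k-1}j\alpha_j^\vee$ and the only slope change lies at $i=k\notin\Delta_P$, so your conclusion that all pairings on $\Delta_P$ vanish is still right).
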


\begin{proof}
It is easy to check that  $\langle\alpha_{i}, \lambda^{\prime}\rangle=-1$ if
$i=k-r$, or $0$ otherwise. Hence, $\langle\gamma, \lambda^{\prime}\rangle
\in\{0, -1\}$ for all $\gamma\in R_{P}^{+}$.  Thus the statement follows from
the uniqueness of $\lambda_{B}$ (see Proposition \ref{comparison}).
\end{proof}
\begin{lemma}\label{lemmaAlphakLambdaB}
  With the same notations as in     Lemma \ref{lemmalambdaBfortypeCC}, we have
     $\langle\alpha_k, \lambda_B\rangle=m+1$ if $k<n$, or  $2(m+1)$ if $k=n$.
\end{lemma}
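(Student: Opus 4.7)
The plan is a direct computation using the Cartan matrix for type $C_n$. I would first recall that in Humphreys' labeling, the nonzero entries $\langle \alpha_i, \alpha_j^\vee\rangle$ for type $C_n$ are the diagonal $2$'s together with $\langle \alpha_i, \alpha_{i\pm 1}^\vee\rangle = -1$ for $1\leq i<n-1$, with the end of the diagram contributing $\langle \alpha_{n-1}, \alpha_n^\vee\rangle = -1$ and $\langle \alpha_n, \alpha_{n-1}^\vee\rangle = -2$. Since only $\alpha_{k-1}^\vee$, $\alpha_k^\vee$, and (if $k<n$) $\alpha_{k+1}^\vee$ pair nontrivially with $\alpha_k$, only a few terms of the sum
\[
\lambda_B = m\sum_{j=1}^{k-1} j\,\alpha_j^\vee + \sum_{j=1}^{r-1} j\,\alpha_{k-r+j}^\vee + d\sum_{j=k}^{n}\alpha_j^\vee
\]
will survive the pairing with $\alpha_k$.

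In the case $k<n$, I would extract the contributions term by term: from the first sum only $j=k-1$ survives, giving $-m(k-1)$; from the middle sum, the index $k-r+j$ hits $k-1$ precisely when $j=r-1$, contributing $-(r-1)$ (and nothing when $r=1$); and from the last sum both $\alpha_k^\vee$ and $\alpha_{k+1}^\vee$ appear (or just $\alpha_k^\vee$ if $k=n-1$, but with the same pairing $-1$), giving $2d-d = d$. Substituting $d=mk+r$ then collapses the total to $-m(k-1)-(r-1)+mk+r = m+1$, as desired. The $r=1$ case must be noted separately since the middle sum is empty, but it yields the same answer.

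In the case $k=n$, the last sum degenerates to $d\,\alpha_n^\vee$, and the doubled pairing $\langle \alpha_n, \alpha_{n-1}^\vee\rangle=-2$ replaces each $-1$ from the previous case by $-2$, while $\langle \alpha_n, \alpha_n^\vee\rangle = 2$ doubles the contribution from $d\alpha_n^\vee$. The same substitution $d=mn+r$ produces $2[-m(n-1)-(r-1)+d] = 2(m+1)$.

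There is no real obstacle here; the only thing to be careful about is the edge case $r=1$ (empty middle sum) and the asymmetry between $\langle\alpha_{n-1},\alpha_n^\vee\rangle$ and $\langle\alpha_n,\alpha_{n-1}^\vee\rangle$ in the $C_n$ Cartan matrix, which is exactly what produces the factor of $2$ when $k=n$ versus $k<n$.
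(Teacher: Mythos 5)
Your computation is correct and follows the same route as the paper: extract from the explicit formula for $\lambda_B$ the coefficients of $\alpha_{k-1}^\vee$, $\alpha_k^\vee$, and (for $k<n$) $\alpha_{k+1}^\vee$, pair with $\alpha_k$ via the type $C_n$ Cartan integers, and substitute $d=mk+r$. One small slip: your parenthetical remark ``or just $\alpha_k^\vee$ if $k=n-1$'' is not right --- when $k=n-1<n$ the term $d\alpha_{k+1}^\vee=d\alpha_n^\vee$ still appears in $d\sum_{j=k}^n\alpha_j^\vee$ and pairs to $-d$, so the contribution is $2d-d=d$ for every $k<n$ for the same reason; fortunately this is exactly the value you use, so the conclusion is unaffected.
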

\begin{proof}
   If $k=n$,  we have $\langle\alpha_{k}, \lambda_{B}\rangle
   =\langle \alpha_n, m(n-1)\alpha_{n-1}^\vee+(r-1)\alpha_{n-1}^\vee+(mn+r)\alpha_n^\vee\rangle =2(m+1)$, by noting   $\langle \alpha_n, \alpha_j^\vee\rangle =0$ for all $j<n-1$.
 If $k<n$, we have  $\langle\alpha_{k}, \lambda_{B}\rangle
   =\langle \alpha_k, m(k-1)\alpha_{k-1}^\vee+(r-1)\alpha_{k-1}^\vee+(mk+r)\alpha_k^\vee+(mk+r)\alpha_{k+1}^\vee\rangle =m+1$.
\end{proof}
\begin{lemma}
\label{lemmavanishforCCC}  Suppose $d\geq k+1$, then we have $N_{u, v}^{w,
d}=0$ for any $w\in W^{P}$.
\end{lemma}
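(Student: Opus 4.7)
The plan is to invoke the standard dimension constraint from three-pointed, genus zero Gromov-Witten theory on $X=IG(k,2n)$: any nonzero $N_{u,v}^{w,d}$ satisfies $\ell(u)+\ell(v)=\ell(w)+d\cdot\deg t$. Since $\ell(w)\geq 0$, $\ell(u)=p\leq k$ by the definition of $u$, and $\ell(v)\leq\dim_{\mathbb{C}} X$, this forces $d\cdot\deg t\leq k+\dim_{\mathbb{C}} X$. My strategy is to show that $(k+1)\cdot\deg t$ already exceeds $k+\dim_{\mathbb{C}} X$, so no $d\geq k+1$ can contribute.

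Concretely, I would split on whether $k<n$ (in which case $\deg t=2n+1-k$ and $\dim_{\mathbb{C}} X=k(4n-3k+1)/2$) or $k=n$ (in which case $\deg t=n+1$ and $\dim_{\mathbb{C}} X=n(n+1)/2$). A routine expansion yields
\[
(k+1)(2n+1-k)-\Bigl(k+\tfrac{k(4n-3k+1)}{2}\Bigr)=\tfrac{(k-1)(k-2)+4n}{2}>0
\]
in the first case and $(n+1)^{2}-\bigl(n+n(n+1)/2\bigr)=(n^{2}+n+2)/2>0$ in the second. Both differences are strict for all $n\geq 1$, $k\geq 1$, so for $d\geq k+1$ we have $d\cdot\deg t\geq (k+1)\deg t>k+\dim_{\mathbb{C}} X\geq\ell(u)+\ell(v)$, contradicting the dimension constraint; hence $N_{u,v}^{w,d}=0$.

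The main obstacle is essentially none beyond careful bookkeeping: one must correctly substitute $\deg t$ and $\dim_{\mathbb{C}} X$ in the Lagrangian versus non-Lagrangian cases and check the algebra. As a structural alternative consistent with the paper's tools, one could instead apply Proposition~\ref{propmainthminLeungLi}(1) at $\alpha=\alpha_{k}$ after passing to $G/B$ via Proposition~\ref{comparison}: by Lemma~\ref{lemmaAlphakLambdaB} the value $\langle\alpha_{k},\lambda_{B}\rangle$ equals $m+1$ (if $k<n$) or $2(m+1)$ (if $k=n$), and this already exceeds the maximum possible value $\mathrm{sgn}_{\alpha_{k}}(u)+\mathrm{sgn}_{\alpha_{k}}(v)\leq 2$ whenever $k=n$ or $m\geq 2$, forcing $N_{u,v}^{\tilde{w},\lambda_{B}}=0$. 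The residual subcase $k<n$, $m=1$ (i.e.\ $k+1\leq d\leq 2k$) would then be dispatched by one further use of Corollary~\ref{corovanish} or by the dimension count above.
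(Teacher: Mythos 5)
Your proposal is correct, and your primary argument is genuinely different from the one the paper gives. The paper's actual proof of this lemma proceeds via the Peterson--Woodward comparison: using Lemma~\ref{lemmaAlphakLambdaB} it computes $\langle\alpha_{k},\lambda_{B}\rangle$ to be $m+1$ (if $k<n$) or $2(m+1)$ (if $k=n$) with $m\geq 1$, applies Proposition~\ref{propmainthminLeungLi}(1) to kill the invariant when this pairing exceeds $2$, and in the residual subcase $k<n$, $m=1$ uses a preliminary vanishing along $\alpha_{k+1}$ (via Lemma~\ref{lemmasgnV=0} and Proposition~\ref{propmainthminLeungLi}(1)) followed by Corollary~\ref{corovanish}(1). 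Your \emph{alternative} sketch in the final paragraph reproduces exactly this route. Your \emph{main} route, the dimension count $d\cdot\deg t=\ell(u)+\ell(v)-\ell(w)\le k+\dim_{\mathbb{C}}X$ combined with the strict inequalities $(k+1)(2n+1-k)-\bigl(k+\tfrac{k(4n-3k+1)}{2}\bigr)=\tfrac{(k-1)(k-2)+4n}{2}>0$ and $(n+1)^{2}-\bigl(n+\tfrac{n(n+1)}{2}\bigr)=\tfrac{n^{2}+n+2}{2}>0$, is arithmetically correct and is in fact acknowledged by the paper in the one-line remark immediately following the lemma (``The above lemma also follows directly from the dimension count''). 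The dimension count is shorter and self-contained, requiring only the basic degree constraint on Gromov--Witten invariants and the known values of $\deg t$ and $\dim X$; the paper's $\lambda_{B}$-based argument is more in keeping with the machinery that is subsequently sharpened in Proposition~\ref{propdlarger2CCC} to push the vanishing bound down from $d\geq k+1$ to $d\geq 2$, where a bare dimension count no longer suffices. Both are valid proofs of the statement as given.
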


\begin{proof}
Since $d\geq k+1$, we have $d=mk+r$ with $1\leq r\leq k$ and $m\geq 1$.

When $k=n$, we have $\langle\alpha_{k}, \lambda_{B}\rangle
   =2(m+1)>2$ by Lemma \ref{lemmaAlphakLambdaB}.
Thus we have   $N_{u, v}^{w, d}=N_{u, v}^{\tilde w, \lambda_{B}}=0$ by
Proposition \ref{propmainthminLeungLi} (1).

When $k<n$,  we have  $\langle\alpha_{k}, \lambda_{B}\rangle=m+1\geq2$, by Lemma \ref{lemmaAlphakLambdaB} again.
 If ``$>$" holds, then we are already done  by using Proposition \ref{propmainthminLeungLi} (1) again.
 Note $v\in W^P$ and $\alpha_{k+1}\in \Delta_P$. By Lemma \ref{lemmasgnV=0}, we have $\mbox{sgn}_{k+1}(v)=0$.  If
$\langle\alpha_{k}, \lambda_{B}\rangle=2$,  then we have  $\mbox{sgn}_{{k+1}}%
(us_{k})+\mbox{sgn}_{{k+1}}(v)=0<1\leq\mbox{sgn}_{{k+1}}(\tilde w
s_{k})+\langle\alpha_{k+1}, \lambda_{B}-\alpha_{k}^{\vee}\rangle$. Thus we have
 $N_{us_{k},
v}^{\tilde w s_{k}, \lambda_{B}-\alpha_{k}^{\vee}}=0$
by Proposition \ref{propmainthminLeungLi}(1). Consequently,  we   still have
 $N_{u, v}^{w, d}=N_{u, v}^{\tilde w, \lambda_{B}}=0$  by
Corollary \ref{corovanish} (1).
\end{proof}

\begin{remark}
  The above lemma also follows directly from the dimension count.
\end{remark}

\begin{lemma}
\label{lemvanforCCCkknn}  Let $u^{\prime}, v^{\prime}, w^{\prime}\in W$ and
$\lambda\in Q^{\vee}$. For the quantum product  $\sigma^{u^{\prime}}%
\star\sigma^{v^{\prime}}$ in $QH^{*}(G/B)$, the structure constant
$N_{u^{\prime}, v^{\prime}}^{w^{\prime}, \lambda}$ vanishes, if both  {\upshape (a)}
  and  {\upshape (b)} hold:  {\upshape
\[
\mbox{(a)}\,\, \langle\alpha_{n}, \lambda\rangle=2, \langle\alpha_{n-1},
\lambda\rangle=0; \quad\mbox{(b)}\,\,  \mbox{sgn}_{n}(u^{\prime}s_{n}%
s_{n-1})=0, \mbox{sgn}_{n-1}(v^{\prime})=0.
\]
}
\end{lemma}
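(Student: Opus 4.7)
The plan is to deduce the vanishing from a short chain of exchange moves built out of Proposition \ref{propmainthminLeungLi} and Corollary \ref{corovanish}, keyed to the two simple roots $\alpha_n$ and $\alpha_{n-1}$; the relevant type-$C_n$ Cartan entries are $\langle\alpha_n,\alpha_{n-1}^{\vee}\rangle=-2$ and $\langle\alpha_{n-1},\alpha_n^{\vee}\rangle=-1$.

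First I would extract a nonvanishing normal form. Applying Proposition \ref{propmainthminLeungLi}(1) at $\alpha_n$, hypothesis (a) forces $\mbox{sgn}_n(u')=\mbox{sgn}_n(v')=1$ and $\mbox{sgn}_n(w')=0$, since otherwise $N_{u',v'}^{w',\lambda}$ is already zero. In particular, the prerequisites to invoke equation (\ref{eq2}) at $\alpha_n$ are now in place.

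Next I would push the $\alpha_n$-exchange onto the $u$-side rather than the $v$-side, so that $v'$ is preserved in the first slot for a later application of Corollary \ref{corovanish}. Using commutativity $N_{u',v'}^{w',\lambda}=N_{v',u'}^{w',\lambda}$ together with (\ref{eq2}) at $\alpha=\alpha_n$, one obtains
\[
N_{u',v'}^{w',\lambda}\;=\;N_{v',\,u's_n}^{w's_n,\,\lambda-\alpha_n^{\vee}}.
\]
Since $\langle\alpha_{n-1},\lambda-\alpha_n^{\vee}\rangle=0-(-1)=1$ and $\mbox{sgn}_{n-1}(v')=0$ by hypothesis (b), Corollary \ref{corovanish}(2) applied at $\alpha_{n-1}$ with $v'$ in the first slot reduces the problem to establishing
\[
M\;:=\;N_{v',\,u's_ns_{n-1}}^{w's_ns_{n-1},\,\lambda-\alpha_n^{\vee}-\alpha_{n-1}^{\vee}}\;=\;0.
\]

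Finally I would dispose of $M$ in one stroke via Proposition \ref{propmainthminLeungLi}(1) at $\alpha=\alpha_n$. Using $\langle\alpha_n,\alpha_{n-1}^{\vee}\rangle=-2$ one checks $\langle\alpha_n,\lambda-\alpha_n^{\vee}-\alpha_{n-1}^{\vee}\rangle=2-2-(-2)=2$, so the $w$-side of the required inequality is already at least $2$; on the other hand the $(u,v)$-side equals $\mbox{sgn}_n(v')+\mbox{sgn}_n(u's_ns_{n-1})=1+0=1$ by the normal form together with hypothesis (b), so the inequality of Proposition \ref{propmainthminLeungLi}(1) fails and $M=0$. Unwinding the chain gives $N_{u',v'}^{w',\lambda}=0$. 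The only non-mechanical point---and what I expect to be the main thing to spot---is that the element $u's_ns_{n-1}$ appearing in hypothesis (b) is exactly what is produced on the $u$-side after one $\alpha_n$-exchange followed by one $\alpha_{n-1}$-exchange; once this alignment is recognized, the rest is a bookkeeping check with the $C_n$ Cartan entries.
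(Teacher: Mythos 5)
Your proof is correct and is essentially the paper's proof run in the opposite direction: you reduce $N_{u',v'}^{w',\lambda}$ top-down through the same $\alpha_n,\alpha_{n-1},\alpha_n$ exchange chain, whereas the paper first shows the deepest invariant $N_{u's_ns_{n-1},v'}^{w's_ns_{n-1},\lambda-\alpha_n^\vee-\alpha_{n-1}^\vee}=0$ by Proposition \ref{propmainthminLeungLi}(1) and then propagates the vanishing upward via Corollary \ref{corovanish}(2) and (1). Your extra normalization step (forcing $\mbox{sgn}_n(u')=\mbox{sgn}_n(v')=1$, $\mbox{sgn}_n(w')=0$) is exactly what Corollary \ref{corovanish}(1) encapsulates, so the content is the same.
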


\begin{proof}
Note that $\mbox{sgn}_{n}(u^{\prime}s_{n}s_{n-1})+\mbox{sgn}_{n}(v^{\prime
})=0+\mbox{sgn}_{n}(v^{\prime})\leq1<2=\langle\alpha_{n}, \lambda\rangle=
\langle\alpha_{n}, \lambda-\alpha_{n}^{\vee}-\alpha_{n-1}^{\vee}\rangle$.  By
Proposition \ref{propmainthminLeungLi} (1), we have  $N_{u^{\prime}%
s_{n}s_{n-1}, v^{\prime}}^{w^{\prime}s_{n}s_{n-1}, \lambda-\alpha_{n}^{\vee
}-\alpha_{n-1}^{\vee}}=0$.  Since $\mbox{sgn}_{n-1}(v^{\prime})=0$ and
$\langle\alpha_{n-1}, \lambda-\alpha_{n}^{\vee}\rangle=-\langle\alpha_{n-1},
\alpha_{n}^{\vee}\rangle=1$,  we have  $N_{u^{\prime}s_{n}, v^{\prime}%
}^{w^{\prime}s_{n}, \lambda-\alpha_{n}^{\vee}}=0$ by Corollary
\ref{corovanish} (2).  Consequently, we have  $N_{u^{\prime}, v^{\prime}%
}^{w^{\prime}, \lambda}=0$ by Corollary \ref{corovanish} (1).
\end{proof}

The next proposition shows us that $t$ is the largest  power $t^{d}$ appearing
in the quantum  product $\sigma^{u}\star\sigma^{v}$ in $QH^{*}(G/P)$.

\begin{prop}
\label{propdlarger2CCC}  Suppose $d\geq2$, then we have $N_{u, v}^{w, d}=0$
for any $w\in W^{P}$.
\end{prop}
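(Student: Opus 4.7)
The plan is to handle the remaining range $2\le d\le k$ (the case $d\ge k+1$ is already covered by Lemma~\ref{lemmavanishforCCC}), splitting according to whether $k=n$ (Lagrangian) or $k<n$. For $2\le d\le k$, Lemma~\ref{lemmalambdaBfortypeCC} specialises (with $m=0$, $r=d$) to
\[
\lambda_B=\sum_{j=1}^{d-1}j\,\alpha_{k-d+j}^\vee+d\sum_{j=k}^{n}\alpha_j^\vee.
\]
Two preparatory facts are recorded: (a) a pairing computation in the Cartan matrix of type $C_n$ gives $\langle\alpha_k,\lambda_B\rangle=1$ if $k<n$ and $=2$ if $k=n$, $\langle\alpha_{k-d},\lambda_B\rangle=-1$, and $\langle\alpha_j,\lambda_B\rangle=0$ at every other $j$ (in particular $\langle\alpha_{n-1},\lambda_B\rangle=0$ when $k=n$); (b) an easy braid-relation induction shows that $u=s_{k-p+1}\cdots s_k$ satisfies $\mbox{sgn}_j(u)=1$ precisely when $j=k$, while Lemma~\ref{lemmasgnV=0} gives $\mbox{sgn}_j(v)=0$ for every $j\neq k$.

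In the Lagrangian case $k=n$, I would verify the four hypotheses of Lemma~\ref{lemvanforCCCkknn} for $(u',v',w',\lambda)=(u,v,\tilde w,\lambda_B)$: the pairings $\langle\alpha_n,\lambda_B\rangle=2$ and $\langle\alpha_{n-1},\lambda_B\rangle=0$ are in~(a); the product $us_n s_{n-1}=s_{n-p+1}\cdots s_{n-2}$ (which equals $s_{n-1}$ when $p=1$) is supported on simple reflections that commute with $s_n$, so $\mbox{sgn}_n(us_ns_{n-1})=0$; and $\mbox{sgn}_{n-1}(v)=0$ is in~(b). Lemma~\ref{lemvanforCCCkknn} then yields $N_{u,v}^{\tilde w,\lambda_B}=0$ at once.

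In the non-Lagrangian case $k<n$, the strategy is to strip simple coroots from $\lambda_B$ iteratively using Proposition~\ref{propmainthminLeungLi}(2) and Corollary~\ref{corovanish}. By~(b) and Proposition~\ref{propmainthminLeungLi}(1), the invariant already vanishes unless $\mbox{sgn}_j(\tilde w)=0$ for every $j\notin\{k,k-d\}$ and $\mbox{sgn}_k(\tilde w)\le\mbox{sgn}_k(v)$. We then split on $\mbox{sgn}_k(v)$: if $\mbox{sgn}_k(v)=0$, use the symmetry $N_{u,v}^{\tilde w,\lambda_B}=N_{v,u}^{\tilde w,\lambda_B}$ and Corollary~\ref{corovanish}(2) at $\alpha_k$ (now valid as $\mbox{sgn}_k(v)=0$), reducing the problem to the vanishing of $N_{v,us_k}^{\tilde w s_k,\lambda_B-\alpha_k^\vee}$; if $\mbox{sgn}_k(v)=1$, so $\mbox{sgn}_k(\tilde w)=1$ as well, Proposition~\ref{propmainthminLeungLi}(2) equation~\eqref{eq1} gives $N_{u,v}^{\tilde w,\lambda_B}=N_{us_k,vs_k}^{\tilde w,\lambda_B-\alpha_k^\vee}$. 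Either branch removes one copy of $\alpha_k^\vee$ from $\lambda_B$ and absorbs $s_k$ into the first argument, producing the shorter word $us_k=s_{k-p+1}\cdots s_{k-1}$, which commutes with each $s_{k+i}$ $(i\ge 1)$ and therefore has $\mbox{sgn}_{k+i}=0$ throughout the iteration. One then applies Corollary~\ref{corovanish}(2) successively at $\alpha_{k+1},\ldots,\alpha_{n-1}$ to strip $\alpha_{k+1}^\vee,\ldots,\alpha_{n-1}^\vee$, and finally Corollary~\ref{corovanish}(1) at $\alpha_n$ to strip $\alpha_n^\vee$; the pairing at $\alpha_n$ has become $2$ by this stage thanks to the long-root relation $\langle\alpha_n,\alpha_{n-1}^\vee\rangle=-2$. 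The resulting invariant has $\lambda$ reduced by $\sum_{j=k}^n\alpha_j^\vee$ and exhibits the same structural shape as the initial problem, but with the positive pairing shifted one step leftward to $\alpha_{k-1}$. Iterating the whole procedure leftward with $\alpha_{k-1},\alpha_{k-2},\ldots$ eventually exhausts $\lambda$ and produces a classical Gromov-Witten invariant whose vanishing can be read off from Proposition~\ref{propmainthminLeungLi}(1).

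The main technical obstacle is the bookkeeping through this chain: tracking the $\mbox{sgn}$-values of the progressively modified first and second arguments (and the target $\tilde w$) so that the hypotheses of Corollary~\ref{corovanish} and Proposition~\ref{propmainthminLeungLi}(2) remain satisfied at every step, and verifying that the two branches produced by the dichotomy $\mbox{sgn}_k(v)\in\{0,1\}$ reconcile at the conclusion of the iterated reduction.
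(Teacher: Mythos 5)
Your $k=n$ branch is correct and matches the paper's argument: verify the hypotheses of Lemma~\ref{lemvanforCCCkknn} for $(u,v,\tilde w,\lambda_B)$ and conclude.

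Your $k<n$ branch, however, departs from the paper and has two genuine gaps. First, the dichotomy on $\mbox{sgn}_k(v)$ is incomplete. In the subcase $\mbox{sgn}_k(v)=1$ you assert that $\mbox{sgn}_k(\tilde w)=1$ as well, but Proposition~\ref{propmainthminLeungLi}(1) only yields $\mbox{sgn}_k(\tilde w)\le\mbox{sgn}_k(v)=1$. If $\mbox{sgn}_k(\tilde w)=0$ then $\mbox{sgn}_k(\tilde w)+\langle\alpha_k,\lambda_B\rangle=1\neq 2$, so the grading equality required by Proposition~\ref{propmainthminLeungLi}(2) fails and equation~\eqref{eq1} is not available; at the same time Proposition~\ref{propmainthminLeungLi}(1) gives no vanishing, since $1\le 2$. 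Your two branches therefore do not cover this case. Second, the leftward iteration at $\alpha_{k-1},\alpha_{k-2},\ldots$ does not run as described: after the first round the special word has become $us_k=s_{k-p+1}\cdots s_{k-1}$, and for $p\ge 2$ one has $\mbox{sgn}_{k-1}(us_k)=1$, so Corollary~\ref{corovanish}(2) cannot be applied at $\alpha_{k-1}$ with $us_k$ in the first slot. Your remark that $us_k$ commutes with $s_{k+i}$ for $i\ge 1$ is beside the point, since the leftward step needs $\mbox{sgn}_{k-1}$, not $\mbox{sgn}_{k+i}$, to vanish; and swapping the two arguments to use the modified $v$-word instead only replaces this obstacle with an equally delicate sign-tracking problem that you have not carried out.

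The paper avoids both difficulties with a preliminary move your proposal misses: first apply Corollary~\ref{corovanish}(3) at $\alpha_{k+1}$ (which is permissible because $\mbox{sgn}_{k+1}(u)=\mbox{sgn}_{k+1}(v)=0$ and $\langle\alpha_{k+1},\lambda_B\rangle=0$) to pass to $N_{us_{k+1},v}^{\tilde w s_{k+1},\lambda_B}$. The crucial payoff is that $us_{k+1}=s_{k-p+1}\cdots s_k s_{k+1}$ satisfies $\mbox{sgn}_j(us_{k+1})=0$ for every $j\le k$, so it can remain fixed in the first slot while one strips $\alpha_k^\vee,\alpha_{k-1}^\vee,\ldots,\alpha_{k-d+1}^\vee$ via Corollary~\ref{corovanish}(2), with no dichotomy. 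The paper then performs an outer induction shifting the anchor rightward to $us_{k+1}\cdots s_m$ (via Corollary~\ref{corovanish}(3) and another inner strip at each stage), ending with $u'=us_{k+1}\cdots s_n$, $\lambda_B'=\sum_{j=1}^d j\alpha_{n-d+j}^\vee$, and an application of Lemma~\ref{lemvanforCCCkknn}, where $d\ge 2$ is used to check $\mbox{sgn}_{n-1}(v')=0$. To repair your proposal you would need to import this preliminary $\alpha_{k+1}$-move, after which the dichotomy becomes unnecessary and the stripping pattern should follow the paper's double induction rather than a leftward iteration.
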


\begin{proof}
We can assume $2\leq d\leq k$ due to Lemma \ref{lemmavanishforCCC}.  It
suffices to show $N_{u, v}^{\tilde w, \lambda_{B}}=0$, where we note
$\lambda_{B}= \sum_{j=1}^{d-1}j\alpha_{k-d+j}^{\vee}+d\sum_{j=k}^{n}\alpha
_{j}^{\vee}$.  Consequently, $\langle\alpha_{k-1}, \lambda_{B} \rangle=0$.

Suppose $k=n$, then $\langle\alpha_{n}, \lambda_{B} \rangle=2$.
Clearly, we have $\mbox{sgn}_{n}(us_{n}s_{n-1})=0$  and $\mbox{sgn}_{n-1}%
(v)=0$. Thus we are done by Lemma \ref{lemvanforCCCkknn}.

Now we assume $k<n$.
Since $\mbox{sgn}_{k+1}(u)=\mbox{sgn}_{k+1}(v)=0$ and $\langle\alpha_{k+1},
\lambda_{B}\rangle=0$,  it suffices to show $N_{us_{k+1}, v}^{\tilde w
s_{k+1}, \lambda_{B}}=0$, due to Corollary \ref{corovanish} (3).  Note
$\mbox{sgn}_{k}(us_{k+1})=0$ and $\langle\alpha_{k}, \lambda_{B}\rangle=1$.
Then it suffices to show  $N_{us_{k+1}, vs_{k}}^{\tilde w s_{k+1}s_{k},
\lambda_{B}-\alpha_{k}^{\vee}}=0$, due to Corollary \ref{corovanish} (2).  For
any $1\leq i\leq m\leq n$, we denote  $v_{i}^{(m)}:= s_{m}s_{m-1}\cdots
s_{m-i+1}.$ By induction on $i$, we reduce the above statement to the
following one.  To show $N_{us_{k+1}, vs_{k}}^{\tilde w s_{k+1}s_{k},
\lambda_{B}-\alpha_{k}^{\vee}}=N_{us_{k+1}, vv_{1}^{(k)}}^{\tilde w
s_{k+1}v_{1}^{(k)}, \lambda_{B}-\sum_{j=1}^{1}\alpha_{k-j+1}^{\vee}}=0$, it
suffices to show $N_{us_{k+1}, vv^{(k)}_{d}}^{\tilde w s_{k+1}v^{(k)}_{d},
\lambda_{B}-\sum_{j=1}^{d}\alpha_{k-j+1}^{\vee}}=0$. Furthermore by induction
on $m$, it suffices to show  $N_{u^{\prime}, v^{\prime}}^{w^{\prime},
\lambda_{B}^{\prime}}=0$, in which $u^{\prime}=us_{k+1}\cdots s_{n-1}s_{n}$,
$v^{\prime}=vv^{(k)}_{d}\cdots v^{(n-1)}_{d}$, $w^{\prime}=\tilde
ws_{k+1}\cdots s_{n-1}s_{n}v^{(k)}_{d}\cdots v^{(n-1)}_{d}$ and $\lambda
_{B}^{\prime}=\lambda_{B}-\sum_{j=1}^{d}\alpha_{k-j+1}^{\vee}-\cdots
-\sum_{j=1}^{d}\alpha_{n-1-j+1}^{\vee}=\sum_{j=1}^{d}j\alpha_{n-d+j}^{\vee}$.
(Here we always use Corollary \ref{corovanish} (2), (3) for the inductions.)

Note that $\langle\alpha_{n}, \lambda_{B}^{\prime}\rangle=2, \langle
\alpha_{n-1}, \lambda_{B}^{\prime}\rangle=0$ and $\mbox{sgn}_{n}(u^{\prime
}s_{n}s_{n-1})=0$. In addition, we note that $d\geq2$, so that $v_{d}%
^{(j)}(\alpha_{j})=\alpha_{j-1}$ for each $k\leq j\leq n-1$.  Thus we have
$v^{\prime}(\alpha_{n-1})=v(\alpha_{k-1})\in R^{+}$ and consequently
$\mbox{sgn}_{n-1}(v^{\prime})=0$. Hence, we do have $N_{u^{\prime}, v^{\prime}}^{w^{\prime},
\lambda_{B}^{\prime}}=0$, by using Lemma
\ref{lemvanforCCCkknn}.
\end{proof}

\begin{remark}
Proposition \ref{propdlarger2CCC} (resp. Proposition \ref{propdlarger2BBDD})
can also be proved by using Theorem 1.3 (d)  (resp. Theorem 2.3 (d) and
Theorem 3.3 (d)) of \cite{BKT2}.
\end{remark}

The next lemma also works with exactly the same arguments, for either of the
cases: (1) $\Delta$ is of type $B_{n}$; (2) $\Delta$ is of type $D_{n+1}$ and
$k<n$.

\begin{lemma}
\label{lemforBBBCCCDD}
Let
$u^{\prime}=s_{j-i+1}\cdots s_{j-1}s_{j}$ where  $1\leq i\leq j<k$, and
   $\lambda$ be effective with    $\langle
\chi_{j}, \lambda\rangle=0$. If $\lambda\neq 0$, then we have $N_{u^{\prime},
v^{\prime}}^{w^{\prime}, \lambda}= 0$ for any $v', w'\in W$.
\end{lemma}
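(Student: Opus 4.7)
My plan is to reduce the problem to the signature arithmetic of Proposition \ref{propmainthminLeungLi} and induct on the coefficient sum $|\lambda|:=\sum_m a_m$ of $\lambda=\sum_m a_m\alpha_m^\vee$. The key preliminary fact is that $\mbox{sgn}_m(u')=0$ for every $m\neq j$. Since $j<k\leq n$, all simple reflections appearing in $u'$ lie in the type $A$-subchain, so a direct computation (or the observation that $u'\in W^{P_j}$ for $P_j$ the maximal parabolic indexed by $\Delta\setminus\{\alpha_j\}$) shows $u'(\alpha_m)\in R^+$ for all $m\neq j$. Note also that $\langle\chi_j,\lambda\rangle=0$ translates to $a_j=0$.

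The base case $|\lambda|=1$ is immediate: writing $\lambda=\alpha_m^\vee$ with $m\neq j$, we have $\langle\alpha_m,\lambda\rangle=2$, while $\mbox{sgn}_m(u')+\mbox{sgn}_m(v')\leq 0+1=1$, so Proposition \ref{propmainthminLeungLi} (1) forces $N_{u',v'}^{w',\lambda}=0$.

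For the inductive step $|\lambda|\geq 2$, I want to locate some $m\neq j$ with $a_m\geq 1$ and $\langle\alpha_m,\lambda\rangle\geq 1$. This follows from positive definiteness of the symmetrized Cartan matrix: taking the standard positive diagonal $D$ (so $DA$ is symmetric and positive definite; for $C_n$ one may take $D=\mbox{diag}(2,\dots,2,1)$), the identity
\[
\sum_m d_m a_m\langle\alpha_m,\lambda\rangle \;=\; a^TDA\,a
\]
is strictly positive for $\lambda\neq 0$, and since $a_j=0$ the sum restricts to $m\neq j$, so some such term must be positive. If $\langle\alpha_m,\lambda\rangle\geq 2$, then Proposition \ref{propmainthminLeungLi} (1) at $\alpha_m$ yields vanishing directly, exactly as in the base case. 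Otherwise $\langle\alpha_m,\lambda\rangle=1$, and Corollary \ref{corovanish} (2) (applicable because $\mbox{sgn}_m(u')=0$) reduces the question to $N_{u',v's_m}^{w's_m,\lambda-\alpha_m^\vee}=0$. The new exponent is effective (since $a_m\geq 1$), nonzero (otherwise $\langle\alpha_m,\lambda\rangle$ would be $2$, not $1$), of strictly smaller $|\cdot|$, and still satisfies $\langle\chi_j,\cdot\rangle=0$ because $m\neq j$, so the induction hypothesis closes the loop.

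The main obstacle is really the sign computation for $u'$; once that is in hand the descent is clean. Note that the positive-definiteness step is type-uniform, and the sign computation only uses that the reflections in $u'$ all lie within $\{s_1,\dots,s_{n-1}\}$, which is automatic from $j<k\leq n$ (resp.\ $k\leq n-1$). Hence the same argument extends verbatim to the $B_n$ and $D_{n+1}$ (with $k<n$) cases asserted by the remark preceding the lemma.
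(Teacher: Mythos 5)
Your proof is correct, but it takes a genuinely different route from the paper's. The paper's own argument bounds $N_{u',v'}^{w',\lambda}$ from above by the corresponding coefficient in $\bigl(\sigma^{s_j}\bigr)^{i}\star\sigma^{v'}$, expands the latter by the quantum Chevalley formula for $QH^{*}(G/B)$, and observes that any contribution to $q_{\lambda}$ with $\lambda\neq 0$ produces a root $\gamma_{h'}$ with $\mu_{h'}=\gamma_{h'}^{\vee}$ and $\langle\chi_j,\gamma_{h'}^{\vee}\rangle>0$, contradicting $\langle\chi_j,\lambda\rangle=0$; that argument leans on the explicit shape of the Chevalley expansion and on nonnegativity of the structure constants, but needs nothing about the descent set of $u'$. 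You instead stay entirely inside the framework of Proposition \ref{propmainthminLeungLi} and Corollary \ref{corovanish}, with two inputs: the right descent set of the straight word $u'=s_{j-i+1}\cdots s_j$ is exactly $\{\alpha_j\}$, so {\upshape $\mbox{sgn}_m(u')=0$} for $m\neq j$ (a correct claim, though it deserves a one-line check at $m\in\{j+1,n,n+1\}$ since the bond at $\alpha_n$ may be non-simply-laced; the computed image of $\alpha_m$ remains a nonnegative combination of simple roots in each of types $B,C,D$ under the stated bounds on $j$); and positive definiteness of the symmetrized Cartan form, which locates some $m$ with $a_m\ge1$ and $\langle\alpha_m,\lambda\rangle\ge1$, automatically $m\neq j$ since $a_j=0$. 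From there one either kills the invariant outright by Proposition \ref{propmainthminLeungLi}(1) when $\langle\alpha_m,\lambda\rangle\ge2$, or strips off $\alpha_m^{\vee}$ via Corollary \ref{corovanish}(2) and inducts on $|\lambda|$; all the hypotheses ($\lambda-\alpha_m^\vee$ effective, nonzero, $\langle\chi_j,\cdot\rangle=0$) are preserved exactly as you say. Your route is more elementary in that it avoids the quantum Chevalley formula and positivity entirely and is of a piece with the descent arguments used throughout Section 3, at the cost of the sign computation for $u'$ and the Cartan-form observation; the paper's proof is shorter once Chevalley and positivity are granted and localizes the obstruction at $\langle\chi_j,\cdot\rangle$ without any combinatorics on $u'$.
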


\begin{proof}
Note that the product $\big(\sigma^{s_{j}}\big)^{i}:=\sigma^{s_{j}}\star
\cdots\star\sigma^{s_{j}}$ of $i$ copies of $\sigma^{s_{j}}$  is the summation
of $\sigma^{u^{\prime}}$ and other nonnegative  terms. Hence,  $\big(\sigma
^{s_{j}}\big)^{i}\star\, \sigma^{v^{\prime}}=\sigma^{u^{\prime}}\star\,
\sigma^{v^{\prime}}+(\mbox{other \textit{nonnegative} terms})$ $=  N_{u^{\prime},
v^{\prime}}^{w^{\prime}, \lambda}q_{\lambda%
}\sigma^{w^{\prime}}+ (\mbox{other \textit{nonnegative} terms})$.  On the other
hand, we have
\begin{align*}
\big(\sigma^{s_{j}}\big)^{i}\star\, \sigma^{v^{\prime}} & =\sum\limits_{\gamma
_{i}}\cdots\sum\limits_{\gamma_{1}} N_{s_{j}, v^{\prime}}^{v^{\prime}%
s_{\gamma_{1}}, \mu_{1}} N_{s_{j}, v^{\prime}s_{\gamma_{1}}}^{v^{\prime
}s_{\gamma_{1}}s_{\gamma_{2}}, \mu_{2}}\cdots N_{s_{j}, v^{\prime}%
s_{\gamma_{1}}\cdots s_{\gamma_{i-1}}}^{v^{\prime}s_{\gamma_{1}}\cdots
s_{\gamma_{i}}, \mu_{i}} q_{\mu_{1}+\cdots+\mu_{i}}\sigma^{v^{\prime}%
s_{\gamma_{1}}\cdots s_{\gamma_{i}}}\\
&  =\sum\limits_{\gamma_{i}}\cdots\sum\limits_{\gamma_{1}} \prod_{h=1}^{i}
\langle\chi_{j}, \gamma_{h}^{\vee}\rangle q_{\mu_{1}+\cdots+\mu_{i}}%
\sigma^{v^{\prime}s_{\gamma_{1}}\cdots s_{\gamma_{i}}},
\end{align*}
by the quantum Chevalley formula. Here for $1\leq h\leq i$, $\gamma_{h}\in
R^{+}$, $\mu_{h}\in\{0, \gamma_{h}^{\vee}\}$, and they satisfy  $\ell
(v^{\prime}s_{\gamma_{1}}\cdots s_{\gamma_{h}})=\ell(v^{\prime}s_{\gamma_{1}%
}\cdots s_{\gamma_{h-1}})+1 -\langle2\rho, \mu_{h}\rangle$.  If
$N_{u^{\prime}, v^{\prime}}^{w^{\prime},
\lambda}\neq 0$ for some $v', w'$, then there exists a sequence $(\gamma_{1}, \ldots,
\gamma_{i})$ such that  $\lambda=\sum_{h=1}^{i}\mu_{h}$ and
$\prod_{h=1}^{i} \langle\chi_{j}, \gamma_{h}^{\vee}\rangle\neq0$. Since $\lambda\neq0$,  there
exists $1\leq h^{\prime}\leq i$ such that $\mu_{h^{\prime}}\neq0$ and
$0=\langle\chi_{j}, \lambda\rangle= \langle\chi_{j}, \sum
_{h=1}^{i}\mu_{h}\rangle\geq\langle\chi_{j}, \mu_{h^{\prime}}\rangle
=\langle\chi_{j}, \gamma_{h^{\prime}}^{\vee}\rangle>0$. Contradiction.
\end{proof}

The next well-known fact, characterizing $\omega_{P}\omega_{P^{\prime}}$,
works for $\Delta$ of \textit{any type}.

\begin{lemma}
[See e.g. Lemma 3.5 of \cite{leungli33}]\label{charalongest}  An element $\bar
w\in W_{P}$ is equal to $\omega_{P}\omega_{P^{\prime}}$, if both of the
followings hold:  {\upshape$\mbox{(i) } \ell(\bar w)=\ell(\omega_{P}%
\omega_{P^{\prime}})$};  {\upshape $\mbox{(ii) } \bar w(\alpha)\in R^{+}$} for
all $\alpha\in\Delta_{P^{\prime}}$.
\end{lemma}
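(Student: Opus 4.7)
The plan is to recognize that condition (ii) is exactly the standard characterization of minimal length coset representatives. Specifically, for a Weyl group $W$ with a parabolic subgroup $W_J$ generated by simple reflections $\{s_\alpha : \alpha \in J\}$, an element $w$ is a minimal length representative of its coset $w W_J$ if and only if $w(\alpha) \in R^+$ for every $\alpha \in J$. I would apply this well-known fact inside $W_P$ (viewed as the Weyl group of the root subsystem generated by $\Delta_P$) with the parabolic subgroup $W_{P'}$, so condition (ii) yields that $\bar w \in W_P^{P'}$, i.e., $\bar w$ is the minimal length representative of the coset $\bar w W_{P'}$ in $W_P / W_{P'}$.

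First I would use the length-additivity property of minimal coset representatives: since $\bar w \in W_P^{P'}$, we have $\ell(\bar w \omega_{P'}) = \ell(\bar w) + \ell(\omega_{P'})$. Next I would invoke the standard identity $\ell(\omega_P \omega_{P'}) = \ell(\omega_P) - \ell(\omega_{P'})$, which holds because $\omega_{P'}$ sits inside $\omega_P$ as a reduced subexpression (equivalently, $\omega_P \omega_{P'}$ is itself the longest element of $W_P^{P'}$). Combining this with condition (i) gives
\[
\ell(\bar w \omega_{P'}) = \ell(\omega_P \omega_{P'}) + \ell(\omega_{P'}) = \ell(\omega_P).
\]

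Finally, I would observe that $\bar w \omega_{P'} \in W_P$ is an element of maximal length inside $W_P$, and since $\omega_P$ is the \emph{unique} longest element of $W_P$, we must have $\bar w \omega_{P'} = \omega_P$. Multiplying on the right by $\omega_{P'}$ and using $\omega_{P'}^2 = 1$ yields $\bar w = \omega_P \omega_{P'}$, as desired. There is no real obstacle here; the only point that requires a bit of care is the translation of (ii) into the statement that $\bar w$ is a minimal coset representative \emph{within} $W_P$, but this is immediate from the characterization of $W^J$ applied to the Weyl group $W_P$ and its parabolic subgroup $W_{P'}$.
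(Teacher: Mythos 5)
Your proof is correct. Condition (ii) is indeed the standard characterization of $\bar w$ being the minimal length representative of $\bar w W_{P'}$ inside the Coxeter group $W_P$, which gives $\ell(\bar w\,\omega_{P'}) = \ell(\bar w) + \ell(\omega_{P'})$; combined with $\ell(\omega_P\omega_{P'}) = \ell(\omega_P) - \ell(\omega_{P'})$ and condition (i), this forces $\bar w\,\omega_{P'}$ to have length $\ell(\omega_P)$, so uniqueness of the longest element of $W_P$ finishes the argument. The paper itself does not reprove the lemma (it cites Lemma 3.5 of \cite{leungli33}), but your argument is the natural one and uses only standard facts about parabolic subgroups of Coxeter groups; there is no gap.
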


In the rest of this subsection, we fix   the positive root  $\gamma:=\alpha_n+2\sum_{j=k}^{n-1}\alpha_j$. Note that  $\gamma^{\vee}=\alpha
_{k}^{\vee}+\alpha_{k+1}^{\vee}+\cdots+ \alpha_{n}^{\vee}$ and $\langle 2\rho, \gamma^\vee\rangle=2n-2k+2$.

\begin{thm}
[Classical aspects of quantum Pieri rules for Grassmannians of type $C_{n}$]\label{thmQPRforIGIG}%
${}$
Let $\sigma^{u}, \sigma^{v}$ be Schubert classes in the quantum cohomolgy of
the isotropic Grassmannian $G/P=IG(k, 2n)$.  Recall  $u=s_{k-p+1}\cdots
s_{k-1}s_{k}$, where $1\leq p\leq k$. 
 We have
\[
\sigma^{u}\star\sigma^{v}=\sigma^{u}\cup\sigma^{v} +%
\begin{cases}
t\sum_{w\in W^{P}} N_{us_{k}, vs_{\gamma}}^{ws_{1}\cdots s_{k-1}, 0}\sigma
^{w}, & \mbox{if }   \ell(vs_{\gamma})=\ell(v)-2n+2k-1\\
0, & \mbox{otherwise}
\end{cases}
.
\]

\end{thm}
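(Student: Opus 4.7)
The plan is to combine the $d\geq 2$ vanishing from Proposition \ref{propdlarger2CCC} with the Peterson--Woodward comparison formula and a careful iteration of Proposition \ref{propmainthminLeungLi} (together with its consequences, notably Corollary \ref{coridentity}) to identify the $d=1$ Gromov--Witten invariants with the advertised classical intersection numbers on $G/B$. The $d=0$ term is simply the classical cup product $\sigma^u\cup\sigma^v$, so all that remains is to compute the degree-one coefficients $N_{u,v}^{w,1}$.

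First I would pin down $\lambda_B$ and $\omega_P\omega_{P'}$ for $d=1$. Lemma \ref{lemmalambdaBfortypeCC} (with $m=0$, $r=1$) gives $\lambda_B=\sum_{j=k}^n\alpha_j^\vee=\gamma^\vee$. A direct computation of $\langle\alpha_j,\lambda_B\rangle$ for $\alpha_j\in\Delta_P$ shows that $\langle\alpha_{k-1},\lambda_B\rangle=-1$ and all other pairings vanish, so $\Delta_{P'}=\Delta_P\setminus\{\alpha_{k-1}\}$. The type-$C_{n-k}$ factor common to $W_P$ and $W_{P'}$ then cancels, and using the standard identity $\omega_0^{S_k}=(s_1\cdots s_{k-1})\,\omega_0^{S_{k-1}}$ together with Lemma \ref{charalongest}, one obtains $\omega_P\omega_{P'}=s_1s_2\cdots s_{k-1}$. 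By Proposition \ref{comparison}, $N_{u,v}^{w,1}=N_{u,v}^{\tilde w,\gamma^\vee}$ with $\tilde w=w\,s_1\cdots s_{k-1}$.

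The substantive step is then to establish $N_{u,v}^{\tilde w,\gamma^\vee}=N_{us_k,vs_\gamma}^{\tilde w,0}$, with vanishing whenever the length condition $\ell(vs_\gamma)=\ell(v)-2n+2k-1$ fails. An inversion-set computation shows $\mbox{sgn}_{\alpha_k}(u)=1$ but $\mbox{sgn}_{\alpha}(u)=0$ for every other simple $\alpha$, so the natural first move is equation (2.1) of Proposition \ref{propmainthminLeungLi} at $\alpha=\alpha_k$. After checking that $\mbox{sgn}_k(v)=\mbox{sgn}_k(\tilde w)=1$ (otherwise Corollary \ref{corovanish} immediately produces vanishing), this produces $N_{us_k,vs_k}^{\tilde w,\gamma^\vee-\alpha_k^\vee}$. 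Successive applications of (2.1)--(2.3), the symmetry $N_{u,v}^{w,\lambda}=N_{v,u}^{w,\lambda}$, and Corollary \ref{coridentity} then strip off $\alpha_{k+1}^\vee,\ldots,\alpha_n^\vee$ from the degree while right-multiplying $v$ by the remaining letters of the palindromic reduced word $s_\gamma=s_k s_{k+1}\cdots s_{n-1}s_n s_{n-1}\cdots s_k$; any intermediate excursions of $\tilde w$ get restored through the classical identity in Corollary \ref{coridentity} at the final stage.

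The main obstacle will be the sgn bookkeeping along this chain: each of (2.1)--(2.3) imposes precise sgn equalities that must be re-verified after each transformation. I will use Lemma \ref{lemmaprodAtype} repeatedly to rewrite products in the type-$A$ subsystem, and Lemma \ref{lemmasgnV=0} to recover sgn-zero information for elements in $W^P$. The length condition $\ell(vs_\gamma)=\ell(v)-2n+2k-1$ is equivalent to the statement that $v$ admits a reduced expression with suffix the palindromic word of $s_\gamma$; whenever it is violated, Corollary \ref{corovanish} applied with some $\alpha_j\in\{\alpha_k,\ldots,\alpha_n\}$ kills the invariant. I expect the Lagrangian boundary $k=n$, where $\gamma=\alpha_n$ is simple and $\langle\alpha_n,\gamma^\vee\rangle=2$, to require slightly different handling, combining (2.1) at $\alpha_n$ with the vanishing from Lemma \ref{lemvanforCCCkknn}.
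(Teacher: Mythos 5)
You have correctly identified $\lambda_B=\gamma^\vee$, $\omega_P\omega_{P'}=s_1\cdots s_{k-1}$, the role of Proposition~\ref{propdlarger2CCC}, and the sgn data of $u$. But the core of your plan --- transforming $N_{u,v}^{ws_1\cdots s_{k-1},\,\gamma^\vee}$ into $N_{us_k,\,vs_\gamma}^{ws_1\cdots s_{k-1},\,0}$ by iterating equations \eqref{eq1}--\eqref{eq3}, the symmetry $N_{u,v}^{w,\lambda}=N_{v,u}^{w,\lambda}$, and Corollary~\ref{coridentity} --- is not achievable when $k<n$, for a structural reason. Every move in that toolkit right-multiplies \emph{exactly two} of the three Weyl elements $u,v,w$ by the \emph{same} simple reflection $s_\alpha$. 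Consequently, any composite transformation reachable this way has the form $(u,v,w)\mapsto(ux,vy,wz)$ with $\bar x+\bar y+\bar z=0$ in the abelianization $W^{\mathrm{ab}}\cong(\mathbb{Z}/2)^2$ of the $C_n$ Weyl group, whose two coordinates record the parities of short and long simple reflections. The target $(x,y,z)=(s_k,\,s_\gamma,\,1)$ fails this test: $s_k$ is a short-root reflection while $s_\gamma$ is a long-root reflection (since $\gamma=\alpha_n+2\sum_{j=k}^{n-1}\alpha_j$ is a long root), so $\bar s_k+\bar s_\gamma+\bar 1=(1,0)+(0,1)+(0,0)\neq(0,0)$. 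Your remark that ``any intermediate excursions of $\tilde w$ get restored through Corollary~\ref{coridentity} at the final stage'' is precisely the impossible step: any move restoring $w$ necessarily moves $v$ or $u$ by the same word, so $w$ cannot return to $ws_1\cdots s_{k-1}$ while $u$ and $v$ land exactly on $us_k$ and $vs_\gamma$.

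This is exactly why the paper does \emph{not} iterate Proposition~\ref{propmainthminLeungLi} here. It instead uses associativity: replace $\sigma^u$ by $\sigma^{us_k}\star\sigma^{s_k}$ (the extra term $\sigma^{s_kus_k}$ from the Chevalley expansion is shown to contribute nothing in degree $\gamma^\vee$ by an sgn argument at $\alpha_k,\ldots,\alpha_n$), apply the quantum Chevalley formula to isolate the unique degree-$\gamma^\vee$ term $N_{s_k,v}^{vs_\gamma,\gamma^\vee}\,q_{\gamma^\vee}\sigma^{vs_\gamma}$ in $\sigma^{s_k}\star\sigma^v$, and invoke Lemma~\ref{lemforBBBCCCDD} (with $j=k-1$) to show $\sigma^{us_k}$ contributes no further $q$-degree. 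Your direct iteration is perfectly legitimate and does produce a valid classical intersection number --- the remark following the theorem records exactly what it yields, namely $N_{u',v'}^{w',0}$ with $u'=s_{k-p+1}\cdots s_n$, $v'=vs_k\cdots s_n$, $w'=ws_1\cdots s_{k-1}s_{k+1}\cdots s_n\,s_k\cdots s_n$ --- but that is a \emph{different} expression from the one in the theorem statement, and identifying the two would require an additional argument that your proposal does not provide.
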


\begin{proof}
Due to Proposition \ref{propdlarger2CCC}, we have $\sigma^{u}\star\sigma
^{v}=\sigma^{u}\cup\sigma^{v} + t\sum_{w\in W^{P}} N_{u, v}^{w, 1}\sigma^{w}$.
Thus by the Peterson-Woodward comparison formula, it suffices to compute the
Gromov-Witten invariants $N_{u, v}^{w, 1}=N_{u, v}^{w\omega_{P}\omega
_{P^{\prime}}, \lambda_{B}}$ with respect to $\lambda_{P}=\alpha_{k}^{\vee
}+Q^{\vee}_{P}$.  By Lemma \ref{lemmalambdaBfortypeCC}, we have $\lambda
_{B}=\sum_{j=k}^{n}\alpha_{j}^{\vee}=\gamma^{\vee}$, so that $\Delta
_{P^{\prime}}=\Delta_{P}\setminus\{\alpha_{k-1}\}$. Consequently, we have
$\ell(\omega_{P}\omega_{P^{\prime}})=|R_{P}^{+}|-|R_{P^{\prime}}^{+}|=k-1$.
Hence, we conclude $\omega_{P}\omega_{P^{\prime}}=s_{1}s_{2}\cdots s_{k-1}$ by
(easily) checking the assumptions in Lemma \ref{charalongest}. Therefore,  it
is sufficient to compute $N_{u, v}^{ws_{1}\cdots s_{k-1}, \gamma^{\vee}%
}q_{\gamma^{\vee}}\sigma^{ws_1\cdots s_{k-1}}$ in the product $\sigma^{u}\star_{B} \sigma^{v}$
in $QH^{*}(G/B)$.
By abuse of notations, we simply denote ``$\star_{B}$" as ``$\star$" here. We claim

\begin{enumerate}

\item the contribution $N_{u, v}^{ws_{1}\cdots s_{k-1}, \gamma^{\vee}}$ for
$q_{\gamma^{\vee}}\sigma^{ws_{1}\cdots s_{k-1}}$ from $\sigma^{u}\star
\sigma^{v}$ is the same as the contribution $N_{us_{k}, s_{k}, v}%
^{ws_{1}\cdots s_{k-1}, \gamma^{\vee}}$ for $q_{\gamma^{\vee}}\sigma
^{ws_{1}\cdots s_{k-1}}$ from $\sigma^{us_{k}}\star\sigma^{s_{k}}\star
\sigma^{v}$;

\item $N_{us_{k}, s_{k}, v}^{w^{\prime}, \gamma^{\vee}}=N_{s_{k},
v}^{vs_{\gamma}, \gamma^{\vee}} \cdot N_{us_{k}, vs_{\gamma}%
}^{w^{\prime}, 0}$, for any $w^{\prime}\in W$.
\end{enumerate}

\noindent Assuming these claims, if $\ell(vs_{\gamma})\neq\ell(v)+1-\langle2\rho, \gamma^{\vee}\rangle$, then
$N_{s_{k}, v}^{vs_{\gamma}, \gamma^{\vee}} =0$. As a consequence,  $N_{u,
v}^{w, 1}=N_{u, v}^{ws_{1}\cdots s_{k-1}, \gamma^{\vee}}=0\cdot N_{us_{k},
vs_{\gamma}^{\vee}}^{ws_{1}\cdots s_{k-1}, 0}=0$ for any $w\in W^{P}$. Hence,
$\sigma^{u}\star_{P}\sigma^{v}=\sigma^{u}\cup\sigma^{v}$.  If $\ell
(vs_{\gamma})= \ell(v)+1-\langle2\rho, \gamma^{\vee}\rangle$, then we have
$N_{s_{k}, v}^{vs_{\gamma}, \gamma^{\vee}}=\langle\chi_{k}, \gamma^{\vee
}\rangle=1$, by the quantum Chevalley formula. Thus  $N_{u, v}^{ws_{1}\cdots
s_{k-1}, \gamma^{\vee}}=N_{us_{k}, vs_{\gamma}}^{ws_{1}\cdots s_{k-1}, 0}$.
In addition, we note $\ell(v)+1-\langle2\rho, \gamma^{\vee
}\rangle=\ell(v)-2n+2k-1$.
Hence, we are done.

It remains to show claims (1) and (2). If $\ell(u)=1$, then $us_{k}=1$ and we
are done. Thus we assume $\ell(u)>1$ in the rest, and show claim (1) first.
Note that $u=s_{k-p+1}\cdots s_{k-1}s_{k}$ is of length $p$. By the quantum
Chevalley formula, we have $\sigma^{us_{k}}\star\sigma^{s_{k}}=\sigma
^{u}+\sigma^{s_{k}us_{k}}$.  It suffices to show  $\sigma^{s_{k}us_{k}}%
\star\sigma^{v}$ makes no contribution for $q_{\gamma^{\vee}}\sigma
^{ws_{1}\cdots s_{k-1}}$. Indeed, we have $\mbox{sgn}_{j}(s_{k}us_{k})=0$
whenever $j\geq k$, by noting  $s_{k}us_{k}(\alpha_{j})=s_{k}s_{k-p+1}\cdots
s_{k-2}s_{k-1}(\alpha_{j})\in R^{+}$.  Since $\langle\alpha_{k}, \gamma^{\vee
}\rangle=1$, $N_{s_{k}us_{k}, v}^{ws_{1}\cdots s_{k-1}, \gamma^{\vee}}=0$
follows if  $N_{s_{k}us_{k}, vs_{k}}^{ws_{1}\cdots s_{k-1}s_{k}, \gamma^{\vee
}-\alpha_{k}^{\vee}}=0$, by Corollary \ref{corovanish} (2). Repeating this
reduction, it suffices to show $N_{s_{k}us_{k}, vs_{k}\cdots s_{n-1}}%
^{ws_{1}\cdots s_{n-1}, \alpha_{n}^{\vee}}=0$, which does follow by using
Proposition \ref{propmainthminLeungLi} (1) with respect to $\mbox{sgn}_{n}$.
Thus claim (1) follows.

The contribution $N_{us_{k}, s_{k}, v}^{w^{\prime}, \gamma^{\vee}}$ for
$q_{\gamma^{\vee}}\sigma^{w^{\prime}}$ from   $\sigma^{us_{k}}\star\sigma
^{s_{k}}\star\sigma^{v}=(\sigma^{s_{k}}\star\sigma^{v})\star \sigma^{us_{k}}$ is given by $N_{us_{k}, s_{k}, v}^{w^{\prime}, \gamma^{\vee}}=\sum_{w^{\prime\prime}\in W,
\lambda\in Q^{\vee}} N_{s_{k}, v}^{w^{\prime\prime}, \lambda}N_{w^{\prime
\prime}, us_{k}}^{w^{\prime}, \gamma^{\vee}-\lambda}$ (which contains only
finitely many nonzero terms).  Hence, claim (2) becomes a direct consequence
of the quantum Chevalley formula and Lemma \ref{lemforBBBCCCDD}.
\end{proof}

\begin{remark}
Using Proposition \ref{propmainthminLeungLi}, we can also show $N_{u,
v}^{ws_{1}\cdots s_{k-1}, \gamma^{\vee}}=N_{u^{\prime}, v'%
}^{w^{\prime}, 0}$ where $u^{\prime}=s_{k-p+1}\cdots s_{n}$, $v'=vs_{k}\cdots s_{n}$ and $w^{\prime
}=ws_{1}\cdots s_{k-1}s_{k+1}\cdots s_{n} s_{k}\cdots s_{n}$. As a consequence, we
 can apply   a
generalized classical Pieri rule given by Bergeron and Sottile
\cite{bergSottile} to express $N_{u^{\prime}, v'%
}^{w^{\prime}, 0}$ more explicitly.
\end{remark}

\begin{example}
 For $X=IG(2, 8)$, we take $u=s_1s_2, v=s_3s_4s_3s_1s_2$ and $w=\mbox{id}$. Then  $vs_\gamma=vs_2s_3s_4s_3s_2
   =s_1s_2$, so that $\ell(v)=2\neq 0=\ell(v)-2\cdot 4+2\cdot 2-1$. 
   Thus $N_{u, v}^{w, 1}=0$.
   (In terms of notations in Example 1.3 of \cite{BKT2}, we have $\sigma^u=\sigma_{1, 1}, \sigma^v=\sigma_{4, 1}$ and
     $N_{u, v}^{w, 1}=\langle \sigma_{1, 1}, \sigma_{4, 1}, \sigma_{6, 5}\rangle_{1}$.)
\end{example}


  Denote by $\tilde
P\supset B$ the  parabolic subgroup corresponding to the subset $\Delta
\setminus\{\alpha_{k-1}\}$. That is, $G/\tilde P=IG(k-1, 2n)$.   (When $k=1$, we mean $\tilde
P=G$, i.e. $W^{\tilde P}=\{\mbox{id}\}$.) 
Recall that $\gamma^\vee=\sum_{j=k}^n\alpha_j^\vee$ so that  $\ell(v)+1-\langle2\rho, \gamma^{\vee
}\rangle=\ell(v)-2n+2k-1$.

\begin{lemma}
\label{lemTFAE}  For any  $v\in W^{P}$, the following are equivalent:  {\upshape
\[
\mbox{(a) } \ell(vs_{\gamma})=\ell(v)+1-\langle2\rho, \gamma^{\vee}\rangle;
\qquad\mbox{(b) } vs_{\gamma}(\alpha_{k})\in R^{+};\qquad\mbox{(c) }
vs_{\gamma}\in W^{\tilde P}.
\]
}
\end{lemma}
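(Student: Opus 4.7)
The plan is to prove the implications (a) $\Rightarrow$ (b), (b) $\iff$ (c), and (c) $\Rightarrow$ (a).

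First I would compute, using the Cartan matrix of type $C_n$, that $\langle\alpha_j,\gamma^\vee\rangle=0$ for $j\notin\{k-1,k\}$, while $\langle\alpha_{k-1},\gamma^\vee\rangle=-1$ and $\langle\alpha_k,\gamma^\vee\rangle=1$ (respectively $2$ when $k=n$). In particular $s_\gamma$ fixes every $\alpha_j$ with $j\notin\{k-1,k\}$. The implication (a) $\Rightarrow$ (b) is then immediate from Lemma \ref{lengthofpos222} applied with $\alpha_j=\alpha_k$: since $\langle\alpha_k,\gamma^\vee\rangle>0$, the lemma yields $\ell(vs_\gamma s_k)=\ell(vs_\gamma)+1$, which is the same as $vs_\gamma(\alpha_k)\in R^+$.

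For (b) $\iff$ (c), recall $\Delta_{\tilde P}=\Delta\setminus\{\alpha_{k-1}\}$, so $vs_\gamma\in W^{\tilde P}$ means $vs_\gamma(\alpha_j)\in R^+$ for every $j\neq k-1$. When $j\notin\{k-1,k\}$ this condition collapses to $v(\alpha_j)\in R^+$, which holds because $v\in W^P$. Only the condition at $j=k$ is nontrivial, and that is precisely (b).

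The main obstacle is (c) $\Rightarrow$ (a). For this I plan to use the general length identity
\[
\ell(vs_\gamma)=\ell(v)+\ell(s_\gamma)-2\,|\{\beta\in N:v(\beta)\in R^-\}|,
\]
where $N:=\{\beta\in R^+:s_\gamma(\beta)\in R^-\}$ has cardinality $\ell(s_\gamma)=\langle 2\rho,\gamma^\vee\rangle-1$. This reduces (a) to the set-theoretic inclusion $N\subseteq\{\beta\in R^+:v(\beta)\in R^-\}$. Writing $\gamma=2e_k$ in standard coordinates for type $C_n$, explicit calculation gives
\[
N=\{2e_k\}\cup\{e_k-e_j:k<j\leq n\}\cup\{e_k+e_j:k<j\leq n\}.
\]
Translating (c) into the signed permutation realization of $W$, combined with the ascending constraints coming from $v\in W^P$, forces $v(e_k)=-e_p$ and $v(e_j)=e_{q_j}$ for $j>k$, with $0<p<q_{k+1}<\cdots<q_n$. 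A short case check then verifies $v(\beta)\in R^-$ for every $\beta\in N$, completing the argument. The Lagrangian case $k=n$ collapses to $s_\gamma=s_n$ and is immediate from the definitions.
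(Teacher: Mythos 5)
Your proof is correct, and the implications (a) $\Rightarrow$ (b) (via Lemma \ref{lengthofpos222}) and (b) $\Leftrightarrow$ (c) (via $s_\gamma(\alpha_j)=\alpha_j$ for $j\notin\{k-1,k\}$) follow the paper exactly. Where you diverge is in the hard direction (c) $\Rightarrow$ (a). The paper's argument is root-theoretic and coordinate-free: it first deduces $v(\alpha_k)\in -R^+$ from $v\in W^P$ and $v\neq 1$, then writes $s_\gamma=s_{\beta_1}\cdots s_{\beta_{2n-2k+1}}$ via the word $(\alpha_k,\dots,\alpha_n,\dots,\alpha_k)$ and shows, by a case analysis on $s_{\beta_1}\cdots s_{\beta_{j-1}}(\beta_j)$, that length increases by one at each step when passing from $vs_\gamma$ back to $v$. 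You instead invoke the involution-set formula $\ell(vs_\gamma)=\ell(v)+\ell(s_\gamma)-2\,|N(s_\gamma)\cap N(v)|$, reducing (a) to the containment $N(s_\gamma)\subseteq N(v)$, then verify this containment after explicitly computing $N(s_\gamma)=\{2e_k\}\cup\{e_k\pm e_j:k<j\leq n\}$ and pinning down $v(e_k)=-e_p$, $v(e_j)=e_{q_j}$ ($j>k$) with $p<q_{k+1}<\cdots<q_n$ from the signed-permutation model of $W(C_n)$. Both routes are sound; yours is more concrete and may be more transparent for readers comfortable with the $C_n$ coordinate realization, while the paper's is type-uniform in spirit and avoids committing to an explicit model. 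One minor remark: your derivation of the form of $v$ relies on $vs_\gamma(\alpha_k)>0$, which is condition (b); since you have already established (b) $\Leftrightarrow$ (c), this is fine, but it is worth making explicit that you are really proving (b) $\Rightarrow$ (a) and closing the cycle that way.
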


\begin{proof}
Note that $\gamma^{\vee}=\sum_{j=k}^{n}%
\alpha_{j}^{\vee}=s_{k}s_{k+1}\cdots s_{n-1}(\alpha_{n}^{\vee})$. We conclude
that $s_{\gamma}=s_{k}s_{k+1}\cdots s_{n}\cdots s_{k+1}s_{k}$ and
$\ell(s_{\gamma})=\langle2\rho, \gamma^{\vee}\rangle-1=2n-2k+1$.

Suppose assumption (a) holds first. Note that $\langle\alpha_{k}, \gamma
^{\vee}\rangle>0$. By Lemma \ref{lengthofpos222}, we have $\ell(vs_{\gamma
}s_{k})=\ell(vs_{\gamma})+1$. Hence, $vs_{\gamma}(\alpha_{k})\in R^{+}$. That
is, assumption (b) holds.

Assume (b) holds now. Note that $\langle\alpha_{i}, \gamma^{\vee}\rangle=0$
for any $\alpha_{i}\in\Delta\setminus\{\alpha_{k-1}, \alpha_{k}\}$, so that
$vs_{\gamma}(\alpha_{i})=v(\alpha_{i})\in R^{+}$. Hence, (c) follows.

Assume (c) holds, equivalently, $vs_{\gamma}(\alpha_{i})\in R^{+}$ for all
$\alpha_{i}\in\Delta\setminus\{\alpha_{k-1}\}$. Then we have $v\neq1$, because otherwise  $vs_{\gamma}%
(\alpha_{k})=\alpha_{k}-\langle\alpha_{k}, \gamma^{\vee}\rangle\gamma\notin
R^{+}$. As a consequence,
 we have $v(\alpha_{k})\in-R^{+}$ (as $v\in W^P$). Rewrite $(\alpha_{k}, \cdots, \alpha_{n}, \cdots, \alpha_{k})$  as
$(\beta_{1}, \cdots, \beta_{2n-2k+1})$.  To show (a), it suffices to show
$\ell(vs_{\gamma}s_{\beta_{1}}\cdots s_{\beta_{j-1}}s_{\beta_{j}})=
\ell(vs_{\gamma}s_{\beta_{1}}\cdots s_{\beta_{j-1}})+1$ (or equivalently to
show $vs_{\gamma}s_{\beta_{1}}\cdots s_{\beta_{j-1}}(\beta_{j})\in R^{+}$) for
all  $1\leq j\leq2n-2k+1$. Since $\alpha_{k}=\beta_{1}$, this holds when
$j=1$. When $2\leq j\leq2n-2k$, we note that  $s_{\beta_{1}}\cdots
s_{\beta_{j-1}}(\beta_{j})=\alpha_{k}+\beta$ for a positive root $\beta$ in the root
subsystem with respect to the subbase $\{\alpha_{k+1}, \cdots, \alpha_{n}\}$.
Thus $vs_{\gamma}s_{\beta_{1}}\cdots s_{\beta_{j-1}}(\beta_{j})=vs_{\gamma
}(\alpha_{k})+vs_{\gamma}(\beta)=vs_{\gamma}(\alpha_{k})+v(\beta)\in R^{+}$.
When $j=2n-2k+1$, we have $vs_{\gamma}s_{\beta_{1}}\cdots s_{\beta_{j-1}%
}(\beta_{j})=vs_{\gamma}s_{\gamma}s_{\beta_{2n-2k+1}}(\beta_{2n-2k+1}%
)=-v(\beta_{2n-2k+1})=-v(\alpha_{k})\in R^{+}$. Thus we are done.
\end{proof}

Thanks to the above lemma,  the assumption  ``$\ell(vs_{\gamma})=\ell(v)-2n+2k-1$" in Theorem \ref{thmQPRforIGIG} is equivalent to
the assumption ``$vs_{\gamma}\in W^{\tilde P}$". This indicates
us that $N_{u, v}^{w, 1}=N_{us_{k}, vs_{\gamma}}^{ws_{1}\cdots s_{k-1}, 0}$ is a classical
intersection number involved in the cup product $\sigma^{s_{k-p+1}\cdots
s_{k-1}}\cup\sigma^{vs_{\gamma}}$ in $H^{*}(IG(k-1, 2n))$. As a consequence,
the classical Pieri rule in \cite{pragrat} can still be applied. In
particular, we can reformulate Theorem \ref{thmQPRforIGIG} in a more
traditional way, which will be described in  Theorem \ref{thmQPRforIGIG222}.

\subsection{Classical aspects of quantum Pieri rules for Grassmannians of type $B, D$}

Throughout this subsection, we consider a Grassmannian of type $B_{n}$ or
$D_{n+1}$. Precisely, we consider  the isotropic Grassmannian $G/P=OG(k,
2n+1)$ (resp. $OG(k, 2n+2)$) for $\Delta$ of type $B_{n}$ (resp. $D_{n+1}$).
Note that the even orthogonal Grassmannian $OG^{o}(n+1, 2n+2)$ is isomorphic
to the odd orthogonal Grassmannian $OG(n, 2n+1)$. It suffices to deal with
either of them only. Hence, when $\Delta$ is of $D_{n+1}$-type, we can always
assume $k\leq n-1$. In other words, whenever referring to ``$k=n$", we are
dealing with  $\Delta$ of $B_{n}$-type, unless otherwise stated. As before, we
need to compute the  Gromov-Witten invariants $N_{u, v}^{w, d}=N_{u, v}^{w,
\lambda_{P}}=N_{u, v}^{\tilde w, \lambda_{B}}$ (where $\lambda_{P}=d\alpha
_{k}^{\vee}+Q_{P}^{\vee}$)  for the quantum product $\sigma^{u}\star\sigma
^{v}$ in $QH^{*}(G/P)$.

Let $[x]$ denote the integer satisfying $0\leq x-[x]<1$. In order to state the
results uniformly, we denote
\[
\bar\alpha_{n}^{\vee}:=\alpha_{n}^{\vee}\mbox{ (resp. } \alpha_{n}^{\vee
}+\alpha_{n+1}^{\vee})\quad\mbox{ and }\quad s_{\bar\alpha_{n}}:= s_{n}
\mbox{ (resp. } s_{n}s_{n+1})
\]
when $\Delta$ is of type $B_{n}$ (resp. $D_{n+1}$). Furthermore, we denote
$D=d$ (resp. $2d$) if $k<n$ (resp. $k=n$).

With the same arguments as for Lemma \ref{lemmalambdaBfortypeCC}, we have

\begin{lemma}
\label{lambdaBfortypeBBDD}  Write $D=mk+r$ where $m, r\in\mathbb{Z}$ with
$1\leq r\leq k$. Then we have
\[
\lambda_{B}=m\sum_{j=1}^{k-1}j\alpha_{j}^{\vee}+\sum_{j=1}^{r-1}%
j\alpha_{k-r+j}^{\vee}+  d\alpha_{k}^{\vee}+%
\begin{cases}
2[{\frac{d}{2}}]\sum_{j=k+1}^{n-1}\alpha_{j}^{\vee}+[{\frac{d}{2}}]\bar
\alpha_{n}^{\vee} & \mbox{if } k<n\\
0 & \mbox{if } k=n
\end{cases}
.
\]
Consequently, we have  $\langle\alpha_{k}, \lambda_{B}\rangle=m+1+D-2[{\frac
{D}{2}}]$; for $k+1\leq n$,   $\langle\alpha_{k+1}, \lambda_{B}\rangle=-d+2[{\frac
{d}{2}}]$; for   $1\leq i\leq k-1$,   $\langle\alpha_{i}, \lambda_{B}\rangle=-1$ if $i=k-r$, or $0$ otherwise.
\end{lemma}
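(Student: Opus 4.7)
The strategy parallels that of Lemma \ref{lemmalambdaBfortypeCC}. Let $\lambda'$ denote the right-hand side of the displayed formula; by the uniqueness of $\lambda_B$ in the Peterson--Woodward formula (Proposition \ref{comparison}), it suffices to check (i) $\lambda' - d\alpha_k^\vee \in Q_P^\vee$, so that $\lambda'$ and $\lambda_P = d\alpha_k^\vee + Q_P^\vee$ represent the same coset; and (ii) $\langle \gamma, \lambda' \rangle \in \{0,-1\}$ for every $\gamma \in R_P^+$. Property (i) is immediate because every coroot appearing in $\lambda'$ other than $\alpha_k^\vee$ is indexed by some $j \neq k$, hence lies in $Q_P^\vee$.

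For (ii), I use that $\Delta_P = \Delta \setminus \{\alpha_k\}$ splits into two (possibly empty) connected components: a type $A_{k-1}$ piece on $\{\alpha_1, \ldots, \alpha_{k-1}\}$ and a second piece on $\{\alpha_{k+1}, \ldots\}$, which is of type $B_{n-k}$ when $G$ has type $B_n$ with $k<n$, of type $D_{n-k+1}$ when $G$ has type $D_{n+1}$ with $k \leq n-2$, of type $A_1 \times A_1$ when $G$ has type $D_{n+1}$ with $k = n-1$, and empty when $k = n$. Accordingly, $R_P^+$ is the disjoint union of the positive roots of these two subsystems, so the bound can be verified on each side separately.

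On the $A_{k-1}$ side, a direct Cartan-integer computation of the type carried out in Lemma \ref{lemmalambdaBfortypeCC} yields $\langle \alpha_i, \lambda' \rangle = -1$ for $i = k-r$ and $\langle \alpha_i, \lambda' \rangle = 0$ otherwise, for $1 \leq i \leq k-1$; since every positive root in $A_{k-1}$ has the form $\alpha_i + \cdots + \alpha_j$ with coefficients in $\{0,1\}$, its pairing with $\lambda'$ equals $-1$ or $0$ according to whether $k - r \in [i,j]$. On the other side, the contributions come from $d\alpha_k^\vee$, from $2[d/2]\sum_{j=k+1}^{n-1}\alpha_j^\vee$, and from $[d/2]\bar{\alpha}_n^\vee$; using the Cartan integers (recalling $\langle \alpha_{n-1}, \alpha_n^\vee \rangle = -2$ and $\langle \alpha_n, \alpha_{n-1}^\vee \rangle = -1$ in type $B_n$, while in type $D_{n+1}$ all edges are simple with $\alpha_n, \alpha_{n+1}$ the two tips of the fork at $\alpha_{n-1}$) one obtains $\langle \alpha_{k+1}, \lambda' \rangle = -d + 2[d/2] \in \{0,-1\}$ and $\langle \alpha_i, \lambda' \rangle = 0$ for every remaining simple root on this side. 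Since $\alpha_{k+1}$ is an endpoint of the Dynkin diagram of the right-hand subsystem, it appears with coefficient at most $1$ in any positive root there, and hence $\langle \gamma, \lambda' \rangle \in \{0,-1\}$ on this side as well.

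This completes the identification $\lambda_B = \lambda'$. The three consequent identities---$\langle \alpha_k, \lambda_B \rangle = m+1+D-2[D/2]$, $\langle \alpha_{k+1}, \lambda_B \rangle = -d + 2[d/2]$ for $k+1 \leq n$, and the stated values of $\langle \alpha_i, \lambda_B \rangle$ for $1 \leq i \leq k-1$---then follow by substituting $\lambda'$ into the pairings and using $mk + r = D$; the first identity is the type-$B$/$D$ analogue of Lemma \ref{lemmaAlphakLambdaB}. The main technical obstacle is the case-by-case bookkeeping (type $B$ vs.\ type $D$; $k = n$ with empty right subsystem; $k = n-1$ in type $D_{n+1}$ where the right subsystem degenerates to $A_1 \times A_1$; $r = k$ where $k - r$ falls outside $\Delta_P$), each resolved by the appropriate rank-two Cartan integer between adjacent simple roots.
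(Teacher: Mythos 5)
Your proposal is correct and follows the approach the paper intends: compute $\langle\alpha_i,\lambda'\rangle$ for each $\alpha_i\in\Delta_P$, verify the values lie in $\{0,-1\}$, and invoke the uniqueness clause of Proposition \ref{comparison}; the paper simply cites ``the same arguments as for Lemma \ref{lemmalambdaBfortypeCC}.'' Your added observation that $\alpha_{k+1}$ occurs with coefficient at most one in any positive root of the right-hand component of $\Delta_P$ is a detail that is genuinely needed here (and vacuous in the type-$C$ Lemma \ref{lemmalambdaBfortypeCC}, where $\langle\alpha_{k+1},\lambda'\rangle=0$ already), and your separate handling of the degenerate $A_1\times A_1$ subsystem when $G$ has type $D_{n+1}$ and $k=n-1$ correctly accounts for the one place where ``all simple roots on that side besides $\alpha_{k+1}$ pair to zero'' fails.
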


Recall $u=s_{k-p+1}\cdots s_{k-1}s_k$ where $1\leq p\leq k$.

\begin{lemma}
\label{lemmavanish11forBD}  If $D\geq k+1$, then we have $N_{u, v}^{w, d}=0$
for any $w\in W^{P}$.
\end{lemma}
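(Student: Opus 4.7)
The plan closely parallels the type $C$ proof of Lemma \ref{lemmavanishforCCC}. Write $D=mk+r$ with $1\le r\le k$, so the hypothesis $D\ge k+1$ forces $m\ge 1$. By Lemma \ref{lambdaBfortypeBBDD}, $\langle \alpha_k,\lambda_B\rangle=m+1+D-2[D/2]$, which equals $m+1$ when $D$ is even and $m+2$ when $D$ is odd. The strategy is to split on whether this pairing is already $\ge 3$ (easy case) or equals $2$ (work needed).

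Whenever $\langle\alpha_k,\lambda_B\rangle\ge 3$ — i.e.\ $m\ge 2$, or $m=1$ with $D$ odd — we are done at once: since $\mbox{sgn}_{\alpha_k}$ takes values in $\{0,1\}$, we have $\mbox{sgn}_{\alpha_k}(\tilde w)+\langle\alpha_k,\lambda_B\rangle\ge 3>2\ge \mbox{sgn}_{\alpha_k}(u)+\mbox{sgn}_{\alpha_k}(v)$, and Proposition \ref{propmainthminLeungLi}(1) applied with $\alpha_k$ gives $N_{u,v}^{\tilde w,\lambda_B}=0$. This disposes of all cases except $m=1$ with $D$ even, which yields $\langle \alpha_k,\lambda_B\rangle=2$.

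For the remaining case, assume first $k<n$. Mimicking the type $C$ argument, the simple root $\alpha_{k+1}\in\Delta_P$ gives $\mbox{sgn}_{k+1}(v)=0$ by Lemma \ref{lemmasgnV=0}; and $us_k=s_{k-p+1}\cdots s_{k-1}$ clearly fixes $\alpha_{k+1}$, so $\mbox{sgn}_{k+1}(us_k)=0$ as well. Lemma \ref{lambdaBfortypeBBDD} also yields $\langle\alpha_{k+1},\lambda_B\rangle=-d+2[d/2]=0$ (since $d=D$ is even), hence $\langle\alpha_{k+1},\lambda_B-\alpha_k^\vee\rangle=1$. Proposition \ref{propmainthminLeungLi}(1) applied with $\alpha_{k+1}$ then forces $N_{us_k,v}^{\tilde ws_k,\lambda_B-\alpha_k^\vee}=0$, and Corollary \ref{corovanish}(1) propagates this to $N_{u,v}^{\tilde w,\lambda_B}=0$.

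The edge case $k=n$ (necessarily in type $B_n$, by our standing assumption $k\le n-1$ for type $D$) is where I expect the main obstacle: $\alpha_{k+1}$ does not exist, so the previous trick is unavailable. One route is to note that $D=2d$ and the dimension count essentially rule this case out when $n\ge 2$, since $\deg t=2n$ together with $\ell(u)+\ell(v)\le n+n(n+1)/2$ force $D\le(n+3)/2<k+1$; the very small rank case can be checked by hand. A more uniform approach is to iterate Corollary \ref{corovanish} down the tail of the Dynkin diagram: apply Corollary \ref{corovanish}(1) at $\alpha_n$ to reduce to $N_{u,vs_n}^{\tilde ws_n,\lambda_B-\alpha_n^\vee}$, then use the pairings $\langle\alpha_{n-1},\lambda_B-\alpha_n^\vee\rangle$ and the identities of Lemma \ref{lambdaBfortypeBBDD} to continue peeling off reflections. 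Keeping track of how the short-vs-long asymmetry at $\alpha_n$ interacts with the parity and size of $r$ is the delicate point; the analysis naturally splits into subcases according to $r=1$ versus $r\ge 2$, so I would treat these separately and show that each reduction step's sign hypothesis is met.
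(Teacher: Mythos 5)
Your proof is correct. For $k<n$ the argument is the same as the paper's: after disposing of the cases $\langle\alpha_k,\lambda_B\rangle\geq 3$ by Proposition~\ref{propmainthminLeungLi}(1), one uses $\mbox{sgn}_{k+1}(us_k)=\mbox{sgn}_{k+1}(v)=0$ and $\langle\alpha_{k+1},\lambda_B-\alpha_k^\vee\rangle=1$ to kill $N_{us_k,v}^{\tilde w s_k,\lambda_B-\alpha_k^\vee}$, and then Corollary~\ref{corovanish}(1) finishes.

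For $k=n$ your argument genuinely diverges from the paper's. The paper runs another sign comparison, this time at $\alpha_{n-1}$: it claims $\langle\alpha_{n-1},\lambda_B\rangle=0$, hence $\langle\alpha_{n-1},\lambda_B-\alpha_n^\vee\rangle=2$, and shows $N_{us_n,v}^{\tilde w s_n,\lambda_B-\alpha_n^\vee}=0$ before invoking Corollary~\ref{corovanish}(1). That step tacitly uses $r\geq 2$ (Lemma~\ref{lambdaBfortypeBBDD} gives $\langle\alpha_{n-1},\lambda_B\rangle=-1$ precisely when $r=1$, i.e.\ $D=n+1$, which can occur with $D=2d$ even only for $n$ odd). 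Your route 1 — the dimension count — sidesteps this entirely: since $\deg t=2n$, $\ell(u)\leq n$ and $\ell(v)\leq n(n+1)/2$, the constraint $\ell(u)+\ell(v)=\ell(w)+2nd$ forces $D=2d\leq (n+3)/2<n+1=k+1$ once $n\geq 2$, so the hypothesis $D\geq k+1$ simply cannot be met. Since type $B_n$ means $n\geq 2$ (and for $D_{n+1}$ the standing convention $k\leq n-1$ already excludes $k=n$), route 1 alone closes the case, and it is arguably cleaner than the paper's sign chase because it is insensitive to the residue $r$. Your route 2, the iterated peeling via Corollary~\ref{corovanish}, is the paper's actual strategy but you leave the delicate parity/length bookkeeping unfinished; since route 1 suffices, that is not a gap, but I would present route 1 as the proof rather than as one of two options.
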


\begin{proof}
Use the same notations as in Lemma \ref{lambdaBfortypeBBDD}.  If $D>2k$, then
we have $\langle\alpha_{k}, \lambda_{B}\rangle\geq m+1>2$  and consequently
$N_{u, v}^{w, d}=0$ by Proposition \ref{propmainthminLeungLi}.  Now we assume
$k+1\leq D\leq2k$, so that $m=1$.

Suppose $k<n$, that is, $D=d$. Note that  $\langle\alpha_{k}, \lambda
_{B}\rangle=1+1+d-2[{\frac{d}{2}}]=3$ (resp. $2$) if $d$ is odd (resp. even).
Thus when $d$ is odd, we are already done. When $d$ is even, we note that
$\mbox{sgn}_{{k+1}}(us_{k})+\mbox{sgn}_{{k+1}}(v)=0<1\leq\mbox{sgn}_{{k+1}%
}(\tilde w s_{k})+\langle\alpha_{k+1}, \lambda_{B}-\alpha_{k}^{\vee}\rangle$.
Thus we have $N_{us_{k}, v}^{\tilde w s_{k}, \lambda_{B}-\alpha_{k}^{\vee}%
}=0$  by using Proposition \ref{propmainthminLeungLi}(1).  As a consequence,
we have $N_{u, v}^{\tilde w, \lambda_{B}}=0$, by using Corollary
\ref{corovanish} (1). That is, $N_{u, v}^{w, d}=0$.

Suppose $k=n$, that is, $D=2d$. Note that $\langle\alpha_{n}, \lambda
_{B}\rangle=2$, $\langle\alpha_{n-1}, \lambda_{B}\rangle=0$ and consequently
$\langle\alpha_{n-1}, \lambda_{B}-\alpha_{n}^{\vee}\rangle=2$. Since
$\mbox{sgn}_{{n-1}}(us_{n})+\mbox{sgn}_{{n-1}}(v)\leq1+0<2= \langle\alpha
_{n-1}, \lambda_{B}-\alpha_{n}^{\vee}\rangle$,  we have $N_{us_{n}, v}^{\tilde
ws_{n}, \lambda_{B}-\alpha_{n}^{\vee}}=0$ by Proposition \ref{propmainthminLeungLi} (1).
Thus we have  $N_{u, v}^{\tilde w, \lambda_{B}}=0$ by Corollary
\ref{corovanish} (1).
\end{proof}

\begin{lemma}
\label{lemvanforBBDDkknn}  Let $u^{\prime}, v^{\prime}, w^{\prime}\in W$ and
$\lambda\in Q^{\vee}$. For the quantum product  $\sigma^{u^{\prime}}%
\star\sigma^{v^{\prime}}$ in $QH^{*}(G/B)$, the structure constant
$N_{u^{\prime}, v^{\prime}}^{w^{\prime}, \lambda}$ vanishes, if
  both {\upshape (a)} and {\upshape (b)} hold:

\begin{enumerate}

\item[(a)] $\langle\alpha_{n-1}, \lambda\rangle=\langle\alpha_{n-2},
\lambda\rangle=0$; $\langle\alpha_{j}, \lambda\rangle=1$, \textit{whenever}
$j\geq n$.

\item[(b)] {\upshape $\mbox{sgn}_{i}(u^{\prime})=\mbox{sgn}_{i}(v^{\prime})=0$
\textit{for} $i\in\{n-2, n-1\}$; $\mbox{sgn}_{j}(u^{\prime}s_{n-1})=0$,
\textit{if} $j\geq n$.
 }
\end{enumerate}

\end{lemma}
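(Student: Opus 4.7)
The plan is to mimic the proof of Lemma~\ref{lemvanforCCCkknn}: peel off simple reflections one at a time from the entries of $N_{u',v'}^{w',\lambda}$ by repeated use of Corollary~\ref{corovanish}, and then dispatch the resulting invariant with Proposition~\ref{propmainthminLeungLi}(1).

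Concretely, using the symmetry $N_{u',v'}^{w',\lambda} = N_{v',u'}^{w',\lambda}$, I would first apply Corollary~\ref{corovanish}(3) with $\alpha = \alpha_{n-1}$: the hypotheses $\mbox{sgn}_{n-1}(u')=\mbox{sgn}_{n-1}(v')=0$ and $\langle\alpha_{n-1},\lambda\rangle=0$ make this legal, reducing the problem to showing $N_{u's_{n-1},\, v'}^{w's_{n-1},\, \lambda} = 0$. Next apply Corollary~\ref{corovanish}(2) with $\alpha = \alpha_n$, using $\langle\alpha_n,\lambda\rangle=1$ from (a) and $\mbox{sgn}_n(u's_{n-1})=0$ from (b); in type $D_{n+1}$, follow with another Corollary~\ref{corovanish}(2) at $\alpha=\alpha_{n+1}$, which is legal because $\langle\alpha_{n+1},\alpha_n^\vee\rangle=0$ forces $\langle\alpha_{n+1},\,\lambda-\alpha_n^\vee\rangle=1$ while the hypothesis supplies $\mbox{sgn}_{n+1}(u's_{n-1})=0$. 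Set $\lambda^{*}:=\lambda-\alpha_n^\vee$ in type $B_n$ and $\lambda^{*}:=\lambda-\alpha_n^\vee-\alpha_{n+1}^\vee$ in type $D_{n+1}$; the Cartan pairings $\langle\alpha_{n-1},\alpha_n^\vee\rangle=-2$ in type $B$ and $\langle\alpha_{n-1},\alpha_n^\vee+\alpha_{n+1}^\vee\rangle=-2$ in type $D$ both yield $\langle\alpha_{n-1},\lambda^{*}\rangle=2$.

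The endgame is to apply Proposition~\ref{propmainthminLeungLi}(1) at $\alpha=\alpha_{n-1}$ to the reduced invariant. Since $\mbox{sgn}_{n-1}(u's_{n-1})=1$ (because $\mbox{sgn}_{n-1}(u')=0$), vanishing is immediate whenever $\mbox{sgn}_{n-1}$ of the twisted second entry is $0$. The main obstacle is the complementary sub-case where the inequality in Proposition~\ref{propmainthminLeungLi}(1) at $\alpha_{n-1}$ is tight: here I would chain a further use of Proposition~\ref{propmainthminLeungLi}(2) to reshuffle $s_{n-1}$ between the entries, combined with a Corollary~\ref{corovanish}(3) step at $\alpha=\alpha_{n-2}$. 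The latter is permitted because $\mbox{sgn}_{n-2}(u')=\mbox{sgn}_{n-2}(v')=0$ by (b), and because $\alpha_{n-2}$ is not adjacent to $\alpha_n$ or $\alpha_{n+1}$ in the Dynkin diagram, so $\langle\alpha_{n-2},\lambda^{*}\rangle=\langle\alpha_{n-2},\lambda\rangle=0$ is inherited from (a); this extra step supplies the slack needed to close the residual cases.

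The main technical hurdle is thus keeping track of two distinct asymmetries with the type $C$ situation: in type $B$ the doubled Cartan pairing $\langle\alpha_{n-1},\alpha_n^\vee\rangle=-2$ already suffices to push $\langle\alpha_{n-1},\lambda^{*}\rangle$ from $0$ to $2$ after subtracting a single $\alpha_n^\vee$, whereas in type $D$ one must first account for the two branches $\alpha_n,\alpha_{n+1}$ off $\alpha_{n-1}$ by subtracting both $\alpha_n^\vee$ and $\alpha_{n+1}^\vee$. This asymmetry is precisely why hypothesis (b) strengthens its type $C$ analogue in Lemma~\ref{lemvanforCCCkknn}, requiring control of $\mbox{sgn}_{n-2}$ on both $u'$ and $v'$ in addition to $\mbox{sgn}_{n-1}$, and demanding $\mbox{sgn}_j(u's_{n-1})=0$ for every $j\geq n$ (one index in type $B$, two in type $D$).
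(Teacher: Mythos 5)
Your opening two steps (Corollary~\ref{corovanish}(3) at $\alpha_{n-1}$, then Corollary~\ref{corovanish}(2) at $\alpha_n$ and, in type $D$, at $\alpha_{n+1}$) are correct and are exactly the paper's chain run in the forward direction: they reduce the problem to showing $N_{u's_{n-1},\,v's_{\bar\alpha_n}}^{w's_{n-1}s_{\bar\alpha_n},\,\lambda-\bar\alpha_n^\vee}=0$. The trouble is in your endgame, which is vague and, as written, does not close.

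First, you propose a Corollary~\ref{corovanish}(3) step at $\alpha_{n-2}$ ``because $\langle\alpha_{n-2},\lambda^*\rangle=0$''. But Corollary~\ref{corovanish}(3) never \emph{produces} a vanishing by itself: it only reduces $N_{u,v}^{w,\lambda}=0$ to $N_{u,vs_\alpha}^{ws_\alpha,\lambda}=0$. Applying it to $N_{u's_{n-1},v's_{\bar\alpha_n}}^{w'',\lambda^*}$ at $\alpha_{n-2}$ just adds an $s_{n-2}$ to the second and third slots and leaves $\lambda^*$ unchanged, so you have gained nothing. And if you first reshuffle $s_{n-1}$ as you suggest with Proposition~\ref{propmainthminLeungLi}(2), then $\lambda^*$ gets replaced by $\lambda^*-\alpha_{n-1}^\vee$, which has $\langle\alpha_{n-2},\lambda^*-\alpha_{n-1}^\vee\rangle=1\neq 0$, so Corollary~\ref{corovanish}(3) no longer applies at $\alpha_{n-2}$. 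In other words, the two operations you want to combine are not simultaneously legal in the way you describe.

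The actual mechanism is the opposite of what you propose: the jump $\langle\alpha_{n-2},\lambda^*-\alpha_{n-1}^\vee\rangle=1$ is precisely the \emph{source} of the vanishing. Once both slots of the reshuffled invariant are under control — namely once you know $\mbox{sgn}_{n-2}$ of each is $0$ — the value $1$ already violates Proposition~\ref{propmainthminLeungLi}(1) at $\alpha_{n-2}$ and you are done outright, with no further Corollary~\ref{corovanish} step. For this to work, though, you must be careful which element ends up in which slot after the $s_{n-1}$-reshuffle. Applying equation~\eqref{eq1} directly leaves $v's_{\bar\alpha_n}s_{n-1}$ in a slot, and you have no control over $\mbox{sgn}_{n-2}(v's_{\bar\alpha_n}s_{n-1})$. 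The paper handles this by first swapping the two factors (commutativity of the quantum product) so that, after applying Corollary~\ref{corovanish}(1) at $\alpha_{n-1}$, the relevant invariant is $N_{u',\,v's_{\bar\alpha_n}}^{w's_{n-1}s_{\bar\alpha_n}s_{n-1},\,\lambda-\bar\alpha_n^\vee-\alpha_{n-1}^\vee}$, with $u'$ (not $u's_{n-1}$) and $v's_{\bar\alpha_n}$ in the two slots. Since $s_{\bar\alpha_n}$ fixes $\alpha_{n-2}$, hypothesis (b) gives $\mbox{sgn}_{n-2}(u')=\mbox{sgn}_{n-2}(v's_{\bar\alpha_n})=0$, and then $\langle\alpha_{n-2},\lambda-\bar\alpha_n^\vee-\alpha_{n-1}^\vee\rangle=1$ forces vanishing by Proposition~\ref{propmainthminLeungLi}(1). (The paper actually presents this backwards: it establishes this base vanishing first and then propagates through Corollary~\ref{corovanish}(1), (2), (3).) So the high-level route you chose is fine, but the final step needs this commutativity trick plus a direct application of Proposition~\ref{propmainthminLeungLi}(1) at $\alpha_{n-2}$ rather than Corollary~\ref{corovanish}(3).
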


\begin{proof}
Since $\mbox{sgn}_{n-2}(v^{\prime})=0$, we have $v^{\prime}s_{\bar\alpha_{n}%
}(\alpha_{n-2})=v^{\prime}(\alpha_{n-2})\in R^{+}$. Thus $\mbox{sgn}_{n-2}%
(v^{\prime}s_{\bar\alpha_{n}})=0$.  Consequently, we have $\mbox{sgn}_{n-2}%
(u^{\prime})+\mbox{sgn}_{n-2}(v^{\prime}s_{\bar\alpha_{n}})=0<1=\langle
\alpha_{n-2}, -\alpha_{n-1}^{\vee}\rangle=  \langle\alpha_{n-2}, \lambda
-\bar\alpha_{n}^{\vee}-\alpha_{n-1}^{\vee}\rangle$.  By Proposition
\ref{propmainthminLeungLi} (1), we have  $N_{u^{\prime},
v^{\prime}s_{\bar\alpha_{n}}}^{w^{\prime}s_{n-1}s_{\bar\alpha_{n}}s_{n-1},
\lambda-\bar\alpha_{n}^{\vee}-\alpha_{n-1}^{\vee}}=0$. Since $\langle
\alpha_{n-1}, \lambda-\bar\alpha_{n}^{\vee}\rangle=-\langle\alpha_{n-1},
\bar\alpha_{n}^{\vee}\rangle=2$,  we have $N_{u^{\prime}s_{n-1}, v^{\prime
}s_{\bar\alpha_{n}}}^{w^{\prime}s_{n-1}s_{\bar\alpha_{n}}, \lambda-\bar
\alpha_{n}^{\vee}}=0$  by Corollary \ref{corovanish} (1).
Note that $\mbox{sgn}_{j}(u^{\prime}s_{n-1})=0$ and $\langle\alpha_{j}, \lambda
-\bar\alpha_{n}^{\vee}+\alpha_{j}^{\vee}\rangle=1+  \langle\alpha_{j},
-\bar\alpha_{n}^{\vee}+\alpha_{j}^{\vee}\rangle=1$, whenever $j\geq n$.
Using Corollary \ref{corovanish} (2),  we deduce   $N_{u^{\prime}s_{n-1},
v^{\prime}}^{w^{\prime}s_{n-1}, \lambda}=0$.  Then  we have  $N_{u^{\prime}, v^{\prime}}^{w^{\prime}, \lambda}%
=0$  by  Corollary \ref{corovanish} (3).
\end{proof}

\begin{prop}
\label{propdlarger2BBDD}  If $D\geq3$, then we have $N_{u, v}^{w, d}=0$ for
any $w\in W^{P}$.
\end{prop}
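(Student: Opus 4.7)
The plan is to parallel the proof of Proposition~\ref{propdlarger2CCC}. By the Peterson--Woodward comparison formula it suffices to show $N_{u,v}^{\tilde w,\lambda_{B}}=0$, where $\tilde w=w\omega_{P}\omega_{P'}$. Since Lemma~\ref{lemmavanish11forBD} already handles $D\geq k+1$, I may assume $3\leq D\leq k$, and Lemma~\ref{lambdaBfortypeBBDD} (with $m=0$, $r=D$) then gives an explicit formula for $\lambda_{B}$ whose inner products $\langle\alpha_{i},\lambda_{B}\rangle$ I will exploit root by root. The core tactic is to run a cascade of applications of Proposition~\ref{propmainthminLeungLi} and Corollary~\ref{corovanish} through the simple roots $\alpha_{k+1},\alpha_{k+2},\ldots,\alpha_{n-1}$, at the terminal step invoking Lemma~\ref{lemvanforBBDDkknn}.

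For $k<n$, one has $\mbox{sgn}_{k+1}(u)=\mbox{sgn}_{k+1}(v)=0$ (the former from the explicit form of $u$, the latter from Lemma~\ref{lemmasgnV=0}). I will split by the parity of $D$. When $D$ is odd, $\langle\alpha_{k},\lambda_{B}\rangle=2$ and I open the cascade by applying Corollary~\ref{corovanish}(1) at $\alpha_{k}$; when $D$ is even, $\langle\alpha_{k},\lambda_{B}\rangle=1$ and I first apply Corollary~\ref{corovanish}(3) at $\alpha_{k+1}$ followed by Corollary~\ref{corovanish}(2) at $\alpha_{k}$. The induction then propagates through $\alpha_{k+2},\alpha_{k+3},\ldots,\alpha_{n-1}$, with the Weyl-group factor on the $v$-side accumulating an explicit product of simple reflections controlled by Lemma~\ref{lemmaprodAtype}. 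After finitely many steps one lands on an invariant $N_{u',v'}^{w',\lambda'}$ whose parameters satisfy the hypotheses of Lemma~\ref{lemvanforBBDDkknn}, giving the desired vanishing.

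For $k=n$ (necessarily type $B_{n}$ with $D=2d\geq 4$), Lemma~\ref{lambdaBfortypeBBDD} gives $\lambda_{B}=\sum_{j=1}^{2d-1}j\alpha_{n-2d+j}^{\vee}+d\alpha_{n}^{\vee}$, so a direct calculation with the Cartan numbers of $B_{n}$ yields $\langle\alpha_{n-1},\lambda_{B}\rangle=0$ and $\langle\alpha_{n},\lambda_{B}\rangle=1$. Using the reduced expression of $u$ one checks that $\mbox{sgn}_{n-1}(u)=0$, while $\mbox{sgn}_{n-1}(v)=0$ by Lemma~\ref{lemmasgnV=0} (since $\alpha_{n-1}\in\Delta_{P}$ when $k=n$). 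I therefore start the reduction with Corollary~\ref{corovanish}(3) at $\alpha_{n-1}$ and then proceed along the tail of the Dynkin diagram until the terminal invariant can again be dispatched by Lemma~\ref{lemvanforBBDDkknn} (or, in a degenerate subcase, directly by Proposition~\ref{propmainthminLeungLi}(1)).

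The main obstacle is not any single conceptual step but the combinatorial bookkeeping: at each stage of the inductive cascade one must track precisely how the signs $\mbox{sgn}_{j}(u')$, $\mbox{sgn}_{j}(v')$ and $\mbox{sgn}_{j}(u's_{n-1})$ evolve as new simple reflections are absorbed, and then verify that the exact hypotheses of Lemma~\ref{lemvanforBBDDkknn} hold at the terminal invariant. This bookkeeping is somewhat more delicate than in Proposition~\ref{propdlarger2CCC} because of the parity split in the $k<n$ case and the short-root contributions at the tail of the Dynkin diagram when $k=n$.
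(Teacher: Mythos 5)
Your plan follows the paper closely in two of the three subcases but has a genuine gap in the third.

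For $k=n$ (type $B_n$), you compute $\langle\alpha_{n-1},\lambda_B\rangle=0$ and $\langle\alpha_n,\lambda_B\rangle=1$ correctly, but the cascade you sketch is unnecessary: the paper simply checks that $u,v,\lambda_B$ already satisfy conditions (a) and (b) of Lemma~\ref{lemvanforBBDDkknn} and applies it at once. Your route, starting with Corollary~\ref{corovanish}(3) at $\alpha_{n-1}$, is in effect unpacking the proof of Lemma~\ref{lemvanforBBDDkknn}, so it is not wrong, just longer. For $k<n$ with $d$ \emph{even}, your plan does match the paper: open with Corollary~\ref{corovanish}(3) at $\alpha_{k+1}$, then Corollary~\ref{corovanish}(2) at $\alpha_k$, propagate the cascade up to the tail, and finish with Lemma~\ref{lemvanforBBDDkknn} applied to a shifted $\lambda_B'$ that has $\langle\alpha_n,\lambda_B'\rangle=1$, $\langle\alpha_{n-1},\lambda_B'\rangle=\langle\alpha_{n-2},\lambda_B'\rangle=0$. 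That is exactly the paper's argument (transported from Proposition~\ref{propdlarger2CCC}).

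The problem is the $k<n$, $d$ \emph{odd} subcase. There $\lambda_B=\sum_{j=1}^{d-1}j\alpha_{k-d+j}^\vee+d\alpha_k^\vee+(d-1)\sum_{j=k+1}^{n-1}\alpha_j^\vee+\tfrac{d-1}{2}\bar\alpha_n^\vee$, so $\langle\alpha_k,\lambda_B\rangle=2$ but $\langle\alpha_{k+1},\lambda_B\rangle=-1$, and $\langle\alpha_j,\lambda_B\rangle=0$ for all $j\geq k+2$. If you open with Corollary~\ref{corovanish}(1) at $\alpha_k$, you are reduced to showing $N_{u,vs_k}^{\tilde w s_k,\lambda_B-\alpha_k^\vee}=0$, and now $\langle\alpha_j,\lambda_B-\alpha_k^\vee\rangle=0$ for every $j>k$ (in particular for $j=n$). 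Running the upward cascade with Corollary~\ref{corovanish}(3) never changes $\lambda$, so the terminal invariant still has $\langle\alpha_n,\lambda\rangle=0$, which violates hypothesis (a) of Lemma~\ref{lemvanforBBDDkknn} (that lemma needs $\langle\alpha_j,\lambda\rangle=1$ for $j\geq n$). Your cascade therefore cannot land on an invariant to which Lemma~\ref{lemvanforBBDDkknn} applies. The paper's actual treatment of the $d$ odd subcase is qualitatively different: it moves \emph{downward} through $\alpha_{k-1},\ldots,\alpha_{k-\bar d+1}$ with $\bar d=\min\{p,d\}$, establishes the base vanishing with Lemma~\ref{lemforBBBCCCDD} (the observation that $\langle\chi_{k-d},\lambda\rangle=0$ forces the invariant to vanish, or else $u'=1$ so the vanishing is trivial), and then inducts back up with Corollary~\ref{corovanish}(2) before finishing with Corollary~\ref{corovanish}(1) at $\alpha_k$. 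You will need this entirely different mechanism, not a parity-tweaked version of the $d$ even cascade.
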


\begin{proof}
We can assume $3\leq D\leq k$, due to Lemma \ref{lemmavanish11forBD}.

Suppose $k=n$. Then $D=2d$ and $\lambda_{B}= \sum_{j=1}^{2d-1}j\alpha
_{n-2d+j}^{\vee}+  d\alpha_{n}^{\vee}$. It is easy to check that all the
assumptions in Lemma \ref{lemvanforBBDDkknn} hold for  $u, v, \lambda_{B}$.
Thus we are done.

Suppose $k<n$ now. Then $D=d$. Recall that $u=s_{k-p+1}\cdots s_{k}$ with
$\ell(u)=p$.

Assume $d$ is odd. Then $\lambda_{B}=\sum_{j=1}^{d-1}j\alpha_{k-d+j}^{\vee}+
d\alpha_{k}^{\vee}+ (d-1)\sum_{j=k+1}^{n-1}\alpha_{j}^{\vee}+ {\frac{d-1}{2}%
}\bar\alpha_{n}^{\vee}$.  Consequently, we have $\langle\alpha_{k},
\lambda_{B}\rangle=2$ and $\langle\alpha_{j}, \lambda_{B}\rangle=0$ for each
$k-d+1\leq j\leq k-1$. Denote $\bar d:=\min\{p, d\}$, $u^{\prime}:=us_{k}
s_{k-1}\cdots s_{k-\bar d+1}$ and  $\lambda:=\lambda_{B}-\sum_{j=1}^{\bar d}
\alpha_{k-\bar d+j}^\vee$. We claim $N_{u^{\prime}, v}^{\tilde w s_{k}
s_{k-1}\cdots s_{k-\bar d+1}, \lambda}=0$.  \big(Indeed, if $p\leq d$, then
$u^{\prime}=1$ and therefore the claim follows, by noting $\lambda\neq0$.
Note that $p\leq k$. If $p>d$, then $\bar d=d$, $u^{\prime}=s_{k-p+1}%
s_{k-p+2}\cdots s_{k-d}$ and we note that  $\langle\chi_{k-d}, \lambda
\rangle=0$. Thus the claim still follows by Lemma \ref{lemforBBBCCCDD}.\big)
Note that $\mbox{sgn}_{k-\bar d+1}(v)=0$ and $\langle\alpha_{k-\bar d+1},
\lambda+\alpha_{k-\bar d+1}^{\vee}\rangle=1$.
Applying Corollary \ref{corovanish}
(2) to $(v, u's_{k-\bar d+1}, \tilde w s_ks_{k-1}\cdots s_{k-\bar d+2}, \lambda+\alpha_{k-\bar d +1}^\vee, \alpha_{k-\bar d +1})$,
we obtain $N_{u^{\prime}s_{k-\bar d+1}, v}^{\tilde w s_{k} s_{k-1}\cdots
s_{k-\bar d+2}, \lambda+\sum_{j=1}^{h} \alpha_{k-\bar d+j}^{\vee}}=0$ for
$h=1$.  By induction, we conclude $N_{u^{\prime}s_{k-\bar d+1}\cdots s_{k-\bar
d+h}, v}^{\tilde w s_{k} s_{k-1}\cdots s_{k-\bar d+h+1}, \lambda+\sum
_{j=1}^{h} \alpha_{k-\bar d+j}^{\vee}}=0$ for each $1\leq h\leq\bar d-1$. In
particular, we have  $N_{us_{k}, v}^{\tilde w s_{k}, \lambda_{B}-\alpha
_{k}^{\vee}}=0$ when $h=\bar d-1$.  Since $\langle\alpha_{k}, \lambda
_{B}\rangle=2$, we have $N_{u, v}^{\tilde w, \lambda_{B}}=0$ by Corollary
\ref{corovanish} (1).

Assume $d$ is even. Then $\lambda_{B}=\sum_{j=1}^{d-1}j\alpha_{k-d+j}^{\vee}+
d\sum_{j=k}^{n-1}\alpha_{j}^{\vee}+ {\frac{d}{2}}\bar\alpha_{n}^{\vee}$.
Consequently, we have $\langle\alpha_{k}, \lambda_{B}\rangle=1$ and
$\langle\alpha_{j}, \lambda_{B}\rangle=0$ for any $j\notin\{k, k-d\}$.  Using
exactly the same arguments as in the third paragraph of the proof of
Proposition \ref{propdlarger2CCC}, we conclude that  it suffices to show
$N_{u^{\prime}, v^{\prime}}^{w^{\prime}, \lambda_{B}^{\prime}}=0$, in order to
show $N_{u, v}^{\tilde w, \lambda_{B}}=0$.  Here $u^{\prime}=us_{k+1}\cdots
s_{n-1}s_{\bar\alpha_{n}}$,  $v^{\prime}=vv^{(k)}_{d}\cdots v^{(n-1)}_{d}$,
$w^{\prime}=\tilde ws_{k+1}\cdots s_{n-1}s_{\bar\alpha_{n}}v^{(k)}_{d}\cdots
v^{(n-1)}_{d}$ and $\lambda_{B}^{\prime}=\sum_{j=1}^{d-1}j\alpha_{n-d+j}%
^{\vee}+{\frac{d}{2}}\bar\alpha_{n}^{\vee}$, where  $v_{d}^{(i)}:=
s_{i}s_{i-1}\cdots s_{i-d+1}$ for any $k\leq i\leq n-1$.  Hence, we are done,
by using Lemma \ref{lemvanforBBDDkknn} with respect to $u^{\prime}, v^{\prime
}, \lambda_{B}^{\prime}$.  \big(Indeed, we have $v_{d}^{(i)}(\alpha
_{i})=\alpha_{i-1}$ and $v_{d}^{(i)}(\alpha_{i-1})=\alpha_{i-2}$ for each
$k\leq i\leq n-1$, by noting $d\geq3$.  Thus we have $v^{\prime}(\alpha
_{n-1})=v(\alpha_{k-1})\in R^{+}$, $v^{\prime}(\alpha_{n-2})=v(\alpha
_{k-2})\in R^{+}$ and consequently $\mbox{sgn}_{n-1}(v^{\prime}%
)=\mbox{sgn}_{n-2}(v^{\prime})=0$. It is easy to check that all the remaining
assumptions in Lemma \ref{lemvanforBBDDkknn} hold for $u^{\prime}, v^{\prime},
\lambda_{B}^{\prime}$.\big)
\end{proof}

\begin{thm}
[Classical aspects of quantum Pieri rules for Grassmannians of type $B_{n},D_{n+1}$]%
\label{thmQPRforOGOG} ${}$
Let $\sigma^{u}, \sigma^{v}$ be Schubert classes in the quantum cohomolgy of
the isotropic Grassmannian $G/P=OG(k, N)$, where $N=2n+1$ (resp. $2n+2$)  for
$\Delta$ of type $B_{n}$ (resp. $D_{n+1}$).  Recall  $u=s_{k-p+1}\cdots
s_{k-1}s_{k}$, where $1\leq p\leq k$. (Note that $c_{p}(\mathcal{S}%
^{*})=\sigma^{u}$, possibly up to a scale factor of $2$,  where $\mathcal{S}$
denotes the tautological subbundle over $OG(k, N)$.)  Then in the quantum
product
\[
\sigma^{u}\star\sigma^{v}=\sigma^{u}\cup\sigma^{v} + \sum_{w\in W^{P}, d\geq1}
N_{u, v}^{w, d}t^{d}\sigma^{w},
\]
all the Gromov-Witten invariants $N_{u, v}^{w, d}$ coincide with certain
classical intersection numbers.  More precisely, we have

\begin{enumerate}

\item If $d=1$, then we have
\[
N_{u, v}^{w, 1}=N_{u_{1}, v_{1}}^{w_{1}, 0}%
\]
with the elements $u_{1}, v_{1}, w_{1}\in W$ given by
\[
(u_{1}, v_{1}, w_{1})=%
\begin{cases}
(us_{k}, vs_{k}s_{k+1}\cdots s_{n-1}s_{\bar\alpha_{n}%
}s_{n-1}\cdots s_{k+1},   ws_{1}\cdots s_{k-1}) & \mbox{if } k<n\\
(u, vs_{n}s_{n-1}, ws_{2}\cdots s_{n-1}s_{1}\cdots s_{n-2} s_{n-1}s_{n}) &
\mbox{if } k=n
\end{cases}
,
\]
provided that $\ell(u)+\ell(v)=\ell(w)+\deg t$, and zero otherwise.

\item If $d=2$, then we have
\[
N_{u, v}^{w, 2}=N_{u, v_{2}}^{w_{2}, 0}%
\]
with  $ v_{2}%
=vs_{k}\cdots s_{n-1}s_{\bar\alpha_{n}} s_{n-1}\cdots s_1$
and  $w_{2}=ws_{1}\cdots s_{n-1} s_{\bar\alpha_{n}}%
s_{n-1}\cdots s_{k}$,  provided
that  $k<n$ and $\ell(u)+\ell(v)=\ell(w)+2\deg t$, and zero otherwise.

\item If $d\geq3$, then we have $N_{u, v}^{w, d}=0$.
\end{enumerate}

\end{thm}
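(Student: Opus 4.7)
Part (3) is immediate from Proposition \ref{propdlarger2BBDD}, so the work is to establish (1) and (2). The plan is to mimic the proof of Theorem \ref{thmQPRforIGIG} in the type-$C_n$ case. First, apply the Peterson-Woodward comparison formula (Proposition \ref{comparison}) to pass from $QH^*(G/P)$ to $QH^*(G/B)$, rewriting
\[
N_{u,v}^{w,d} \;=\; N_{u,v}^{\tilde w,\lambda_B},\qquad \tilde w = w\omega_P\omega_{P'},
\]
with $\lambda_B$ computed via Lemma \ref{lambdaBfortypeBBDD}. A length count $\ell(\omega_P\omega_{P'}) = |R_P^+|-|R_{P'}^+|$ together with the $\Delta_{P'}$-positivity criterion in Lemma \ref{charalongest} pins down the precise word $\omega_P\omega_{P'}$ in each case; this is what produces the strings $s_1\cdots s_{k-1}$, $(s_2\cdots s_{n-1})(s_1\cdots s_{n-2})$, and $(s_1\cdots s_{n-1})(s_{\bar\alpha_n})(s_{n-1}\cdots s_k)$ nested inside $w_1$ or $w_2$.

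Next, reduce $N_{u,v}^{\tilde w,\lambda_B}$ to a classical intersection number by ``peeling off'' $\lambda_B$ coroot by coroot. The main tool is Proposition \ref{propmainthminLeungLi}(2), which shifts a simple reflection $s_\alpha$ into one of $u$, $v$, or $\tilde w$ at the cost of decreasing $\lambda_B$ by $\alpha^\vee$; its contrapositive (Corollary \ref{corovanish}) disposes of intermediate vanishing steps. For $d=1$, $k<n$, copy the argument of Theorem \ref{thmQPRforIGIG}: expand $\sigma^{us_k}\star\sigma^{s_k} = \sigma^u + \sigma^{s_kus_k}$ via the quantum Chevalley formula, show the ``other'' piece contributes nothing via Proposition \ref{propmainthminLeungLi}(1) applied through $\mbox{sgn}_{k+1},\ldots,\mbox{sgn}_n$, and use Lemma \ref{lemforBBBCCCDD} to split off the Chevalley factor associated with the unique positive root $\gamma$ with $\gamma^\vee = \alpha_k^\vee + 2(\alpha_{k+1}^\vee+\cdots+\alpha_{n-1}^\vee) + \bar\alpha_n^\vee$; the reflection $s_\gamma = s_ks_{k+1}\cdots s_{n-1}s_{\bar\alpha_n}s_{n-1}\cdots s_k$ produces $v_1$. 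For $d=1$, $k=n$, we have $\lambda_B = \alpha_{n-1}^\vee+\alpha_n^\vee$, and Proposition \ref{propmainthminLeungLi}(2) applied successively at $\alpha_n$ and $\alpha_{n-1}$ yields the stated $v_1, w_1$. For $d=2$, $k<n$, $\lambda_B$ involves all of $\alpha_{k-1}^\vee,\alpha_k^\vee,\ldots,\bar\alpha_n^\vee$ with the multiplicities from Lemma \ref{lambdaBfortypeBBDD}, and the peeling is a two-pass sweep along the tail of the Dynkin diagram (analogous to the induction in the proof of Proposition \ref{propdlarger2BBDD}); this generates the long product $s_k\cdots s_{n-1}s_{\bar\alpha_n}s_{n-1}\cdots s_1$ appearing in both $v_2$ and $w_2$.

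The main obstacle is the bookkeeping in the $d=2$, $k<n$ case: each peeling step requires verifying a precise sign hypothesis {\upshape $\mbox{sgn}_\alpha(\cdot)\in\{0,1\}$} on the current $u,v,\tilde w$, and the two-pass sweep accumulates many reflections whose ordering must be tracked carefully. A secondary subtlety is the short-root node $\alpha_n$ in type $B_n$ and the two-pronged fork at $\{\alpha_n,\alpha_{n+1}\}$ in type $D_{n+1}$: the nonstandard pairing $\langle\alpha_{n-1},\alpha_n^\vee\rangle = -2$ and the $\alpha_n\leftrightarrow\alpha_{n+1}$ symmetry are absorbed into the uniform symbol $s_{\bar\alpha_n}$, and one must verify at the relevant step of the peeling that the shift is valid in both type $B$ and type $D$. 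Once the peeling terminates at $\lambda=0$, the remaining invariant is the classical intersection number $N_{u_d,v_d}^{w_d,0}$, and the dimension condition $\ell(u)+\ell(v)=\ell(w)+d\deg t$ precisely isolates the nonvanishing cases from those where the invariant vanishes identically.
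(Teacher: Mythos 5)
Your plan correctly identifies the ingredients (Peterson--Woodward, Proposition \ref{propmainthminLeungLi}(2), Lemma \ref{charalongest}, Proposition \ref{propdlarger2BBDD}, the quantum Chevalley formula), and part (3) is indeed immediate. But your account of the $d=1$, $k<n$ case contains a genuine conceptual error that breaks the proposed argument.

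You attempt to import the triple-product trick from Theorem \ref{thmQPRforIGIG} and extract a ``Chevalley factor'' for a positive root $\gamma$ with $\gamma^\vee = \alpha_k^\vee + 2(\alpha_{k+1}^\vee+\cdots+\alpha_{n-1}^\vee) + \bar\alpha_n^\vee$, and you claim the corresponding reflection $s_\gamma$ produces $v_1$. This does not match what the comparison formula gives. By Lemma \ref{lambdaBfortypeBBDD} with $d=1$, $k<n$, the Peterson--Woodward lift is just $\lambda_B = \alpha_k^\vee$, a single simple coroot --- not the long coroot you wrote, nor the coroot $\gamma^\vee=\sum_{j=k}^n\alpha_j^\vee$ that made the type-$C$ argument work in one shot. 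In type $C$ the whole reflection string in $v_1$ genuinely comes from $s_\gamma$, because $\lambda_B$ equals the coroot of a single long positive root. In types $B$ and $D$ that is false: $\lambda_B=\alpha_k^\vee$ only accounts for one $s_k$ (via Proposition \ref{propmainthminLeungLi}(2), or equivalently a Chevalley term $q_{\alpha_k^\vee}\sigma^{vs_k}$ in $\sigma^{s_k}\star\sigma^v$), whereas the remaining string $v'=s_{k+1}\cdots s_{n-1}s_{\bar\alpha_n}s_{n-1}\cdots s_{k+1}$ in $v_1$ is supplied by the Peterson--Woodward translation $\omega_P\omega_{P'}=s_1\cdots s_{k-1}v'$ and must then be moved from the $w$-slot to the $v$-slot by repeated use of Corollary \ref{coridentity} (a purely classical peeling). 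Your write-up collapses these two mechanisms into one fictitious $s_\gamma$, and even the proposed expression $s_ks_{k+1}\cdots s_{\bar\alpha_n}\cdots s_k$ has the wrong length to be a reflection with the coroot you give. Without the Corollary \ref{coridentity} stripping step --- and the accompanying length/dimension check that makes the coefficient vanish when $\ell(vs_kv')\neq\ell(vs_k)-\ell(v')$ --- the $d=1$, $k<n$ case is not actually proved.

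Similar but milder caveats apply to the other two cases: for $d=1$, $k=n$, ``apply Proposition \ref{propmainthminLeungLi}(2) at $\alpha_n$ and $\alpha_{n-1}$'' glosses over the need to establish intermediate sign equalities (the paper's $a_1=a_2=a_3=a_4$ chain requires case analysis and the dimension constraint at each stage, and the dimension constraint is also what ensures $\ell(v)\geq 2$ so that $\mbox{sgn}_n(v)=\mbox{sgn}_{n-1}(vs_n)=1$). For $d=2$, $k<n$, ``a two-pass sweep'' is not a proof: the paper needs the delicate claim $(\circledast)$ plus Lemma \ref{lemmaforthmQPROG}, which package multiple uses of Corollary \ref{coridentity} and length bookkeeping that are far from automatic. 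You should spell out the $\lambda_B$ values per $d$ explicitly from Lemma \ref{lambdaBfortypeBBDD} and keep the Peterson--Woodward contribution and the Chevalley contribution cleanly separated.
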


\begin{proof}
Recall that we have $N_{u, v}^{w, d}=N_{u, v}^{w\omega_{P}\omega_{P^{\prime}},
\lambda_{B}}$, where $\omega_{P}\omega_{P^{\prime}}, \lambda_{B}$ are elements
associated to  $\lambda_{P}=d\alpha_{k}^{\vee}+Q^{\vee}_{P}$, defined by the
Peterson-Woodward comparison formula. Furthermore,  we have $N_{u, v}^{w,
d}=0$ unless $\ell(u)+\ell(v)=\ell(w)+d\cdot\deg t$, because of the dimension
constraint. By Proposition \ref{propdlarger2BBDD}, we have $N_{u, v}^{w, d}%
=0$, for either of the cases: (1)  $d\geq3$ and $k<n$; (2) $d\geq2$ and $k=n$.
For the remaining cases, we assume $d=1$ first.

When $k<n$, we have $\lambda_{B}=\alpha_{k}^{\vee}$ (by Lemma
\ref{lambdaBfortypeBBDD}) and consequently  $\Delta_{P^{\prime}}=\Delta
_{P}\setminus\{\alpha_{k-1}, \alpha_{k+1}\}$. Denote $v^{\prime}%
:=s_{k+1}\cdots s_{n-1}s_{\bar\alpha_{n}}s_{n-1}\cdots s_{k+1}$. By direct
calculations, we conclude that  $\ell(\omega_{P}\omega_{P^{\prime}}%
)=|R_{P}^{+}|-|R_{P^{\prime}}^{+}|=\ell(s_{1}\cdots s_{k-1} v^{\prime})$ and
$s_{1}\cdots s_{k-1} v^{\prime}(\alpha)\in R^{+}$ for all  $\alpha\in\Delta_{P^{\prime}}$. Thus we
have $\omega_{P}\omega_{P^{\prime}}=s_{1}\cdots s_{k-1}v^{\prime}$ by Lemma
\ref{charalongest}. 
 Furthermore by Proposition \ref{propmainthminLeungLi}(2), we
 have $N_{u, v}^{w, 1}=N_{u, v}^{w\omega_{P}\omega_{P^{\prime}},
\alpha_{k}^{\vee}}=N_{us_{k}, vs_{k}}^{w\omega_{P}\omega_{P^{\prime}}, 0}$.
Note that $v'=(v')^{-1}$ and $\mbox{sgn}_j(us_k)=0$ for all $j>k$.
It follows directly from Corollary \ref{coridentity} that
 $N_{us_{k}, vs_{k}}^{w\omega_{P}\omega_{P^{\prime}}, 0}=N_{us_k, vs_kv'}^{ws_1\cdots s_{k-1}, 0} $ if
 $\ell(vs_kv')=\ell(vs_k)-\ell(v')$, or $0$ otherwise.
In the latter case, we have   $N_{us_k, vs_kv'}^{ws_1\cdots s_{k-1}, 0} =0$ from     the dimension constraint  (by noting
 $\ell(us_k)+\ell(vs_k)=\ell(w)+  \deg t-2=\ell(w\omega_{P}\omega_{P^{\prime}})=\ell(ws_1\cdots s_{k-1})+\ell(v')$).
Thus (1) follows in this case.

When $k=n$ (in this case $\Delta$ is of $B_{n}$-type by our assumption), we
have $\lambda_{B}=\alpha_{n-1}^{\vee}+\alpha_{n}^{\vee}$, so that
$\Delta_{P^{\prime}}=\Delta_P\setminus\{\alpha_{n-2}\}$. Using Lemma \ref{lemmaprodAtype}, we obtain $\omega
_{P}\omega_{P^{\prime}}=s_{2}\cdots s_{n-1}s_{1}\cdots s_{n-2}$.
Consequently,  we have $N_{u, v}^{w, 1}=N_{u, v}^{w\omega_{P}\omega
_{P^{\prime}}, \lambda_{B}}=N_{u, v}^{w_{1}s_{n}s_{n-1}, \lambda_{B}}=:a_{1}$.
Furthermore, we note that $\ell(\omega_{P}\omega_{P^{\prime}})+\langle2\rho,
\lambda_{B}\rangle=2n=\deg t=\ell(u)+\ell(v)-\ell(w)\leq n+\ell(v)$. Thus
$\ell(v)\geq2$ and consequently $\mbox{sgn}_{n}(v)= \mbox{sgn}_{n-1}%
(vs_{n})=1$. Denote  $a_{2}:=N_{us_{n-1}, v}^{w_{1}s_{n}, \lambda_{B}}$,
$a_{3}:=N_{us_{n-1}, vs_{n}}^{w_{1}, \lambda_{B}-\alpha_{n}^{\vee}}$ and
$a_{4}:=N_{u, vs_{n}s_{n-1}}^{w_{1}, \lambda_{B}-\alpha_{n}^{\vee}%
-\alpha_{n-1}^{\vee}}$. We can show the following identities.

\begin{enumerate}

\item[i)] $a_{1}=a_{2}$. Indeed, if $a_{2}=0$, then we have $a_{1}=0$, by noting
$\mbox{sgn}_{n-1}(u)=\mbox{sgn}_{n-1}(v)=\langle\alpha_{n-1}, \lambda
_{B}\rangle=0$ and using Corollary \ref{corovanish} (3). If $a_{2}\neq0$, then
we have  $\mbox{sgn}_{n-1}(w_{1}s_{n})=\mbox{sgn}_{n-1}(us_{n-1}%
)+\mbox{sgn}_{n-1}(v)-\langle\alpha_{n-1}, \lambda_{B}\rangle=
\mbox{sgn}_{n-1}(us_{n-1})=1$. Thus $\mbox{sgn}_{n-1}(w_1s_ns_{n-1})=0$.
 Note
   $ \langle \alpha_{n-1}, \alpha_{n-1}^\vee+\lambda_B\rangle=2,   \mbox{sgn}_{n-1}(us_{n-1})=1$ and $\mbox{sgn}_{n-1}(vs_{n-1})=1.$
Applying ``$(u, v, w, \lambda, \alpha)$" in  equations \eqref{eq1} and  \eqref{eq2} of Proposition \ref{propmainthminLeungLi}(2) to $(us_{n-1}, vs_{n-1}, w_1s_ns_{n-1}, \alpha_{n-1}^\vee+\lambda_B,  \alpha_{n-1})$, we have   $N_{us_{n-1}, vs_{n-1}}^{w_1s_ns_{n-1}, \alpha_{n-1}^\vee+\lambda_B}=N_{us_{n-1}s_{n-1}, vs_{n-1}s_{n-1}}^{w_1s_{n}s_{n-1}, \lambda_B}=N_{u, v}^{w_1s_{n}s_{n-1}, \lambda_B}$ and
 $N_{us_{n-1}, vs_{n-1}}^{w_1s_ns_{n-1}, \alpha_{n-1}^\vee+\lambda_B}=N_{us_{n-1}, vs_{n-1}s_{n-1}}^{w_1s_{n}s_{n-1}s_{n-1}, (\alpha_{n-1}^\vee+\lambda_B)-\alpha_{n-1}^\vee}=N_{us_{n-1}, v}^{w_1s_{n}, \lambda_B}.$  Hence,
    $ N_{u, v}^{w_1s_{n}s_{n-1}, \lambda_B}=N_{us_{n-1}, v}^{w_1s_{n}, \lambda_B}$.
That is,   $a_1=a_2$.

\item[ii)] $a_{2}=a_{3}$. Indeed, if $a_{3}=0$, then we have $a_{2}=0$  by
Corollary \ref{corovanish} (2). If $a_{3}\neq0$,  then we have $\ell
(w_{1})+2=\ell(us_{n-1})+\ell(vs_{n})=\ell(u)+\ell(v)=\ell(w)+2n=\ell(w\omega_{P}%
\omega_{P^{\prime}})+4=\ell(w_{1}s_{n}s_{n-1})+4$.  Hence, $\ell(w_{1}%
s_{n}s_{n-1})=\ell(w_{1})-2$ and consequently we have $\mbox{sgn}_{n}%
(w_{1})=1$. Note that $\mbox{sgn}_{n}(us_{n-1})=\mbox{sgn}_{n}(vs_{n})=0$ and
$\langle\alpha_{n}, \alpha^{\vee}_{n-1}\rangle=-1$. Then we have $a_{2}=a_{3}$
by    Proposition \ref{propmainthminLeungLi} (2).

\item[iii)] $a_{3}=a_{4}$. If $\mbox{sgn}_{n-1}(w_{1})=0$, then we are done by
Proposition \ref{propmainthminLeungLi} (2).  If $\mbox{sgn}_{n-1}(w_{1})=1$,
then we have $a_{3}=0$ and $a_{4}=0$ by Proposition \ref{propmainthminLeungLi}
(1), so that we are also done.
\end{enumerate}

\noindent Hence, we have $a_{1}=a_{4}$. That is, $N_{u, v}^{w, 1}=N_{u_{1}, v_{1}%
}^{w_{1}, 0}$.

It remains to deal with the case when $d=2$ and $k<n$. In this case, we have
$\lambda_{B}= \alpha_{k-1}^{\vee}+ 2\sum_{j=k}^{n-1}\alpha_{j}^{\vee}+
\bar\alpha_{n}^{\vee}$, which satisfies $\langle\alpha, \lambda_{B}\rangle=0$
for all $\alpha\in\Delta_P\setminus\{\alpha_{k-2}\}$. By Lemma
\ref{charalongest} again, we conclude $\omega_{P}\omega_{P^{\prime}}%
=s_{2}\cdots s_{k-1}s_{1}\cdots s_{k-2}$ (by which we mean the unit $1$ if
$k-2\leq0$).
 Note
   $\langle \alpha_{k+1}, \alpha_{k+1}^\vee+\lambda_B\rangle=2, \mbox{sgn}_{k+1}(w\omega_P\omega_{P'})=0, \mbox{sgn}_{k+1}(us_{k+1})=1$ and  $\mbox{sgn}_{k+1}(vs_{k+1})=1.$
 Applying    ``$(u, v, w, \lambda, \alpha)$" in equations \eqref{eq1} and \eqref{eq2} of Proposition \ref{propmainthminLeungLi}(2) to $(us_{k+1}, vs_{k+1}, w\omega_P\omega_{P'}, \alpha_{k+1}^\vee+\lambda_B, \alpha_{k+1})$, we have
     $  N_{us_{k+1}, vs_{k+1}}^{w\omega_P\omega_{P'}, \alpha_{k+1}^\vee+\lambda_B}=N_{us_{k+1}s_{k+1}, vs_{k+1}s_{k+1}}^{w\omega_P\omega_{P'}, \alpha_{k+1}^\vee+\lambda_B-\alpha_{k+1}^\vee}=N_{u, v}^{w\omega_P\omega_{P'}, \lambda_B}  $
 and $  N_{us_{k+1}, vs_{k+1}}^{w\omega_P\omega_{P'}, \alpha_{k+1}^\vee+\lambda_B}=N_{us_{k+1}, vs_{k+1}s_{k+1}}^{w\omega_P\omega_{P'}s_{k+1}, \alpha_{k+1}^\vee+\lambda_B-\alpha_{k+1}^\vee}=N_{us_{k+1}, v}^{w\omega_P\omega_{P'}s_{k+1},  \lambda_B}.$
 Hence, we have   $   N_{u, v}^{w\omega_P\omega_{P'}, \lambda_B}=N_{us_{k+1}, v}^{w\omega_P\omega_{P'}s_{k+1},  \lambda_B}.$
 Then by  using  induction and the same arguments above, we conclude  $$ N_{u, v}^{w, 2}=N_{u, v}^{w\omega_{P}%
\omega_{P^{\prime}}, \lambda_{B}}
= N_{us_{k+1}\cdots
s_{n-1}s_{\bar\alpha_{n}}, v}^{w\omega_{P}\omega_{P^{\prime}}s_{k+1}\cdots
s_{n-1}s_{\bar\alpha_{n}}, \lambda_{B}}.$$

  Denote $(\beta_{1}, \cdots, \beta_{2n-2k}) :=(\alpha_{k}, \cdots, \alpha_{n-1}, \alpha_{k-1}, \cdots, \alpha_{n-2}) $ and set
$$\begin{array}{ll}
   u^{\prime\prime}:=us_{k+1}\cdots s_{n-1}s_{\bar\alpha_{n}}, &     v^{\prime\prime}:= vs_{\beta_{1}}\cdots s_{\beta_{2n-2k}},
  \\
     w^{\prime\prime}:=w\omega_{P}\omega_{P^{\prime}} s_{k+1}\cdots s_{n-1}s_{\bar\alpha_{n}}s_{\beta_{1}}\cdots s_{\beta_{2n-2k}}, &
       \lambda_{B}^{\prime\prime} :=\lambda_{B}-\sum_{i=1}^{2n-2k}\beta_{i}^{\vee}.
\end{array}$$
 Since    $v^{\prime\prime}(\alpha_{n-1})=
   v(\alpha_{k-1})\in R^{+}$, we have
$\mbox{sgn}_{n-1}(v^{\prime\prime})=0$.
Note that $\lambda_{B}^{\prime\prime} =\alpha_{n-1}^{\vee}+\bar\alpha_{n}^{\vee}$. 
  Applying    ``$(u, v, w, \lambda, \alpha)$" in equations \eqref{eq1} and \eqref{eq2} of Proposition \ref{propmainthminLeungLi}(2) to $(u''s_{n-1}, v''s_{n-1}, w'', \alpha_{n-1}^\vee+\lambda_B'', \alpha_{n+1})$, we conclude  $N_{u^{\prime\prime}, v^{\prime\prime}}%
^{w^{\prime\prime}, \lambda_{B}^{\prime\prime}}=N_{u^{\prime\prime}s_{n-1},
v^{\prime\prime}}^{w^{\prime\prime}s_{n-1}, \lambda_{B}^{\prime\prime}} $
 Applying    ``$(u, v, w, \lambda, \alpha)$" in equations \eqref{eq3} and \eqref{eq1} of Proposition \ref{propmainthminLeungLi}(2) to $(v'', u''s_{n-1}s_{\bar \alpha_n}$, $w''s_{n-1}s_{\bar \alpha_n}, \lambda_B'', \bar\alpha_{n})$ (where we use these inductions for twice, i.e., for   $\alpha_n$ and $\alpha_{n+1}$ respectively, in   the case of  type $D_{n+1}$), we conclude   $N_{u^{\prime\prime}s_{n-1},
v^{\prime\prime}}^{w^{\prime\prime}s_{n-1}, \lambda_{B}^{\prime\prime}} =N_{u^{\prime\prime}s_{n-1}, v^{\prime\prime}s_{\bar\alpha_{n}}}%
^{w^{\prime\prime}s_{n-1}s_{\bar\alpha_{n}}, \lambda_{B}^{\prime\prime}%
-\bar\alpha_{n}^{\vee}}.$
 Applying equation     \eqref{eq1} of Proposition \ref{propmainthminLeungLi}(2),  we conclude
$N_{u^{\prime\prime}s_{n-1}, v^{\prime\prime}s_{\bar\alpha_{n}}}%
^{w^{\prime\prime}s_{n-1}s_{\bar\alpha_{n}}, \lambda_{B}^{\prime\prime}%
-\bar\alpha_{n}^{\vee}}=    N_{u^{\prime\prime}, v^{\prime\prime}s_{\bar\alpha_{n}%
}s_{n-1}}^{w^{\prime\prime}s_{n-1}s_{\bar\alpha_{n}},0}.$
 (In order to apply  equations in Proposition \ref{propmainthminLeungLi}(2) above,   we have assumed the grading  equality ``$\mbox{sgn}_\alpha(u)+\mbox{sgn}_\alpha(v)=\mbox{sgn}_\alpha(w)+\langle\alpha, \lambda\rangle$" holds for all the following four structure constants. If  it  does not hold for any one of the structure constants, it is easy to show all of them  are equal to $0$, so that we always have the following equalities as expected.) That is, we have $$N_{u^{\prime\prime}, v^{\prime\prime}}%
^{w^{\prime\prime}, \lambda_{B}^{\prime\prime}}=N_{u^{\prime\prime}s_{n-1},
v^{\prime\prime}}^{w^{\prime\prime}s_{n-1}, \lambda_{B}^{\prime\prime}} =N_{u^{\prime\prime}s_{n-1}, v^{\prime\prime}s_{\bar\alpha_{n}}}%
^{w^{\prime\prime}s_{n-1}s_{\bar\alpha_{n}}, \lambda_{B}^{\prime\prime}%
-\bar\alpha_{n}^{\vee}}=    N_{u^{\prime\prime}, v^{\prime\prime}s_{\bar\alpha_{n}%
}s_{n-1}}^{w^{\prime\prime}s_{n-1}s_{\bar\alpha_{n}},0}.$$

Hence, (2) follows directly from Lemma \ref{lemmaforthmQPROG} and the next claim:
 $$ (\circledast)\qquad N_{us_{k+1}\cdots
s_{n-1}s_{\bar\alpha_{n}}, v}^{w\omega_{P}\omega_{P^{\prime}}s_{k+1}\cdots
s_{n-1}s_{\bar\alpha_{n}}, \lambda_{B}}=N_{u^{\prime\prime}, v^{\prime\prime}}^{w^{\prime\prime}, \lambda
_{B}^{\prime\prime}}.$$
 \big(Indeed if $N_{u^{\prime\prime},
v^{\prime\prime}}^{w^{\prime\prime}, \lambda_{B}^{\prime\prime}}=0$,  then we can show
  $N_{u^{\prime\prime}, v^{\prime\prime}s_{\beta_{2n-2k}}\cdots s_{\beta_{2n-2k-h+1}}}%
^{w^{\prime\prime}s_{\beta_{2n-2k}}\cdots s_{\beta_{2n-2k-h+1}}, \lambda_{B}^{\prime\prime}+\sum
_{i=1}^{h}\beta_{2n-2k-i+1}^{\vee}}$ $=0$ for $1\leq h\leq2n-2k$, by using Corollary
\ref{corovanish} (2) and   induction on $h$. In particular for $h=2n-2k$, we have
  $N_{u^{\prime\prime}, v^{\prime\prime}s_{\beta_{2n-2k}}\cdots
s_{\beta_{1}}}^{w^{\prime\prime}s_{\beta_{2n-2k}}\cdots s_{\beta_{1}},
\lambda_{B}}=0$, hence $(\circledast)$. 
 Since
$\ell(u)+\ell(v)=\ell(w)+2\deg t$,  we have $\ell(u^{\prime\prime}%
)+(v)=\ell(w\omega_{P}\omega_{P^{\prime}}s_{k+1}\cdots s_{n-1}s_{\bar
\alpha_{n}})+\langle2\rho, \lambda_{B}\rangle$. Thus if  $N_{u^{\prime\prime},
v^{\prime\prime}}^{w^{\prime\prime}, \lambda_{B}^{\prime\prime}}\neq0$, then
we have  $\ell(u^{\prime\prime})+\ell(v^{\prime\prime})=\ell(w^{\prime\prime
})+\langle2\rho, \lambda_{B}^{\prime\prime}\rangle$ and consequently
$\ell(u^{\prime\prime})+\ell(v^{\prime\prime}s_{\beta_{2n-2k}}\cdots
s_{\beta_{h}})=\ell(w^{\prime\prime}s_{\beta_{2n-2k}}\cdots s_{\beta_{h}})+
\langle2\rho, \sum_{i=h}^{2n-2k}\beta_{i}^{\vee}\rangle$ for all $1\leq
h\leq2n-2k$. Hence, we have $\langle\beta_{h}, \lambda_{B}-\sum_{i=1}^{h-1}\beta
_{i}^{\vee}\rangle=1$,  $\mbox{sgn}_{\beta_{h}}(w\omega_{P}\omega_{P^{\prime}%
}s_{k+1}\cdots s_{n-1}s_{\bar\alpha_{n}}s_{\beta_{1}}\cdots s_{\beta_{h-1}%
})=0$,  $\mbox{sgn}_{\beta_{h}}(u^{\prime\prime})$ $=0$ and  $\mbox{sgn}_{\beta
_{h}}(vs_{\beta_{1}}\cdots s_{\beta_{h-1}})=1$.  By  using
Proposition \ref{propmainthminLeungLi} (2) and induction on $h$, we   still conclude   that $(\circledast)$ holds.\big)
\end{proof}

\begin{lemma}\label{lemmaforthmQPROG}
Using the same assumptions and notations as in  Theorem \ref{thmQPRforOGOG} (and in the proof of it), we have
    $$ N_{u^{\prime\prime}, v^{\prime\prime}s_{\bar\alpha_{n}%
}s_{n-1}}^{w^{\prime\prime}s_{n-1}s_{\bar\alpha_{n}},0}=N_{u, v_2}^{w_2, 0}.$$
Furthermore if $N_{u, v_2}^{w_2, 0}\neq 0$, then we have
  $v_2\in W^P$ and  $\ell(v_2)=\ell(v)-\ell(v^{-1}v_2)$.
\end{lemma}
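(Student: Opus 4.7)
The plan is to establish the identity $N_{u^{\prime\prime}, v^{\prime\prime}s_{\bar\alpha_{n}}s_{n-1}}^{w^{\prime\prime}s_{n-1}s_{\bar\alpha_{n}},0}=N_{u, v_2}^{w_2, 0}$ by iteratively applying Corollary \ref{coridentity} (a consequence of Proposition \ref{propmainthminLeungLi}) to transport the tail $s_{k+1}\cdots s_{n-1}s_{\bar\alpha_n}$ in $u''=us_{k+1}\cdots s_{n-1}s_{\bar\alpha_n}$ back to the second and third arguments, arriving at $(u,v_2,w_2)$ one simple reflection at a time. Two dual versions of Corollary \ref{coridentity} power the chain: the original form moves an $s_\alpha$ from the third argument to the second (requiring $\mbox{sgn}_\alpha$ of positions $u,w$ to be $0,1$), and the $u\leftrightarrow v$-swapped form moves an $s_\alpha$ from the third argument to the first. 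In any intermediate step where the required sgn condition fails, both sides vanish by Proposition \ref{propmainthminLeungLi}(1), so the chain always propagates.

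To set up the chain I would first expand $w''s_{n-1}s_{\bar\alpha_n}$ and $v''s_{\bar\alpha_n}s_{n-1}$ using the explicit form $\omega_P\omega_{P'}=s_2\cdots s_{k-1}s_1\cdots s_{k-2}$ established in the proof of Theorem \ref{thmQPRforOGOG}, together with the commutation of $s_{\bar\alpha_n}$ with each $s_j$ for $j\le n-2$ and the type-$A$ rewriting rules of Lemma \ref{lemmaprodAtype}. This makes visible a common ``outer tail'' that, as reflections are peeled off $u''$, is absorbed into $w$ and $v$ to produce exactly the tails of $w_2$ and $v_2$. The order in which I peel reflections off $u''$ is: first $s_{\bar\alpha_n}$ (split into the commuting pair $s_n,s_{n+1}$ in the $D_{n+1}$ case), then $s_{n-1},s_{n-2},\ldots,s_{k+1}$. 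The sgn inequalities required at each step are furnished by the known descents of $u''$, by $v\in W^P$ (hence $\mbox{sgn}_\alpha(v)=0$ for $\alpha\in\Delta_P$, by Lemma \ref{lemmasgnV=0}), and by the inductively established structure of the intermediate third-argument elements.

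For the second half of the lemma, assume $N_{u,v_2}^{w_2,0}\neq 0$. The classical dimension constraint forces $\ell(u)+\ell(v_2)=\ell(w_2)$. Combined with the degree-two dimension equation $\ell(u)+\ell(v)=\ell(w)+2\deg t$ and the observation that both displayed expressions $v^{-1}v_2=s_k\cdots s_{n-1}s_{\bar\alpha_n}s_{n-1}\cdots s_1$ and $w^{-1}w_2=s_1\cdots s_{n-1}s_{\bar\alpha_n}s_{n-1}\cdots s_k$ have length at most $\deg t$, subtracting the two dimension equations yields $\ell(v)-\ell(v_2)=2\deg t-(\ell(w_2)-\ell(w))$. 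Since $\ell(w_2)-\ell(w)\le\deg t$ and $\ell(v)-\ell(v_2)\le\ell(v^{-1}v_2)\le\deg t$, both inequalities must be equalities; thus $\ell(v_2)=\ell(v)-\ell(v^{-1}v_2)$ with $\ell(v^{-1}v_2)=\deg t$, i.e.\ the tail expression is reduced. Finally, $v_2\in W^P$ is checked by direct computation of $v_2(\alpha_j)$ for each $\alpha_j\in\Delta_P$: roots not involved in the tail remain positive under $v_2$ by $v\in W^P$, and for the remaining finitely many roots a careful application of the tail together with $v\in W^P$ yields positivity.

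The main obstacle is the sgn bookkeeping at every intermediate triple, especially near the ``end'' of the Dynkin diagram where $\alpha_{n-1},\alpha_n$ (and $\alpha_{n+1}$ in type $D_{n+1}$) interact via nonstandard braid relations. I would organize the proof as a double induction --- outer on the depth of the tail peeled from $u''$, inner on the particular simple root transferred at that depth --- and run the $B_n$ and $D_{n+1}$ cases in parallel, splitting only at the final ``$\bar\alpha_n$'' step where the structure of $s_{\bar\alpha_n}$ genuinely differs between the two types.
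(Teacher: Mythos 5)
Your proposal is essentially the paper's argument. Both rest on Corollary~\ref{coridentity} (hence Proposition~\ref{propmainthminLeungLi}(2)) to migrate simple-reflection factors among the three slots, after first using Lemma~\ref{lemmaprodAtype} to rewrite $v''s_{\bar\alpha_n}s_{n-1}s_{n-2}\cdots s_1$ as $v_2\cdot(s_ks_{k+1}\cdots s_{n-1})$ and to put $w''s_{n-1}s_{\bar\alpha_n}$ into a compatible shape; and both settle the second half by comparing the classical constraint $\ell(u)+\ell(v_2)=\ell(w_2)$ with the quantum one $\ell(u)+\ell(v)=\ell(w)+2\deg t$ and the length bound $\ell(v^{-1}v_2)=\ell(w^{-1}w_2)\le\deg t$. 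The only structural difference is cosmetic: you peel reflections off one at a time, the paper transfers whole blocks in three moves $(*)$, $(**)$, $(***)$.

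The soft spot is the line ``in any intermediate step where the required sgn condition fails, both sides vanish by Proposition~\ref{propmainthminLeungLi}(1), so the chain always propagates.'' Corollary~\ref{coridentity} requires the hypothesis $\mbox{sgn}_\alpha(w)=\mbox{sgn}_\alpha(u)+1=1$; when that hypothesis fails the corollary yields nothing, and it is not automatic that both endpoints of the chain vanish. The paper avoids this by a three-way case analysis: first assume the left-hand side is nonzero and deduce from the dimension constraint the exact length identities (and hence the sgn conditions) that legitimize each application of the corollary; then assume the right-hand side is nonzero, verify $v_2\in W^P$ and $\ell(v_2)=\ell(v)-\ell(v^{-1}v_2)$, and run the chain backwards; finally note that if neither side is nonzero the identity is vacuous. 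Without some version of this split, ``the chain always propagates'' is a genuine gap: the sgn hypotheses you need are consequences of the nonvanishing assumption combined with the dimension count, not unconditional facts about the intermediate triples, and a failed hypothesis alone does not force vanishing of the unreduced side.
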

\begin{proof}
By Lemma \ref{lemmaprodAtype}, we  have
  $v'' s_{\bar\alpha_{n}}s_{n-1} s_{n-2}\cdots s_1=v_2\cdot (s_k s_{k+1}\cdots s_{n-1}) $ and
  $w''s_{n-1}s_{\bar\alpha_{n}}=ws_1\cdots s_{k-1}s_{k+1}\cdots s_{n-1}s_k\cdots s_{n-2} s_{\bar\alpha_{n}}s_{n-1}s_{\bar\alpha_{n}}s_1\cdots s_{n-2}$.
Denote  $w_3:=w''s_{n-1}s_{\bar\alpha_{n}}s_{n-2}\cdots s_1=ws_1\cdots s_{k-1}s_{k+1}\cdots s_{n-1}s_k\cdots s_{n-2} s_{\bar\alpha_{n}}s_{n-1}s_{\bar\alpha_{n}}$.

 Assume $N_{u^{\prime\prime}, v^{\prime\prime}s_{\bar\alpha_{n}%
}s_{n-1}}^{w^{\prime\prime}s_{n-1}s_{\bar\alpha_{n}},0}\neq 0$ first, then $\ell(u'')+\ell( v^{\prime\prime}s_{\bar\alpha_{n}%
}s_{n-1})=\ell(w^{\prime\prime}s_{n-1}s_{\bar\alpha_{n}})$. Observe that
    $\ell(u'')=\ell(u)+\ell(u^{-1}u'')$ and
  $\ell(w^{-1}w''s_{n-1}s_{\bar\alpha_{n}})+\ell(v^{-1}v''s_{\bar\alpha_{n}}s_{n-1})=2\deg t+\ell(u^{-1}u'')
       $. Combining the assumption $\ell(u)+\ell(v)=\ell(w)+2\deg t$, we conclude that
 both  $\ell(w''s_{n-1}s_{\bar\alpha_{n}})=\ell(w)+\ell(w^{-1}w''s_{n-1}s_{\bar\alpha_{n}})$ and
  $\ell(v''s_{\bar\alpha_{n}}s_{n-1})=\ell(v)-\ell(v^{-1}v''s_{\bar\alpha_{n}}s_{n-1})$ hold.
Furthermore by    Corollary \ref{coridentity}, we have
     $$N_{u'', v'' s_{\bar\alpha_{n}}s_{n-1}}^{w''s_{n-1}s_{\bar\alpha_{n}},0} =N_{u'', v'' s_{\bar\alpha_{n}}s_{n-1} s_{n-2}\cdots s_1}^{ w''s_{n-1}s_{\bar\alpha_{n}} \cdot s_{n-2}\cdots s_1 , 0}=N_{u'', v_2s_k s_{k+1}\cdots s_{n-1}}^{w_3, 0}\hspace{1.0cm}(*)$$  and \quad
       $\ell(v'' s_{\bar\alpha_{n}}s_{n-1} s_{n-2}\cdots s_1)=\ell(v'' s_{\bar\alpha_{n}}s_{n-1})-\ell(s_{n-2}\cdots s_1). \hspace{1.5cm}(*')$

\noindent Note
      $\ell(v)-\ell(v^{-1}v_2s_k s_{k+1}\cdots s_{n-1})\leq  \ell(v_2s_k s_{k+1}\cdots s_{n-1})$ $=\ell(v'' s_{\bar\alpha_{n}}s_{n-1} s_{n-2}\cdots s_1)
             $  $= \ell(v)-\ell(v^{-1}v'' s_{\bar\alpha_{n}}s_{n-1})- \ell(s_{n-2}\cdots s_1)=\ell(v)-\ell(v^{-1}v_2s_k s_{k+1}\cdots s_{n-1})$.
 Hence, the equality holds and   consequently   $ \ell(v_2s_k s_{k+1}\cdots s_{n-1})=\ell(v_2)- \ell(s_{k}s_{k+1}\cdots s_{n-1})$.
Then by Corollary \ref{coridentity} again, we have
      $$ N_{u'', v_2s_k s_{k+1}\cdots s_{n-1}}^{w_3, 0}
        = N_{u'', v_2}^{w_3s_{n-1}\cdots s_{k+1}s_k, 0}.\hspace{4cm} (**) $$
    Note  that
    $s_{\bar\alpha_{n}}    s_{n-1} s_{\bar\alpha_{n}}   s_{n-1}=  s_{n-1} s_{\bar\alpha_{n}}  s_{n-1} s_{\bar\alpha_{n}}$.
 We have  $w_3s_{n-1}\cdots s_{k+1}s_k= w_2s_{k+1}\cdots s_{n-1}  s_{\bar\alpha_{n}}$ with
               $\ell(w_2s_{k+1}\cdots s_{n-1}  s_{\bar\alpha_{n}})= \ell(w_2)+\ell(s_{k+1}\cdots s_{n-1}s_{\bar\alpha_{n}})$.
         Since $v\in W^P$, we have    $v_2(\alpha_j)\in R^+$ for all $j>k$.
   Thus by Corollary \ref{coridentity}, we have
    $$ N_{u'', v_2}^{w_3s_{n-1}\cdots s_{k+1}s_k, 0}= N_{us_{k+1}\cdots s_{n-1}s_{\bar\alpha_{n}}, v_2}^{w_2s_{k+1}\cdots s_{n-1}s_{\bar\alpha_{n}}, 0}=
       N_{u, v_2}^{w_2,  0}. \hspace{2.5cm} (***) $$
In particular, we have $N_{u, v_2}^{w_2,  0}=N_{u^{\prime\prime}, v^{\prime\prime}s_{\bar\alpha_{n}%
}s_{n-1}}^{w^{\prime\prime}s_{n-1}s_{\bar\alpha_{n}},0}\neq 0$.

Now we assume $N_{u, v_2}^{w_2, 0}\neq 0$, which implies  $\ell(u)+\ell(v_2)=\ell(w_2)$.  Note that $(w^{-1}w_2)^{-1}=v^{-1}v_2$ and $\ell(v^{-1}v_2)=\deg t$. Since
   $\ell(u)+\ell(v)=\ell(w)+2\deg t$,
   we have $\ell(w_2)=\ell(w)+\ell(w^{-1}w_2)$ and $\ell(v_2)=\ell(v)-\ell(v^{-1}v_2)$.
 Thus we have $\ell(v_2s_1)=\ell(v_2)+1$ and consequently  $v_2(\alpha_1)\in R^+$.
 Note  $v\in W^P$. It is easy to check that $v_2(\alpha)\in R^+$ for all $\alpha\in \Delta_P\setminus \{\alpha_1\}$.
 Hence, $v_2\in W^P$ and consequently $w_2\in W^P$. Thus $(***)$ follows directly from Corollary \ref{coridentity}.
 Then $(**)$ also follows from Corollary \ref{coridentity}, by noting $\ell(w_3s_{n-1}\cdots s_{k+1}s_k)=\ell(w_3)+\ell(s_{n-1}\cdots s_{k+1}s_k)$ and
     $N_{u'', v_2}^{w_3s_{n-1}\cdots s_{k+1}s_k, 0}\neq 0$.
 Furthermore,  we conclude that $(*)$ holds, by noting $(*')$ and using Corollary \ref{coridentity}.
  In particular, we have $ N_{u^{\prime\prime}, v^{\prime\prime}s_{\bar\alpha_{n}%
}s_{n-1}}^{w^{\prime\prime}s_{n-1}s_{\bar\alpha_{n}},0}=N_{u, v_2}^{w_2, 0}\neq 0$.

    If none of the above two assumptions holds, we still have  $ N_{u^{\prime\prime}, v^{\prime\prime}s_{\bar\alpha_{n}%
}s_{n-1}}^{w^{\prime\prime}s_{n-1}s_{\bar\alpha_{n}},0}=N_{u, v_2}^{w_2, 0}$, both of which vanish.
\end{proof}
\begin{remark}
Recall that the odd orthogonal Grassmannian $OG(n, 2n+1)$ is isomorphic to the
even orthogonal Grassmannian $OG^{o}(n+1, 2n+2)$. It suffices to deal with
either of them. The former case has been covered in the above theorem. The
later case has been dealt with earlier by Kresch and Tamvakis in
\cite{KreschTamvakisortho}.
\end{remark}

As indicated by Lemma \ref{lemmaforthmQPROG}, we can
use the classical Pieri rules given by Pragacz and Ratajski to
interpret the classical intersection numbers $N_{u, v_2}^{w_2, 0}$ explicitly.
For   $N_{u_1, v_1}^{w_1, 0}$,    when  $k=n$  we can make use of the
generalized classical Pieri rules  given by Bergeron and Sottile
 (see Theorem D of \cite{bergSottile}); when $k<n$,
we can use the classical Chevalley formula
for $k\in\{1, 2\}$.  However,  a   classical Pieri formula analogous with the one
given by   Bergeron and Sottile \cite{bergSottile} is still lacking in general. It will be desirable to derive such a formula.

\begin{remark}
 In our proof of   Theorem \ref{thmQPRforOGOG}, we make use of Proposition \ref{propmainthminLeungLi} to reduce
 $N_{u, v}^{w, 1}$ to   a  classical intersection number for the two step flag variety
   $OF(k-1, k+1; N)$ first. For this step, there is another approach    using the well-known fact that  the parameter space of lines
on the Grassmannian $OG(k, N)$ 
is      $OF(k-1,k+1;N)$,
 as pointed out explicitly by Buch, Kresch and Tamvakis in \cite{BKT2}.
\end{remark}

\section{Quantum Pieri rules for Grassmannians of classical types: reformulations in traditional ways}
\label{sectionreformalation}
Historically, the Schubert classes for complex
Grassmannians are labelled by partitions, and the (quantum) Pieri rule therein
expresses the  (quantum) product of a special partition and a general
partition. There are similar notions for isotropic Grassmannians. In this
 section, we  derive  our quantum Pieri rules for Grassmannians of type $C_n$ and $B_n$, by reformulating Theorem \ref{thmQPRforIGIG} and Theorem \ref{thmQPRforOGOG}
  in a   traditional way (i.e. in terms of ``partitions").  
In addition, we use the same notations as in section \ref{subsectisotroGrassm} and always assume $k<n$. The remaining cases will be discussed  in
 section \ref{subsecremarks}.

\subsection{Quantum Pieri rules for Grassmannians of type $C_n$}\label{subsecQPRCCC}
We first review some parameterizations of the minimal length representatives $W^P$ for the isotropic Grassmanniann $G/P=IG(k, 2n)$,  following
 \cite{pragrat} (see also \cite{tamvakis} and \cite{BKT2}).
Each element  $x$ in the permutation group $S_n$ is represented by its image $(x(1), \cdots, x(n))$, or simply $(x_1, \cdots, x_n)$.
The Weyl group $W$ of type $C_n$ is isomorphic to the hyperoctahedral group
$S_{n}\ltimes\mathbb{Z}_{2}^{n}$ of barred permutations, which is an extension
of the permutation group $S_{n}=\langle\dot s_{1}, \cdots, \dot s_{n-1}%
\rangle$ by an element $\dot s_{n}$ acting on the right by $(x_{1}, \cdots,
x_{n})\dot s_{n}=(x_{1}, \cdots, x_{n-1}, \bar x_{n})$. Here $\dot s_{i}$
denote the transposition $(i, i+1)$ for each $1\leq i\leq n-1$.
Each element $w$ in  $W^P$ can be identified with a sequence of the form $(y_1, \cdots, y_{k-m}, \bar z_m, \cdots, \bar z_{1}, v_1, \cdots, v_{n-k})$,
where $y_1<\cdots<y_{k-m}, z_m>\cdots >z_1$ and $v_1<\cdots <v_{n-k}$, as follows. Let $\epsilon_i:=-\mathbf{e}_i\in \{-1, 1\}\mathbf{e}_1\oplus\cdots\oplus \{-1, 1\}\mathbf{e}_n=\mathbb{Z}^n_2$ for each $1\leq i\leq n$. Write $w=u_1s_nu_2s_n\cdots u_js_nu_{j+1}$ where $u_j$'s are all in $S_n$. Denote $a_{i}:=
  (u_{i+1}u_{i+2}\cdots u_{j+1})^{-1}(n)$ for each $1\leq i\leq j$. Then $w\in W$ is identified with the barred permutation $(u_1u_2\cdots u_{j+1}, \epsilon_{a_1}\epsilon_{a_2}\cdots \epsilon_{a_j})\in S_{n}\ltimes\mathbb{Z}_{2}^{n}$. The inequalities among entries in the identified sequence automatically hold as a consequence of the property $w\in W^P$.
The element $w$ in   $W^P$ can also be identified  with
  an element  $\mu=(\mu
^{t}//\mu^{b})$ in the set $\mathcal{P}_{k}$ of \textit{shapes}. That is, $\mu^{t}$ and $\mu^{b}$ are strict partitions inside
$(n-k)$ by $n$ rectangle and $k$ by $n$ rectangle respectively, and they
satisfy the inequality $\mu_{n-k}^{t}\geq\ell(\mu^{b})+1$. Here  $\mu^{t}%
=(\mu_{1}^{t}, \cdots, \mu_{n-k}^{t})$ and $m:=\ell(\mu^{b})$ denotes the
length of the partition $\mu^{b}$.
Precisely, we have $\mu_j^b=n+1-z_j$ for each $1\leq j\leq m$, and $\mu^t_r=n+1-v_r+\sharp\{i~|~ z_i<v_r, i=1, \cdots, m\}$ for each $1\leq r\leq n-k$.
For  such a shape $\mu$ in
$\mathcal{P}_{k}$,  we have $|\mu|:=|\mu^{t}|+|\mu^{b}|-{\binom{n-k+1}{2}}%
$.  There is a particular reduced expression of the corresponding
$w=w_{\mu}\in W^{P}$ with $\ell(w_{\mu})=|\mu|$, given by
\begin{align*}
w_{\mu}= & (s_{n-\mu_{m}^{b}+1}\cdot s_{n-\mu^{b}_{m}+2}\cdot\cdots\cdot
s_{n-1}\cdot s_{n})\cdot\cdots\\
(\star)\qquad\quad& \cdot(s_{n-\mu_{1}^{b}+1}\cdot s_{n-\mu^{b}_{1}+2}\cdot\cdots\cdot
s_{n-1}\cdot s_{n})\cdot(s_{n-\mu_{n-k}^{t}+1}\cdot\cdots\cdot s_{n-2}\cdot
s_{n-1})\\
& \cdot(s_{n-\mu_{n-k-1}^{t}+1}\cdot\cdots\cdot s_{n-3}\cdot s_{n-2}%
)\cdot\cdots\cdot(s_{n-\mu_{1}^{t}+1}\cdot\cdots\cdot s_{k-1}\cdot s_{k}).
\end{align*}
In particular, for the special class $c_{p}(\mathcal{S}^{*})=\sigma^{u}$,
$u=s_{k-p+1}\cdots s_{k}=u_{\mu}$ corresponds to the special shape
$\mu=((n-k+p, n-k-1, \cdots, 1)//\emptyset)$.  Usually, such a special $\mu$  is   simply
denoted as $p\in\mathcal{P}_{k}$.
We note that the quantum
Pieri rules with respect to the Chern classes $c_{i}(\mathcal{Q})
$ (where $1\leq i\leq2n-k$)  have been given by Buch,
Kresch and Tamvakis \cite{BKT2}.
\begin{remark}
In terms of notations in
\cite{tamvakis},  $c_{i}(\mathcal{Q}%
)=\sigma^{u^{\prime}}$ (up to a scale factor of $2$) with $u^{\prime}$  corresponding to
the special shape $(1^{\min(i, n-k)}|\max(i-n+k, 0))$. In general, the notation of shapes $(\mathbf{a} | \mathbf{b})$ in \cite{tamvakis} 
          is slightly different from the notation $(\mu^t//\mu^b)$ in \cite{pragrat}. For the same $w\in W^P$, we have $\mathbf{b}=\mu^b$ and
           $\mathbf{a}=(a_1, \cdots, a_{n-k})$ with $a_r=\mu^t_r+r-n+k-1$ for each $1\leq r\leq n-k$. In addition, $w$ can also be identified with the
            $(n-k)$-strict partition  $\mathbf{a}'+\mathbf{b}$ introduced in \cite{BKT2}, where $\mathbf{a}'$ is the transpose of $\mathbf{a}$.
\end{remark}

Let $\tilde P$ denote the standard   parabolic subgroup that corresponds to the subset $\Delta\setminus\{\alpha_{k-1}\}$.
  The minimal length representatives $W^{\tilde P}$
 are identified with  shapes in $\mathcal{P}%
_{k-1}$. (Note   $G/\tilde P=IG(k-1, 2n)$ in this subsection.)  To reformulate Theorem \ref{thmQPRforIGIG} and Theorem \ref{thmQPRforOGOG} in terms of shapes,
 we need the following lemma, which tells us about the explicit identifications. 

\begin{lemma}
\label{lemmaforPieriidenti} 
\begin{enumerate}

\item  $s_{k-p+1}\cdots s_{k-1}$  corresponds to the special shape $p-1\in\mathcal{P}_{k-1}$.

\item Let  $v\in W^P$ correspond  to  $\mathbf{a}=({\mathbf{a}}^{t}//{\mathbf{a}}^{b})\in \mathcal{P}_k$.
  Denote  $m:=\ell(\mathbf{a}^{b})$,  $\gamma:=\alpha_n+2\sum_{j=k}^{n-1}\alpha_j$,
 $v_1:=vs_\gamma s_k$ and
    $ v_{2}:=vs_{k}\cdots s_{n-1}s_{{n}} s_{n-1}\cdots s_1$.
  \begin{enumerate}
    \item  $\ell(vs_{\gamma})=\ell(v)-2n+2k-1$   if and only if $a_1^b\geq a_1^t$. Furthermore when this holds,
  $vs_{\gamma}\in W^{\tilde P}$ corresponds to the shape
   $$\tilde{\mathbf{a}}%
=((a_{1}^{b}, a_{1}^{t}-1,
a_{2}^{t}-1, \cdots, a_{n-k}^{t}-1) //(a_{2}^{b},
a_{3}^{b}, \cdots, a_{ m}^{b}))\in \mathcal{P}_{k-1}.$$

  \item $\ell(v_1)=\ell(v)-2n+2k$   if and only if $a_1^b\geq a_2^t$. Furthermore  if this holds and  $\ell(vs_\gamma)=\ell(v_1)+1$,
        then  $a_1^t>a_1^b$ and  $v_1\in W^{\tilde P}$  corresponds to {} 
              $$\bar{\mathbf{a}}= ((a_{1}^{t}, a_{1}^{b}, a_{2}^{t}-1, \cdots, a_{n-k}^{t}-1) //(a_{2}^{b}, a_{3}^{b}, \cdots, a_{ m}^{b}))\in \mathcal{P}_{k-1}.$$

   \item  If $v_2\in W^P$ and  $\ell(v_2)=\ell(v)-\ell(v^{-1}v_2)$, then $a_1^b=n$ and $v_2$  corresponds to the shape
        $$\hat{\mathbf{a}}=((a_{1}^{t}-1, a_{2}^{t}-1, \cdots, a_{n-k}^{t}-1) //(a_{2}^{b}, a_{3}^{b}, \cdots, a_{ m}^{b}))\in \mathcal{P}_{k}.$$
 \end{enumerate}

\item Let $w\in W^P$ correspond to   $\mathbf{c}=({\mathbf{c}}^{t}//{\mathbf{c}}^{b})\in \mathcal{P}_k$. Denote $w_1:=ws_1\cdots s_{k-1}$.
   \begin{enumerate}
     \item $w_1 \in W^{\tilde P}$ if and only if $c_1^t<n$. Furthermore when this holds,
                   $w_1$ corresponds to the shape
    $$\tilde
{\mathbf{c}}=((n, c_{1}^{t}, c_{2}^{t},
\cdots, c_{n-k}^{t})// {\mathbf{c}}^{b})\in \mathcal{P}_{k-1}.$$ 

     \item  If $w_1s_k \in W^{\tilde P}$ if and only if  $c_1^t=n$, $c_2^t\leq n-2$. Furthermore when this holds,  $w_1s_k$ corresponds to
    $$\bar
{\mathbf{c}}=((n, n-1, c_{2}^{t},
\cdots, c_{n-k}^{t})// {\mathbf{c}}^{b})\in \mathcal{P}_{k-1}.$$ 

     \end{enumerate}

\item  Let $w'\in W^P$ correspond to   $\mathbf{d}=({\mathbf{d}}^{t}//{\mathbf{d}}^{b})\in \mathcal{P}_k$. Denote   $m':=\ell(\mathbf{d}^b)$ and
  $w_{2}:=w's_{1}\cdots s_{n-1} s_{{n}}s_{n-1}\cdots s_{k}$.
     If $w_2\in W^P$ and  $\ell(w_2)=\ell(w')+\ell(s_{1}\cdots s_{n-1} s_{{n}}s_{n-1}\cdots s_{k})$, then  $d_1^b<n$ and $w_2$  corresponds to the shape
      $$\hat{\mathbf{d}}=((d_{1}^{t}+1, d_{2}^{t}+1, \cdots, d_{n-k}^{t}+1) //(n, d_1^b, d_{2}^{b},  \cdots, d_{ m'}^{b}))\in \mathcal{P}_{k}.$$

 \end{enumerate}

\end{lemma}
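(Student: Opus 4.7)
The plan is to work throughout in the barred permutation (one-line) notation for the Weyl group of type $C_n$, using the dictionary recalled at the start of Section 4: an element $v \in W^P$ is identified with $(y_1,\ldots,y_{k-m},\bar z_m,\ldots,\bar z_1, v_1,\ldots,v_{n-k})$ where $y_1<\cdots<y_{k-m}$, $z_m>\cdots>z_1$, $v_1<\cdots<v_{n-k}$; and these data convert to the shape $\mathbf{a}=(\mathbf{a}^t//\mathbf{a}^b)$ via $\mathbf{a}^b_j = n+1-z_j$ and $\mathbf{a}^t_r = n+1-v_r+\#\{i : z_i<v_r\}$. The crucial operational fact is that right multiplication by a simple reflection acts on the one-line notation by: $s_i$ ($i<n$) swaps the entries in positions $i$ and $i+1$, and $s_n$ negates (adds/removes a bar on) the entry in position $n$. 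Consequently $s_\gamma = s_{2e_k}$ simply negates the entry in position $k$.

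Part (1) is obtained directly from the reduced expression $(\star)$ applied to the special shape $p-1\in\mathcal{P}_{k-1}$: here $\mu^b=\emptyset$ and $\mu^t=(n-k+p,n-k,n-k-1,\ldots,1)$, so all but the last of the top factors collapse to empty products and the surviving factor is exactly $s_{k-p+1}\cdots s_{k-1}$. For part (2)(a), I would compute $vs_\gamma$ by negating the entry in position $k$ and then re-sorting positions $1,\ldots,k$ into the $W^{\tilde P}$-form; the length drops by the full $\ell(s_\gamma)=2n-2k+1$ precisely when the re-sorting is a single "cyclic" shift, which translates into the numerical inequality $v_1\geq z_1+\#\{i:z_i<v_1\}$, i.e.\ $a_1^b\geq a_1^t$. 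Reading off the new barred permutation and applying the $(\mathbf{a}^t,\mathbf{a}^b)$-dictionary with $k$ replaced by $k-1$ yields $\tilde{\mathbf{a}}$. Part (2)(b) is the same calculation composed with one further swap $s_k$: the additional condition $a_1^t>a_1^b$ ensures a genuine swap happens, and the condition $a_1^b\geq a_2^t$ ensures the result lies in $W^{\tilde P}$. For (2)(c), the product $s_k s_{k+1}\cdots s_n s_{n-1}\cdots s_1$ cyclically moves the entry in position $k$ to position $n$, negates it, then cycles the result back to position $1$; the assumption that $v_2\in W^P$ with the length adding correctly forces the moved entry to have been $\overline{1}$ (equivalently $z_1=1$, i.e.\ $a_1^b=n$), after which the resulting shape $\hat{\mathbf{a}}$ is read off directly.

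Parts (3) and (4) are dual calculations on the $w$-side. For (3)(a), right multiplication by $s_1\cdots s_{k-1}$ performs a cyclic shift on positions $1,\ldots,k$; staying in $W^{\tilde P}$ requires the pushed-out entry in position $k$ to be larger than what lies in position $k+1$, which is exactly $c_1^t<n$. Part (3)(b) then composes with $s_k$, whose effect forces $c_1^t=n$ and $c_2^t\leq n-2$. Part (4) uses the same $s_k\cdots s_n\cdots s_1$-type cycling as in (2)(c) but on $w'$, with the $W^P$-hypothesis and the length additivity forcing $d_1^b<n$ and producing $\hat{\mathbf{d}}$. The main obstacle throughout is the bookkeeping: one must split each case according to whether position $k$ is barred or unbarred (i.e.\ $m\geq 1$ or $m=0$) and verify that inversions and signed inversions are added or removed as claimed; this is essentially a finite case analysis, and the cleanest way to control it is to check both the length statement (via inversion counts) and the $W^{\tilde P}$ or $W^P$ membership condition (via positivity of $w(\alpha)$ for $\alpha$ in the respective $\Delta_{\tilde P}$ or $\Delta_P$) before converting back to shape coordinates.
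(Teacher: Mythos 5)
Your approach is sound and genuinely different from the paper's. The paper carries out the computation entirely at the level of reduced words: it takes the explicit reduced expression $(\star)$ for $w_\mu\in W^P$ built from the shape $\mu$, and then uses Lemma~\ref{lemmaprodAtype} to push the factor $s_{n-1}\cdots s_{k+1}s_k$ (resp.\ $s_1\cdots s_{k-1}$, etc.) through $(\star)$, reading off from the surviving reduced word both the length condition and the shape of the resulting element in $\mathcal{P}_{k-1}$ or $\mathcal{P}_k$. You instead work at the level of barred-permutation one-line notation, tracking how each right-multiplication by a simple reflection transposes or negates entries, and deciding the length and $W^{\tilde P}$/$W^P$ membership by inversion counts and signed-order comparisons. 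Both methods are finite bookkeeping; yours is arguably more elementary and visual, while the paper's reuses the machinery (the formula $(\star)$ and Lemma~\ref{lemmaprodAtype}) already set up for the rest of Section~4. I verified that your numerical translations of the conditions, e.g.\ $z_1<v_1$ versus $a_1^b\geq a_1^t$ in (2a), $z_1<v_2$ versus $a_1^b\geq a_2^t$ in (2b), and the forcing of $z_1=1$ in (2c) from the two hypotheses together, all check out against the dictionary $a^b_j=n+1-z_j$, $a^t_r=n+1-v_r+\#\{i:z_i<v_r\}$.

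One slip to fix: in (3)(a) you write that ``staying in $W^{\tilde P}$ requires the pushed-out entry in position $k$ to be larger than what lies in position $k+1$.'' The condition $w_1(\alpha_k)\in R^+$ forces position $k$ to be \emph{smaller} than position $k+1$ in the signed order $1<2<\cdots<n<\bar n<\cdots<\bar 1$, i.e.\ $y_1<v_1$; you should also note that when $w(1)$ is barred ($k=m$) the condition fails automatically. Your stated conclusion $c_1^t<n$ is nevertheless correct, so this is an exposition error rather than a gap. Finally, the sketch leaves most of the case analysis (e.g.\ the $m=0$ versus $m\geq 1$ split, and the precise inversion counts behind ``the length drops by the full $2n-2k+1$'') implicit; a complete write-up would need to spell those out, just as the paper's proof does for (2a) before declaring the remaining parts ``similar.''
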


\begin{proof}
    Clearly, statement (1)  follows.

Note that  for every case in Lemma \ref{lemmaprodAtype},   the expression on the right-hand side of the equality  is reduced.
     Furthermore, we note that
  $\ell(s_\gamma)=2n-2k+1$ and $vs_\gamma=vs_{k}s_{k+1}\cdots s_{n-1}s_{n}s_{n-1}\cdots s_{k+1}s_{k}=v_1s_k$.
Hence, $\ell(vs_\gamma)=\ell(v)-\ell(s_\gamma)$ holds if and only if
   both $\ell(vs_ks_{k+1}\cdots s_n)=\ell(v)-\ell(s_ks_{k+1}\cdots s_n)$ and $\ell(vs_\gamma)=\ell(vs_ks_{k+1}\cdots s_n)-\ell(s_{n-1}\cdots s_{k+1}s_k)$ hold.
Using $(\star)$ and Lemma \ref{lemmaprodAtype},  we have
 \begin{align*}
vs_\gamma=&(vs_ks_{k+1}\cdots s_n)\cdot (s_{n-1}\cdots s_{k+1}s_k) \\
      = & \big((s_{n-a_{ m}^{b}+1} \cdots
s_{n-1}  s_{n})\cdots (s_{n-a_{2}^{b}+1}  \cdots
s_{n-1}  s_{n})(s_{n-a_{1}^{b}+1}  \cdots
s_{n-2}s_{n-1})\\
&\cdot(s_{n-a_{n-k}^{t}+1} \cdots  s_{n-3}s_{n-2})\cdots(s_{n-a_{1}^{t}+1} \cdots  s_{k-2}s_{k-1}) \big)\cdot (s_{n-1}\cdots s_{k+1}s_{k})\\
       = & (s_{n-a_{ m}^{b}+1} \cdots
s_{n-1}  s_{n})\cdots (s_{n-a_{2}^{b}+1}  \cdots
s_{n-1}  s_{n})\cdot (s_{n-a_{n-k}^{t}+2}  \cdots
s_{n-2}s_{n-1})\\
&\cdot(s_{n-a_{n-k-1}^{t}+2} \cdots  s_{n-3}s_{n-2})\cdots(s_{n-a_{1}^{t}+2} \cdots  s_{k-1}s_{k}) \cdot (s_{n-a_1^b+1}\cdots s_{k-2}s_{k-1}),
\end{align*}
 the right-hand side of the last equality in which
   gives a reduced expression of $vs_\gamma$.
In other words, we have $n-a_1^b+1\leq \min\{n-a_{n-k}^t+1, \cdots, n-a_2^t+1, n-a_1^t+1, n\}=n-a_1^t+1$. That is, $a_1^b\geq a_1^t$. Since
  $\mathbf{a}\in\mathcal{P}_k$, we have
 $\tilde{\mathbf{a}}=((a_{1}^{b}, a_{1}^{t}-1,
a_{2}^{t}-1, \cdots, a_{n-k}^{t}-1)$ $//(a_{2}^{b},
a_{3}^{b}, \cdots, a_{ m}^{b}))\in \mathcal{P}_{k-1}$,  corresponding to $vs_\gamma$. Thus statement (2a) holds.

The remaining parts are also consequences of the formula $(\star)$ and  Lemma \ref{lemmaprodAtype}.   The arguments for them  are also similar.
 \end{proof}

For any shapes $\mu, \nu\in\mathcal{P}_{k}$, we denote by $e_k(\mu, \nu)$ 
 the cardinality of  the
set of \textit{components that are not extremal, not related and have no
$(\nu-\mu)$-boxes over them}. The relevant notions, together with the notion
of ``\textit{$\nu$ compatible with $\mu$}", can be found on page 152 and page
153 of \cite{pragrat}.  We always skip the subscription part of  $e_k(\mu, \nu)$, whenever    $e(\mu, \nu)$ is well understood.
    In addition, we denote the Schubert cohomology class
$\sigma^{w_{\mu}}$ as $\sigma^{\mu}$. The following   classical Pieri rule was given by
  Pragacz and Ratajski.

\begin{prop}[Theorem 2.2 of \cite{pragrat}]
  \label{propCPRCCC}
  For every  $\mathbf{a}\in\mathcal{P}_{k}$ and $p\leq k$, we have
\[
\sigma^{p}\star\sigma^{\mathbf{a}}=\sum2^{e(\mathbf{a}, \mathbf{b})}%
\sigma^{\mathbf{b}},
\]
where the  sum is over all shapes $\mathbf{b}\in\mathcal{P}_{k}$ compatible with
$\mathbf{a}$ such that $|\mathbf{b}|=|\mathbf{a}|+p$.
\end{prop}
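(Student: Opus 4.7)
The plan is to prove the classical Pieri rule geometrically and then analyze the combinatorial multiplicity factor $2^{e(\mathbf{a},\mathbf{b})}$. First, I would realize the special Schubert class $\sigma^p = c_p(\mathcal{S}^*)$ as the class of a degeneracy locus; concretely, $\sigma^p$ is Poincar\'e dual to the locus of $[V] \in IG(k,2n)$ whose intersection with a fixed isotropic subspace of appropriate dimension is non-trivial. The general class $\sigma^{\mathbf{a}}$ is realized as the Schubert variety $X_{\mathbf{a}}(F_\bullet)$ cut out by explicit dimension conditions on intersections of $V$ with members of a fixed isotropic flag $F_\bullet$.

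Next, for flags $F_\bullet, F'_\bullet$ in general position, the structure constant of $\sigma^p \cup \sigma^{\mathbf{a}}$ at $\sigma^{\mathbf{b}}$ equals the intersection number of $X_p(F'_\bullet)$, $X_{\mathbf{a}}(F_\bullet)$, and a Schubert variety dual to $X_{\mathbf{b}}$. I would apply a standard Pieri-type degeneration, moving $F'_\bullet$ through a controlled one-parameter family into a special position relative to $F_\bullet$, and take the flat limit of the intersection cycle. A dimension count and the local geometry of isotropic flags force the resulting cycle to be supported on Schubert varieties $X_{\mathbf{b}}$ with $|\mathbf{b}| = |\mathbf{a}|+p$, and the combinatorial conditions characterizing which $\mathbf{b}$ appear are precisely the \emph{compatibility} conditions: one is adding $p$ boxes to the diagram of $\mathbf{a}$ subject to the isotropy constraint, and the compatibility rule records which additions survive in the limit.

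Finally, the multiplicity at each surviving component is computed by local analysis at a generic point of the intersection. The skew diagram $\mathbf{b}/\mathbf{a}$ decomposes into connected components, and for each component that is non-extremal, not related to another component, and carries no $(\mathbf{b}-\mathbf{a})$-box above it, the isotropy condition produces a local symplectic symmetry identifying two distinct configurations that would otherwise be independent. Each such free component therefore contributes a factor of $2$ to the intersection multiplicity, giving the total $2^{e(\mathbf{a},\mathbf{b})}$.

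The main obstacle will be the rigorous verification of the multiplicity factor: one must carefully track tangent spaces at a generic point of each irreducible component of the limit cycle and pin down exactly when the skew-symmetric form forces two independent-looking degrees of freedom to collapse onto a single flat family of multiplicity two. An alternative that sidesteps some of this local intersection theory would be a purely algebraic argument using Schur-$P$/$Q$ polynomial identities, but that would require an independent proof of the Giambelli formula for isotropic Grassmannians and a combinatorial identification of the Littlewood-Richardson-type coefficients with the $2^{e(\mathbf{a},\mathbf{b})}$ multiplicities, so the difficulty is merely displaced rather than removed.
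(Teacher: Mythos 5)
This proposition is not proved in the paper at all: it is quoted verbatim from the literature (Theorem~2.2 of Pragacz--Ratajski \cite{pragrat}), and the paper uses it as an input to \emph{its} new result, the quantum Pieri rule of Theorem~\ref{thmQPRforIGIG222}. So there is no internal proof to compare your attempt against; the expected ``proof'' here is simply a citation. If your goal was to supply a standalone argument, be aware that what you have written is an outline of an intersection-theoretic degeneration strategy rather than a proof: the two places where all the substance lives --- identifying the components of the flat limit cycle with the compatibility condition on $\mathbf{b}$, and computing the multiplicity $2^{e(\mathbf{a},\mathbf{b})}$ at a generic point of each component --- are exactly the places you flag as the ``main obstacle'' and do not resolve. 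In particular, the claim that the combinatorial conditions in the compatibility definition (non-extremal, not related, no $(\nu-\mu)$-boxes over a component) transcribe directly into a local symplectic symmetry doubling the multiplicity is precisely the nontrivial content, and asserting it is not the same as proving it.

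It is also worth noting that your sketch is not the route Pragacz and Ratajski actually took. Their proof is algebraic, built on the theory of $\tilde Q$-polynomials and Gysin-map computations (see also their paper \cite{pragratQpoly}), rather than on a moving-flag/flat-limit degeneration. A purely geometric degeneration proof in the style you outline is possible and has been pursued in related settings, but it requires a careful limit-cycle analysis that your sketch postpones. The second alternative you mention (Schur $P/Q$-polynomial identities plus a Giambelli formula) is in fact closer in spirit to what Pragacz--Ratajski do, so the ``difficulty is displaced'' caveat you attach to it is not quite right --- that is the approach for which the difficulty was actually carried out in the original reference.
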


Combining  the above proposition and (statement (1), (2a), (3a) of) Lemma \ref{lemmaforPieriidenti},
we can reformulate Theorem \ref{thmQPRforIGIG}  directly as follows.
\begin{thm}
[Quantum Pieri rule for $IG(k, 2n)$]\label{thmQPRforIGIG222}  Let $
\mathbf{a}\in\mathcal{P}_{k}$ and $p\leq k$. Using the same notations as in Lemma
\ref{lemmaforPieriidenti}, we have
\[
\sigma^{p}\star\sigma^{\mathbf{a}}=\sum2^{e(\mathbf{a}, \mathbf{b})}%
\sigma^{\mathbf{b}} + t\sum2^{e(\tilde{\mathbf{a}}, \tilde{\mathbf{c}})}%
\sigma^{\mathbf{c}},
\]
where the first summation is over all shapes $\mathbf{b}\in\mathcal{P}_{k}$
with $|\mathbf{b}|=|\mathbf{a}|+p$ such that $\mathbf{b}$ is compatible with
$\mathbf{a}$,  and the second summation is over all shapes $\mathbf{c}%
\in\mathcal{P}_{k}$ with $|\mathbf{c}|=|\mathbf{a}|+p-2n+k-1$ such that
$\tilde{\mathbf{c}}\in\mathcal{P}_{k-1}$ is compatible with $\tilde{\mathbf{a}}%
\in\mathcal{P}_{k-1}$. (Here we denote the second sum as $0$ if $\tilde
{\mathbf{a}}\not \in \mathcal{P}_{k-1}$.)
\end{thm}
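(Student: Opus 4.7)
The plan is to reduce Theorem \ref{thmQPRforIGIG222} to Theorem \ref{thmQPRforIGIG} using the combinatorial dictionary packaged in Lemma \ref{lemmaforPieriidenti}, and then invoke the classical Pieri rule of Pragacz--Ratajski (Proposition \ref{propCPRCCC}) twice: once on $IG(k,2n)$ for the classical summand and once on $IG(k-1,2n)$ for the quantum summand.

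First I would dispose of the classical contribution $\sigma^{p}\cup\sigma^{\mathbf{a}}$: this is literally Proposition \ref{propCPRCCC} and yields the first summation $\sum 2^{e(\mathbf{a},\mathbf{b})}\sigma^{\mathbf{b}}$ immediately. Theorem \ref{thmQPRforIGIG} then reduces the quantum contribution to
\[
t\sum_{w\in W^{P}} N_{us_{k},vs_{\gamma}}^{ws_{1}\cdots s_{k-1},0}\sigma^{w}
\]
under the hypothesis $\ell(vs_\gamma)=\ell(v)-2n+2k-1$, and to zero otherwise. Lemma \ref{lemmaforPieriidenti}(2a) translates this length hypothesis into the inequality $a_1^b\geq a_1^t$, which is precisely the condition that $\tilde{\mathbf{a}}$ is a legitimate element of $\mathcal{P}_{k-1}$; when it holds, the same lemma identifies $vs_\gamma\in W^{\tilde P}$ with the shape $\tilde{\mathbf{a}}$. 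Meanwhile Lemma \ref{lemmaforPieriidenti}(1) identifies $us_k$ with the special shape $p-1\in\mathcal{P}_{k-1}$.

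Next I would interpret $N_{us_k,vs_\gamma}^{ws_1\cdots s_{k-1},0}$ as a classical intersection number on $G/\tilde P=IG(k-1,2n)$. Because both $us_k$ and $vs_\gamma$ lie in $W^{\tilde P}$, the cup product $\sigma^{us_k}\cup\sigma^{vs_\gamma}$ in $H^*(G/B)$ is the pullback via $\pi\colon G/B\to G/\tilde P$ of $\sigma^{p-1}\cup\sigma^{\tilde{\mathbf{a}}}$ in $H^*(IG(k-1,2n))$; hence only Schubert classes indexed by $W^{\tilde P}$ appear, so the coefficient of $\sigma^{ws_1\cdots s_{k-1}}$ is automatically zero unless $ws_1\cdots s_{k-1}\in W^{\tilde P}$. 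By Lemma \ref{lemmaforPieriidenti}(3a), this forces $c_1^t<n$ and identifies $ws_1\cdots s_{k-1}$ with the shape $\tilde{\mathbf{c}}\in\mathcal{P}_{k-1}$. When these conditions hold, the coefficient equals the classical Pieri coefficient of $\sigma^{\tilde{\mathbf{c}}}$ in $\sigma^{p-1}\cup\sigma^{\tilde{\mathbf{a}}}$, which Proposition \ref{propCPRCCC} evaluates as $2^{e(\tilde{\mathbf{a}},\tilde{\mathbf{c}})}$ whenever $\tilde{\mathbf{c}}$ is compatible with $\tilde{\mathbf{a}}$.

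A final routine dimension check, using the identity $|\mu|=|\mu^t|+|\mu^b|-\binom{n-k+1}{2}$ for $\mu\in\mathcal{P}_k$, converts the Pragacz--Ratajski degree constraint $|\tilde{\mathbf{c}}|=|\tilde{\mathbf{a}}|+(p-1)$ in $\mathcal{P}_{k-1}$ into $|\mathbf{c}|=|\mathbf{a}|+p-2n+k-1$ in $\mathcal{P}_k$, matching $\deg t=2n+1-k$ and confirming the stated range of summation. The main obstacle is really just the careful bookkeeping translating between Weyl-group data (length conditions, cosets $W^P$ vs $W^{\tilde P}$) and shape data (membership in $\mathcal{P}_{k-1}$, strictness, compatibility); since all this translation has already been encapsulated in Lemma \ref{lemmaforPieriidenti}, the remaining argument amounts to transparent assembly.
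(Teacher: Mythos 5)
Your proposal is correct and follows essentially the same route as the paper: combine Theorem \ref{thmQPRforIGIG}, the shape dictionary of Lemma \ref{lemmaforPieriidenti} (statements (1), (2a), (3a), with Lemma \ref{lemTFAE} feeding into (2a)), and the Pragacz--Ratajski Pieri rule (Proposition \ref{propCPRCCC}) applied once on $IG(k,2n)$ and once on $IG(k-1,2n)$. You usefully make explicit the pullback argument via $\pi\colon G/B\to G/\tilde P$ showing that $N_{us_k,vs_\gamma}^{ws_1\cdots s_{k-1},0}$ is the $IG(k-1,2n)$ Pieri coefficient, which the paper leaves implicit.
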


\subsection{Quantum Pieri rules for Grassmannians of type $B_n$}

The Weyl group $W$ of type $B_n$ is also isomorphic to the hyperoctahedral group
$S_{n}\ltimes\mathbb{Z}_{2}^{n}$ of barred permutations. 
The minimal length representatives $W^P$ for the isotropic Grassmanniann $G/P=OG(k, 2n+1)$ can also be
identified with shapes in $\mathcal{P}_k$ as well as other parameterizations   described for $IG(k, 2n)$ in the same way as in section \ref{subsecQPRCCC}.
Therefore we can use all the same notations as in section \ref{subsecQPRCCC} but  replace   $IG(k, 2n)$ by  $OG(k, 2n+1)$.

In this subsection, we obtain our quantum Pieri rule  for $OG(k, 2n+1)$ 
(which may involve signs in some cases), by reformulating part of Theorem \ref{thmQPRforOGOG}.
For any shapes $\mu, \nu\in\mathcal{P}_{k}$, we denote by  $e'(\mu, \nu)$  the cardinality of the
set of \textit{components that are not  related and have no
$(\nu-\mu)$-boxes over them}.  The following  classical Pieri rule for  $OG(k, 2n+1)$ was also given by  Pragacz and Ratajski.
\begin{prop}[Theorem 10.1 of \cite{pragrat}]
  \label{propCPRBBB}
  For every  $\mathbf{a}\in\mathcal{P}_{k}$ and $p\leq k$, we have
\[
\sigma^{p}\star\sigma^{\mathbf{a}}=\sum2^{e'(\mathbf{a}, \mathbf{b})}%
\sigma^{\mathbf{b}},
\]
where the  sum is over all shapes $\mathbf{b}\in\mathcal{P}_{k}$ compatible with
$\mathbf{a}$ such that $|\mathbf{b}|=|\mathbf{a}|+p$.
\end{prop}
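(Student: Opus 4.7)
The plan is to reduce this classical statement to an iterated Chevalley computation in $H^{\ast}(G/B)$ and then match the result with the shape combinatorics. The Peterson--Woodward comparison (Proposition \ref{comparison}) applied with $\lambda_P = 0$ gives $\lambda_B = 0$ and $\omega_P\omega_{P'} = \mathrm{id}$, so every classical intersection number $N^{w,0}_{u,v}$ computed in $H^{\ast}(G/P)$ coincides with the same number computed in $H^{\ast}(G/B)$ whenever $u,v,w \in W^P$. Thus it suffices to compute $\sigma^{u} \cup \sigma^{v}$ in $H^{\ast}(G/B)$ with $u = s_{k-p+1}\cdots s_k$ and $v = w_{\mathbf{a}}$, and then extract the Schubert classes indexed by $W^P$.

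First I would apply the classical Chevalley formula (Proposition \ref{quanchevalley} with the quantum terms discarded) iteratively: starting from $\sigma^{v}$, multiply successively by $\sigma^{s_{k-p+1}}, \ldots, \sigma^{s_{k}}$, at each step replacing $\sigma^{v'}$ by $\sum_{\gamma}\langle \chi_j, \gamma^{\vee}\rangle \sigma^{v' s_{\gamma}}$ over $\gamma \in R^+$ with $\ell(v' s_{\gamma}) = \ell(v') + 1$. The terms contributing to $\sigma^{u}\cup\sigma^{v}$ correspond to reduced chains whose cumulative product realizes $u$ with its factorization $s_{k-p+1}\cdots s_k$; ambiguities coming from alternative reduced words for $u$ can be handled by a Pieri-style cancellation or by a generating-function argument.

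Second I would parametrize those reduced chains which end in $W^P$ using the explicit expression $(\star)$ for $w_{\mathbf{b}}$ together with Lemma \ref{lemmaprodAtype}. This should yield a bijection between surviving chains and a combinatorial datum on the skew shape $\mathbf{b}/\mathbf{a}$, automatically forcing both $|\mathbf{b}| = |\mathbf{a}|+p$ and the compatibility of $\mathbf{b}$ with $\mathbf{a}$.

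The main obstacle is producing the exponential factor $2^{e'(\mathbf{a}, \mathbf{b})}$. In type $B_n$, a short positive root $\gamma = \epsilon_i$ (for $i < n$) satisfies $\gamma^{\vee} = 2\epsilon_i$, so $\langle \chi_j, \gamma^{\vee}\rangle = 2$ whenever $i \le j < n$. Each such doubling encountered by the chain as it crosses the short-root tail of $B_n$ at a non-extremal position should correspond geometrically to a non-related component of $\mathbf{b}/\mathbf{a}$ carrying no $(\mathbf{b}-\mathbf{a})$-box above it. Making this correspondence precise, and verifying that every compatible pair of the prescribed size is realized by exactly $2^{e'(\mathbf{a}, \mathbf{b})}$ chains, is the combinatorial heart of the argument; the remaining bookkeeping is essentially that already encoded in $(\star)$ and Lemma \ref{lemmaprodAtype}.
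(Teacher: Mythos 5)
This Proposition is quoted from Pragacz and Ratajski (Theorem 10.1 of \cite{pragrat}); the paper supplies no proof of its own, so there is no in-paper argument to compare against. Taking your sketch on its own terms, there is a gap at the very first step: the iterated Chevalley expansion computes $\sigma^{s_{k-p+1}} \cup \cdots \cup \sigma^{s_k} \cup \sigma^{\mathbf{a}}$ rather than $\sigma^{u} \cup \sigma^{\mathbf{a}}$ with $u = s_{k-p+1}\cdots s_k$. The $p$-fold cup product of simple-reflection classes decomposes as $\sigma^{u}$ plus strictly positive contributions from other length-$p$ Weyl group elements (not all of them in $W^{P}$), so the chains you enumerate over-count. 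You defer the correction to ``a Pieri-style cancellation or a generating-function argument,'' but isolating the $\sigma^u$-contribution from this positive sum is itself a Pieri-type problem; without a concrete mechanism the plan is circular. (The paper's own Lemma \ref{lemforBBBCCCDD} exploits the positivity of $\big(\sigma^{s_j}\big)^i = \sigma^{u'} + \cdots$ only to derive a vanishing statement, precisely because that one-sided use sidesteps the cancellation issue.)

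The second gap is that the claimed bijection between surviving chains and shapes $\mathbf{b}$ compatible with $\mathbf{a}$, with multiplicity exactly $2^{e'(\mathbf{a},\mathbf{b})}$, is the entire content of the theorem. Your observation that short coroots $\gamma^{\vee} = 2\epsilon_i$ in type $B_n$ give $\langle\chi_j,\gamma^{\vee}\rangle = 2$ for $i\le j<n$ correctly locates where factors of $2$ can enter, but the assertion that ``each such doubling should correspond to a non-related component carrying no $(\mathbf{b}-\mathbf{a})$-boxes above it'' is a heuristic, not an argument. Matching a chain enumeration to Pragacz and Ratajski's compatibility, extremality, and relatedness conditions is exactly the hard combinatorics of their proof, and no part of your proposal carries it out. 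As written, this is a plausible roadmap, not a proof.
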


\begin{remark}
   The rational cohomology of the complete flag varieties of type $B_n$ and $C_n$ are isomorphic to each other. As a consequence, there is a  relationship  between the
   classical intersection numbers for these two flag varieties, which was explicitly described in section 3 of \cite{bergSottile} (see also section 2.2 of \cite{BKT2}).
   This provides one way to obtain the information on classical intersection numbers for   $OG(k, 2n+1)$ from
   that for $IG(k, 2n)$.
\end{remark}

Recall that   the minimal length representatives $W^{\tilde P}$
are identified with  shapes in $\mathcal{P}_{k-1}$. (Note   $G/\tilde P=OG(k-1, 2n+1)$ in this subsection.)

\begin{defn}
 Let $\{\mathbf{e}_1, \cdots, \mathbf{e}_n\}$ denote
      the canonical basis of $\mathbb{Z}^{n}$ and  $\vec{\mu}\in \mathbb{Z}^{n}$ denote the canonical vector associated to a given  $\mu=(\mu^t//\mu^b)\in \mathcal{P}_{k-1}$; that is,
     $\vec{\mu}=\sum_{i=1}^{n-k+1}\mu_i^t\mathbf{e}_i+ \sum_{j=1}^m \mu_j^b\mathbf{e}_{n-k+1+j}$ where $m:=\ell(\mu^b)$. We define the sets $S(\mu), \Gamma_1(\mu)$ and
      $\Gamma_2(\mu)$ associated to $\mu$ as follows.
    \begin{align*} S(\mu):=&\{\nu\in\mathcal{P}_{k-1}~|~ \vec{\nu}=\vec{\mu}-\mathbf{e}_{j} \mbox{ for some } 2\leq j\leq n-k+1+m\}\bigcup\\
                   &\left\{\nu\in\mathcal{P}_{k-1}\Bigg| \begin{cases}\vec{\nu}=\vec{\mu}-(f+1)\mathbf{e}_{n-k+1+j}+f\mathbf{e}_{i}\\
                          \mu_i^t=\mu_j^b+j-f-1\end{cases} \hspace{-0.4cm}\mbox{for some} \begin{cases}
                     f>0,\, 1\leq j\leq m\\ 1\leq i\leq n-k+1 \end{cases}\hspace{-0.2cm}\right\}; \\
   \Gamma_2(\mu):=&\left\{\nu\in\mathcal{P}_{k-1}~\Bigg|~ \exists 1\leq j\leq m  \mbox{ such that } \begin{cases}\vec{\nu}=\vec{\mu}-\mathbf{e}_{n-k+1+j}\\
                                        \mu^i_t\neq \mu_j^t+j-1, \,\,\forall \,2\leq i\leq n-k+1\end{cases}\right\}\\
                 &\bigcup\left\{\nu\in\mathcal{P}_{k-1}\Bigg| \begin{cases}\vec{\nu}=\vec{\mu}-(f+1)\mathbf{e}_{n-k+1+j}+f\mathbf{e}_{1}\\
                          \mu_1^t=\mu_j^b+j-f-1\end{cases} \hspace{-0.4cm}\mbox{for some }
                     f>0,\, 1\leq j\leq m \hspace{-0.0cm}\right\}.
       \end{align*}
   In addition, we note  $\Gamma_2(\mu)\subset S(\mu)$ and denote $\Gamma_1(\mu): = S(\mu)\setminus \Gamma_2(\mu)$.
\end{defn}

\begin{lemma}
\label{lemmafordeg1ChevallBBB}
Suppose $v', w'\in W^{\tilde P}$ correspond to the shapes $\nu, \mu\in \mathcal{P}_{k-1}$ respectively;
 then $N_{s_k, v'}^{w', 0}\neq 0$ if and only if  $\nu\in S(\mu)$. Furthermore, $N_{s_k, v'}^{w', 0}=1$  if $\nu\in \Gamma_1(\mu)$,  or $2$ if $\nu\in \Gamma_2(\mu)$.
\end{lemma}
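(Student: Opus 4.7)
\medskip

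\noindent\textbf{Proof proposal.} The plan is to evaluate $N_{s_k,v'}^{w',0}$ directly from the classical Chevalley formula, namely the $d=0$ part of Proposition~\ref{quanchevalley}, which gives
\[
\sigma^{s_k}\cup\sigma^{v'}=\sum_{\gamma\in R^+,\;\ell(v's_\gamma)=\ell(v')+1}\langle\chi_k,\gamma^\vee\rangle\,\sigma^{v's_\gamma}.
\]
Hence $N_{s_k,v'}^{w',0}$ is nonzero exactly when there exists a (necessarily unique) positive root $\gamma$ with $w'=v's_\gamma$, $v'(\gamma)\in R^+$, and $\langle\chi_k,\gamma^\vee\rangle\neq 0$, in which case its value is $\langle\chi_k,\gamma^\vee\rangle\in\{1,2\}$.

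Next I would enumerate the positive roots of $B_n$ with nonzero pairing against $\chi_k=\mathbf{e}_1+\cdots+\mathbf{e}_k$ (using $k<n$): they split into four families, namely $\gamma=\mathbf{e}_i-\mathbf{e}_j$ with $i\leq k<j\leq n$ (pairing $1$), $\gamma=\mathbf{e}_i+\mathbf{e}_j$ with $i\leq k<j\leq n$ (pairing $1$), $\gamma=\mathbf{e}_i+\mathbf{e}_j$ with $i<j\leq k$ (pairing $2$), and the short root $\gamma=\mathbf{e}_i$ with $i\leq k$ (pairing $2$). Using the reduced expression $(\star)$ for $w_\nu$ and the barred-permutation description of $W^{\tilde P}$ from section~\ref{subsecQPRCCC}, I would then translate each right multiplication $v'\mapsto v's_\gamma$ into an explicit modification of the canonical vector $\vec{\nu}$: for $\mathbf{e}_i\pm\mathbf{e}_j$ this is a signed transposition of two entries, and for $\mathbf{e}_i$ a single sign change. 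A case-by-case check should show that the two coefficient-$1$ families correspond to either decreasing one entry indexed by $j\geq 2$ (the first clause of $S(\mu)$) or performing the swap in the second clause of $S(\mu)$ with $i\geq 2$, while the two coefficient-$2$ families correspond exactly to the two descriptions defining $\Gamma_2(\mu)$ (decreasing a purely bottom entry subject to $\mu_i^t\neq\mu_j^b+j-1$, or carrying out the swap with $i=1$).

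The main obstacle is the combinatorial bookkeeping in this last step: relating the length condition $\ell(v's_\gamma)=\ell(v')+1$ and the membership $v's_\gamma\in W^{\tilde P}$ simultaneously to the algebraic identity $\mu_i^t=\mu_j^b+j-f-1$ appearing in the definition of $S(\mu)$ and $\Gamma_2(\mu)$, and checking that no two distinct roots $\gamma$ can produce the same $w'$. Once this dictionary between signed-permutation moves on $\vec{\nu}$ and shape modifications $\nu\rightsquigarrow\mu$ is established, the conclusion is immediate: $\nu\in S(\mu)$ exactly captures the nonvanishing cases, with $\nu\in\Gamma_1(\mu)=S(\mu)\setminus\Gamma_2(\mu)$ contributing the Chevalley coefficient $1$ and $\nu\in\Gamma_2(\mu)$ contributing $2$.
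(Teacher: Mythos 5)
Your opening move is the same as the paper's: invoke the classical part of the quantum Chevalley formula to reduce the computation to finding a (unique) positive root $\gamma$ with $w'=v's_\gamma$ and $\ell(v's_\gamma)=\ell(v')+1$, the coefficient then being $\langle\chi_k,\gamma^\vee\rangle$. After that the two arguments diverge in the combinatorial machinery used to identify $\gamma$ and its pairing. The paper deletes a single simple reflection from the specific reduced word $(\star)$ for $w_\mu$, uses Lemma~\ref{lemmaprodAtype} to reassemble the result into the canonical form $(\star)$ for some $\nu$, observes en route that this forces $\nu\in S(\mu)\cup\{((\mu_1^t-1,\mu_2^t,\ldots)//\mu^b)\}$, and then computes $(\gamma')^\vee$ explicitly by conjugating a simple coroot through the tail of the reduced word via a two-stage induction, reading off the Chevalley pairing at the end (and showing the extra shape contributes $0$). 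You instead enumerate the four families of positive roots of $B_n$ with nonzero pairing against $\chi_k$ in $\mathbf{e}_i\pm\mathbf{e}_j$ coordinates and propose to match each right multiplication $v'\mapsto v's_\gamma$ to a signed-permutation move on $\vec{\nu}$. This is a legitimate alternative and, if carried through, avoids the inductive $\gamma_i^\vee$ computations: the pairing value is read off immediately from the root type rather than recomputed. What the paper's route buys is automatic control of the length condition (deleting one letter from a reduced word automatically increases length by one), whereas your route has to independently verify the Bruhat cover condition in the barred-permutation picture and simultaneously verify that $v's_\gamma$ lands in $W^{\tilde P}$ with the correct shape. Your proposal is an outline rather than a proof: the dictionary between the four root families and the clauses defining $S(\mu)$ and $\Gamma_2(\mu)$ (in particular the role of the constraint $\mu_i^t=\mu_j^b+j-f-1$ and the exclusion $\mu_i^t\neq\mu_j^b+j-1$ for $i\geq 2$) is precisely the content of the lemma, and you defer it. Also note a small imprecision: you list $v'(\gamma)\in R^+$ next to $\ell(v's_\gamma)=\ell(v')+1$, but the latter (cover) condition is strictly stronger and is the one Chevalley requires.
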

\begin{proof}
  By the classical Chevalley formula (i.e. the classical part of Proposition \ref{quanchevalley}), we have
     $N_{s_k, v'}^{w', 0}\neq 0$  only if $w'=v's_{\gamma'}$ for some positive root $\gamma'$ satisfying $\ell(v's_{\gamma'})=\ell(v')+1$, and when this holds, we have $N_{s_k, v'}^{w', 0}= \langle\chi_k, (\gamma')^\vee\rangle$.
Thus $v'$ is obtained by deleting a unique simple reflection from the reduced expression of $w'$ that is given by $(\star)$ (with respect to $k-1$). Furthermore, such an induced expression of $v'$ is reduced.
Using Lemma \ref{lemmaprodAtype} (together with the definition  of $\mu$ as a shape in $\mathcal{P}_{k-1}$), we can derive another reduced expression of $v'$ in the form of $(\star)$, from the induced reduced expression of $v'$.
In particular, we conclude  that $\nu\in S(\mu)\cup\{((\mu_1^t-1, \mu_2^t, \cdots, \mu_{n-k+1}^t)//\mu^b)\}$ (details for which are similar to the proof
 of statement a) of Proposition 3.4 of \cite{leungli33}).

  Note that if $v'$ is obtained by deleting a simple reflection $s_j$ from the $\ell$th position, then we have $\gamma'=x^{-1}(\alpha_j^\vee)$ where
    $x$ is the product of simple reflections from the
   $(\ell+1)$th position to the end of the reduced expression of $w'$.
Assume $\nu\in \Gamma_2(\mu)$.
 If  $\vec{\nu}=\vec{\mu}-\mathbf{e}_{n-k+1+j}$ for some $1\leq j\leq m$,  then   $v'$ is obtained by deleting the simple reflection $s_{n-\mu_j^b+1}$.
 \begin{enumerate}
    \item If  $n-\mu_j^b+1=n$, then  $\mu_j^b=1$. Consequently, we have    $j=m$ and
           $(\gamma')^\vee=(w')^{-1}s_n(\alpha_n^\vee)=\alpha_n^\vee+2\sum_{i=k-1}^{n-1}\alpha_i^\vee$. Thus   $\langle\chi_k,  (\gamma')^\vee \rangle=2.$
    \item  If $n-\mu_j^b+1\neq n$, then it is less than $n$ and   $(\gamma')^\vee= x^{-1}(\alpha_n^\vee)=\gamma_{1}^\vee$, where for each $1\leq i\leq n-k+1$ we denote by $\gamma^\vee_i$ the following coroot
  $$(s_{i+k-2} \cdots s_{n-\mu_i^t+1})\cdots(s_{n-1}\cdots s_{n-\mu_{n-k+1}^t+1})\big(\alpha_n^\vee+2\sum_{h=n-j+1}^{n-1}\alpha_h^\vee+\sum_{h=n-\mu_j^b+2-j}^{n-j}\alpha_h^\vee\big).$$
  Denote $\mu_{n-k+2}^t:=1$ and  $\gamma_{n-k+2}^\vee:=\alpha_n^\vee+2\sum_{h=n-j+1}^{n-1}\alpha_h^\vee+\sum_{h=n-\mu_j^b+2-j}^{n-j}\alpha_h^\vee$.
Note  that $\mu^t$ is a strict partition.
  Since $\nu\in \Gamma_2(\mu)$,     $n-\mu_i^t+1\neq  n-\mu_j^b+2-j$ for all $2\leq i\leq n-k+1$;
consequently,  there exists a unique $2\leq r\leq n-k+2$ such that
        $(n-\mu_i^t+1)-(n-\mu_j^b+2-j)$ is positive if  $i\geq r$, or negative if $2\leq i< r$.
Note that $k+i-j-1\geq k+i-m-1\geq k+i-\mu_{n-k+1}^t=n-(\mu_{n-k+1}^t+n-k+1-i)+1\geq n-\mu^t_i+1> n-\mu_j^b+2-j$,
   for each $ n-k+1\geq i\geq r$.
  By induction on $i$ (descendingly), we conclude $\gamma_i^\vee=
              \alpha_n^\vee+2\sum_{h=k+i-j-1}^{n-1}\alpha_h^\vee+\sum_{h=n-\mu_j^b+2-j}^{k+i-j-2}\alpha_h^\vee$ for all  $n-k+2\geq i\geq r$.
    In particular, we obtain $\gamma_r^\vee$.
  Furthermore,  for each $r\geq i\geq 2$, we have    $n-\mu_j^b+1+(i-r)>n-\mu_{r-1}^t+1+(i-r)=n-(\mu_{r-1}^t+r-i)+1\geq n-{\mu_{i-1}^t}+1$.
   Thus, by induction on $i$ (descendingly), we conclude $\gamma_i^\vee=
              \alpha_n^\vee+2\sum_{h=k+i-j-1}^{n-1}\alpha_h^\vee+\sum_{h=n-\mu_j^b+2-j+i-r}^{k+i-j-2}\alpha_h^\vee$ for  all $r\geq i\geq 2$.
   In particular, we obtain $\gamma_2^\vee$, for which we note $k+2-j-1\leq k$. Thus
    $$  \langle\chi_k, (\gamma')^\vee\rangle = \langle\chi_k, s_{k-1}s_{k-2}\cdots s_{n-\mu_1^t+1}(\gamma_2^\vee)\rangle
              =\langle\chi_k,  \gamma_2^\vee \rangle=2.$$

 \end{enumerate}
 Otherwise, we have $\vec{\nu}=\vec{\mu}-(f+1)\mathbf{e}_{n-k+1+j}+f\mathbf{e}_{1}$ for some $f>0$
            In this case, $v'$ is obtained by deleting $s_{n-\mu^b_j+f+1}$, and
        we have  $(\gamma')^\vee=(\gamma_1')^\vee$  with $(\gamma_{n-k+2}')^\vee=
           \alpha_n^\vee+2\sum_{h=n-j+1}^{n-1}\alpha_h^\vee+\sum_{h=n-\mu_j^b+2-j+f}^{n-j}\alpha_h^\vee$ and
        $(\gamma_i')^\vee=(s_{i+k-2}s_{i+k-3}\cdots s_{n-\mu_i^t+1})((\gamma_{i+1}')^\vee)$ for each $n-k+1\geq i\geq 1$.
     Note that $k+i-j-1\geq n-\mu_i^t+1>n-\mu_1^t+1=n-\mu_j^b+2-j+f$ for each $n-k+1\geq i\geq 2$.
     Using the same arguments as for case $(2)$, we can show
  $  \langle\chi_k, (\gamma')^\vee\rangle = \langle\chi_k, s_{k-1}s_{k-2}\cdots s_{n-\mu_1^t+1}(\gamma_2'^\vee)\rangle
              =\langle\chi_k,  \gamma_2'^\vee \rangle=2.$

  Hence, if $\nu\in \Gamma_2(\mu)$, then we have
   $  N_{s_k, v'}^{w', 0}= \langle\chi_k, (\gamma')^\vee\rangle =2$.
  Similarly, we can show
   $\langle\chi_k, (\gamma')^\vee\rangle =1$ if $\nu\in \Gamma_1(\mu)$, or $0$ if $\nu=((\mu_1^t-1, \mu_2^t, \cdots, \mu_{n-k+1}^t)//\mu^b)$.

 Hence, the statement follows.
 \end{proof}
\begin{thm}
[Quantum Pieri rule for $OG(k, 2n+1)$]\label{thmQPRforOGOG222}  Let $
\mathbf{a}\in\mathcal{P}_{k}$ and $p\leq k$. Using the same notations as in Lemma
\ref{lemmaforPieriidenti}, we have
\[
\sigma^{p}\star\sigma^{\mathbf{a}}=\sum2^{e'(\mathbf{a}, \mathbf{b})}%
\sigma^{\mathbf{b}} + t\sum N_{p, \mathbf{a}}^{\mathbf{c}, 1}%
\sigma^{\mathbf{c}}+t^2\sum2^{e'(\hat{\mathbf{a}}, \hat{\mathbf{d}})}%
\sigma^{\mathbf{d}}.
\]
Here the first summation is over all shapes $\mathbf{b}\in\mathcal{P}_{k}$
with $|\mathbf{b}|=|\mathbf{a}|+p$ such that $\mathbf{b}$ is compatible with
$\mathbf{a}$;   the third summation occurs only if $\hat{\mathbf{a}}%
\in\mathcal{P}_{k}$, and when this holds, the summation is over all shapes $\mathbf{d}%
\in\mathcal{P}_{k}$ with $|\mathbf{d}|=|\mathbf{a}|+p-4n+2k$ such that
$\hat{\mathbf{d}}\in\mathcal{P}_{k}$ is compatible with $\hat{\mathbf{a}}%
$;  the second summation is over all shapes $\mathbf{c}%
\in\mathcal{P}_{k}$ with $|\mathbf{c}|=|\mathbf{a}|+p-2n+k$. Furthermore,  we have
  $$ N_{p, \mathbf{a}}^{\mathbf{c}, 1}=\begin{cases}
        2^{e'(\bar{\mathbf{a}}, \tilde{\mathbf{c}})}, &\mbox{if } a_1^t>a_1^b\geq a_2^t\\
         2^{e'(\tilde{\mathbf{a}}, \bar{\mathbf{c}})}, & \mbox{if } a_1^b\geq a_1^t, c_1^t=n \mbox{ and } c_2^t\leq n-2\\
          M, & \mbox{if } a_1^b\geq a_1^t    \mbox{ and } c_1^t<n\\
          0, & \mbox{otherwise}
  \end{cases}  .$$
with
  $$M=\sum_\mu 2^{e'(\tilde{\mathbf{a}}, \mu)}+\sum_\mu 2^{1+e'(\tilde{\mathbf{a}}, \mu)}-\sum_\nu 2^{e'(\nu, \tilde{\mathbf{c}})}-\sum_\nu 2^{1+e'(\nu, \tilde{\mathbf{c}})}$$
where the first sum is over
   $\{\mu~|~ \mu\in \Gamma_1(\tilde{\mathbf{c}})\}$, the second sum is over
      $\{\mu~|~ \mu\in \Gamma_2(\tilde{\mathbf{c}})\}$,
       the third sum is over
      $\{\nu~|~ \tilde{\mathbf{a}}\in \Gamma_1(\nu)\}$,
    and   the last sum is over
      $\{\nu~|~ \tilde{\mathbf{a}}\in \Gamma_2(\nu)\}$.
\end{thm}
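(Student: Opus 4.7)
My plan is to derive Theorem~\ref{thmQPRforOGOG222} by combining the main reduction result Theorem~\ref{thmQPRforOGOG} (which already reduces all Gromov--Witten invariants appearing in $\sigma^p\star\sigma^{\mathbf{a}}$ to classical intersection numbers on type $B_n$ flag varieties) with the classical Pieri rule for $OG(k,2n+1)$ (Proposition~\ref{propCPRBBB}) and the shape dictionary of Lemma~\ref{lemmaforPieriidenti}. The $d=0$ coefficient is immediate: the cup product $\sigma^{p}\cup\sigma^{\mathbf{a}}$ is computed by Proposition~\ref{propCPRBBB} and gives the first summation. The $d=2$ coefficient is also routine: by Theorem~\ref{thmQPRforOGOG}(2), $N_{u,v}^{w,2}=N_{u,v_2}^{w_2,0}$, and if this is nonzero Lemma~\ref{lemmaforPieriidenti}(2c) and (4) identify $v_2$ and $w_2$ with the shapes $\hat{\mathbf{a}}, \hat{\mathbf{d}}\in\mathcal{P}_k$; Proposition~\ref{propCPRBBB} then gives the coefficient $2^{e'(\hat{\mathbf{a}},\hat{\mathbf{d}})}$.

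The heart of the proof is the $d=1$ coefficient $N_{u,v}^{w,1}=N_{u_1,v_1}^{w_1,0}$, where $u_1=us_k$, $v_1=vs_\gamma s_k$, $w_1=ws_1\cdots s_{k-1}$. By Lemma~\ref{lemmaforPieriidenti}(1) the element $u_1$ corresponds to the special shape $p-1\in\mathcal{P}_{k-1}$, while the shape status of $v_1$ and $w_1$ in $\mathcal{P}_{k-1}$ is governed by Lemma~\ref{lemmaforPieriidenti}(2b) and (3a),(3b). I would carry out the case split exactly as in the statement: in Case~A ($a_1^t>a_1^b\geq a_2^t$), both $v_1$ and $w_1$ lie in $W^{\tilde{P}}$ with shapes $\bar{\mathbf{a}}$ and $\tilde{\mathbf{c}}$, so $N_{u_1,v_1}^{w_1,0}$ is the classical intersection on $OG(k-1,2n+1)$ computed by Proposition~\ref{propCPRBBB}, yielding $2^{e'(\bar{\mathbf{a}},\tilde{\mathbf{c}})}$. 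In Case~B, the element $w_1$ leaves $W^{\tilde{P}}$ (because $c_1^t=n$) but $w_1s_k\in W^{\tilde P}$ has shape $\bar{\mathbf{c}}$ while $vs_\gamma\in W^{\tilde P}$ has shape $\tilde{\mathbf{a}}$; one applies Corollary~\ref{coridentity} to $(u_1,v_1,w_1,0,\alpha_k)$ to slide the $s_k$ factor across, reducing $N_{u_1,v_1}^{w_1,0}$ to $N_{u_1,vs_\gamma}^{w_1s_k,0}=2^{e'(\tilde{\mathbf{a}},\bar{\mathbf{c}})}$. Outside these cases the sign map and dimension constraints force vanishing, accounting for the ``otherwise'' branch.

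The main obstacle, and the step I would spend most effort on, is Case~C ($a_1^b\geq a_1^t$ and $c_1^t<n$): here $vs_\gamma\in W^{\tilde P}$ has shape $\tilde{\mathbf{a}}$ and $w_1\in W^{\tilde P}$ has shape $\tilde{\mathbf{c}}$, but $v_1=vs_\gamma\cdot s_k$ is \emph{not} in $W^{\tilde P}$, so no single application of Pieri on $G/\tilde P$ suffices. My strategy is to apply the classical Chevalley formula (the classical part of Proposition~\ref{quanchevalley}) to $\sigma^{vs_\gamma}\cup\sigma^{s_k}$ in $H^*(G/B)$ to obtain
\[
\sigma^{v_1}=\sigma^{vs_\gamma}\cup\sigma^{s_k}-\sum_{\nu\,:\,\tilde{\mathbf{a}}\in\Gamma_1(\nu)}\sigma^{w_\nu}-\sum_{\nu\,:\,\tilde{\mathbf{a}}\in\Gamma_2(\nu)}2\,\sigma^{w_\nu},
\]
where the identification of the Chevalley coefficients with the sets $\Gamma_1(\nu),\Gamma_2(\nu)$ is exactly the content of Lemma~\ref{lemmafordeg1ChevallBBB} (applied with the roles of $\nu$ and $\mu$ played by $\tilde{\mathbf{a}}$ and the shape of $w_\nu$). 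Cupping with $\sigma^{u_1}$, the first piece becomes $(\sigma^{u_1}\cup\sigma^{vs_\gamma})\cup\sigma^{s_k}$, which by the Pieri rule Proposition~\ref{propCPRBBB} in $OG(k-1,2n+1)$ and then Lemma~\ref{lemmafordeg1ChevallBBB} again contributes $\sum_{\mu\in\Gamma_1(\tilde{\mathbf{c}})}2^{e'(\tilde{\mathbf{a}},\mu)}+\sum_{\mu\in\Gamma_2(\tilde{\mathbf{c}})}2^{1+e'(\tilde{\mathbf{a}},\mu)}$ to the coefficient of $\sigma^{w_1}$, while each subtracted term contributes $2^{e'(\nu,\tilde{\mathbf{c}})}$ or $2^{1+e'(\nu,\tilde{\mathbf{c}})}$ again by Proposition~\ref{propCPRBBB}. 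Summing gives exactly the formula for $M$ in the theorem.

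The main bookkeeping hazards I foresee are keeping the direction of the $\Gamma_i$ relations straight (whether $\tilde{\mathbf{a}}\in\Gamma_i(\nu)$ or $\nu\in\Gamma_i(\tilde{\mathbf{a}})$—this is dictated by which factor is expanded by Chevalley), and verifying the side conditions in Lemma~\ref{lemmaforPieriidenti} under which the shape identifications are valid; both require careful but routine use of the length and sgn conditions together with the Peterson--Woodward data already extracted in the proof of Theorem~\ref{thmQPRforOGOG}.
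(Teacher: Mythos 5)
Your proposal is correct and follows essentially the same line as the paper's proof: you identify $N_{p,\mathbf{a}}^{\mathbf{c},1}$ with $N_{u_1,v_1}^{w_1,0}$ from Theorem~\ref{thmQPRforOGOG}, make the same case split on $a_1^t>a_1^b$ versus $a_1^b\geq a_1^t$ (and on $w_1\in W^{\tilde P}$ or not), use Lemma~\ref{lemmaforPieriidenti} for the shape dictionary, and in the non-trivial Case~C you expand $\sigma^{v_1}$ via the classical Chevalley formula combined with Lemma~\ref{lemmafordeg1ChevallBBB} and Proposition~\ref{propCPRBBB}, exactly as the paper does. The one thing you leave implicit that the paper makes explicit via Corollary~\ref{coridentity} is that in the Chevalley expansion of $\sigma^{s_k}\cup\sigma^{vs_\gamma}$ the unique term outside $W^{\tilde P}$ is $\sigma^{v_1}$ with coefficient $1$, which is what lets you then apply Lemma~\ref{lemmafordeg1ChevallBBB} (stated only for shapes in $\mathcal{P}_{k-1}$) to all the remaining terms; this is a routine check but it should be recorded.
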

\begin{proof}
  The first summation is provided from the classical Pieri rule.   Note  $\deg t=2n-k$ for $OG(k, 2n+1)$.
   The third summation follows directly from
   Theorem \ref{thmQPRforOGOG}, Lemma \ref{lemmaforthmQPROG}, (statement (2c), (4) of) Lemma \ref{lemmaforPieriidenti} and Proposition \ref{propCPRBBB}.

   Because of the dimension constraint,   $N_{p, \mathbf{a}}^{\mathbf{c}, 1}=N_{u, v}^{w, 1}=N_{us_k, v_1}^{w_1, 0}$ is nonzero only if
    $\ell(v_1)=\ell(v)-2n+2k$ (see Lemma \ref{lemmaforPieriidenti} for the notations).
   When this holds, we have $a_1^b\geq a_2^t$. Note $a_1^t>a_2^t$.

   Assume  $a_1^t>a_1^b$; then $v_1\in W^{\tilde P}$, so that $N_{us_k, v_1}^{w_1, 0}\neq 0$ only if $w_1\in W^{\tilde P}$. Thus in this case, we have
     $N_{p, \mathbf{a}}^{\mathbf{c}, 1} = 2^{e'(\bar{\mathbf{a}}, \tilde{\mathbf{c}})}$ by using statement (2b) and (3a) of Lemma \ref{lemmaforPieriidenti}
        together with the classical Pieri rule for $OG(k-1, 2n+1)$.

   Assume  $a_1^b\geq a_1^t$; then $v_1=vs_\gamma s_k$ with $vs_\gamma\in W^{\tilde P}$, so that $\mbox{sgn}_k(v_1)=1$.
  \begin{enumerate}
     \item Suppose $w_1\notin W^{\tilde P}$; then by Corollary \ref{coridentity}, we  conclude that
             $N_{us_k, v_1}^{w_1, 0}\neq 0$ only if $\mbox{sgn}_k(w_1)=1$, and
             $N_{us_k, v_1}^{w_1, 0}=N_{us_k, vs_\gamma}^{w_1s_k, 0}$ when this holds. In particular, we have
      $w_1s_k\in W^{\tilde P}$. Thus if
        $c_1^t=n$   and   $c_2^t\leq n-2$,
   then we have
     $N_{p, \mathbf{a}}^{\mathbf{c}, 1} =
          2^{e'(\tilde{\mathbf{a}}, \bar{\mathbf{c}})}$, by using  Lemma \ref{lemmaforPieriidenti} and Proposition \ref{propCPRBBB}.
     \item Suppose $w_1\in W^{\tilde P}$; that is, $c_1^t<n$. Using Corollary \ref{coridentity} again, we conclude
                         $\sigma^{s_k}\cup \sigma^{vs_\gamma}=\sigma^{vs_\gamma s_k}+\sum_{v'\in W^{\tilde P}} N_{s_k, vs_\gamma}^{v', 0}\sigma^{v'}$.
         Hence, $N_{us_k, v_1}^{w_1, 0}=              N_{us_k, vs_\gamma s_k}^{w_1, 0}=\sum_{w'\in W^{\tilde P}}N_{s_k, w'}^{w_1, 0}N_{us_k, vs_\gamma}^{w', 0}
             -\sum_{v'\in W^{\tilde P}}N_{s_k, vs_\gamma}^{v', 0}N_{us_k, v'}^{w_1, 0}$.
          Thus in this case, the statement follows from Lemma \ref{lemmafordeg1ChevallBBB}, Lemma \ref{lemmaforPieriidenti} and Proposition \ref{propCPRBBB}.
  \end{enumerate}
\end{proof}

\begin{remark}
  One might want to use the method of Buch-Kresch-Tamvakis on page 372 of \cite{BKT2} to compute $N_{p, \mathbf{a}}^{\mathbf{c}, 1}$, but
    replace $OF(m, m+1; 2n+1)$ (resp. $OG(m+1, 2n+1)$) by $OF(m-1, m; 2n+1)$ (resp. $OG(m-1, 2n+1)$).
  However, such a method does not work here, since
     the identity ${\varphi_2}_*\pi_2^*{\pi_2}_* \varphi_2^*\varphi_1^*\tau_\mu=  \pi^*\pi_*  \varphi_1^*\tau_\mu$ 
     used in their proof   does not hold  any more in our case.
\end{remark}
\subsection{Remarks}
\label{subsecremarks}
As in the previous two subsections, we have derived the quantum Pieri rules for Grassmannians of type $C_n$ and $B_n$ for $k<n$.
The remaining cases for these two types are about the Lagrangian Grassmannian $IG(n, 2n)$ and the odd orthogonal Grassmannian $OG(n, 2n+1)$.
Theorem \ref{thmQPRforIGIG} also works for $IG(n, 2n)$, therefore we can also reformulate it in the way of  Theorem \ref{thmQPRforIGIG222}, by writing down
   the  explicit  identifications as needed and using the classical Pieri rule of Hiller and Boe \cite{HillerBoe}.
For $OG(n, 2n+1)$,  we can also reformulate Theorem \ref{thmQPRforOGOG} in a nice way like Theorem \ref{thmQPRforIGIG222}, by using
generalized classical Pieri rules (Theorem D) given by Bergeron and Sottile
\cite{bergSottile}.  The former case has been done by Kresch and Tamvakis in \cite{KreschTamvakisLagran}, and the latter case has also been done by them
 in an equivalent way in  \cite{KreschTamvakisortho}. Hence, we skip the details here.

When $\Delta$ is of type $D_{n+1}$, the minimal length representatives $W^P$ for the isotropic Grassmannian $OG(k, 2n+2)$ can also be identified with
  ``shapes", which consist in  two types. There are parallel propositions to Lemma \ref{lemmaforPieriidenti}, Lemma \ref{lemmafordeg1ChevallBBB} and
   Theorem \ref{thmQPRforOGOG222} with similar arguments. Since the notations are more involved and the theorem as expected may also involve signs in some cases, we
   skip the details.

 Assuming the classical Pieri rules with respect to the Chern classes $c_p(\mathcal{Q})$ of the tautological quotient bundle $\mathcal{Q}$ over the isotropic Grassmannians, we can also reprove the quantum Pieri rule of Buch-Kresch-Tamvakis for type $B_n$ and $D_{n+1}$.
   For instance for $G/P=OG(k, 2n)$ where $k<n$,  the Chern classes of the tautological quotient bundle $\mathcal{Q}$
are given by $\sigma^{u}$ (up to a scale factor of $2$) for $u$ of the following form (see section 6 of
\cite{BKT3})
\[
s_{k}, s_{k+1}s_{k}, \cdots, s_{n}s_{n-1}\cdots s_{k}, s_{n-1}s_{n}%
s_{n-1}\cdots s_{k}, \cdots, s_{1}\cdots s_{n-1}s_{n}s_{n-1}\cdots s_{k}.
\]
Taking any one of the above $u$ and any $v\in W^{P}$, we can show the following quantum Pieri rule given by Buch, Kresch and Tamvakis.

\begin{prop}[See Theorem 2.4 of \cite{BKT2}]
   \[
\sigma^{u}\star\sigma^{v}=\sigma^{u}\cup\sigma^{v} +  \sum_{w\in W^{P}\atop \ell(u)+\ell(v)=\ell(w)+2n-k}
 N_{us_k, v_{1}}^{w_{1}, 0} \sigma^{w}t+  \sum_{w\in W^{P}\atop \ell(u)+\ell(v)=\ell(w)+4n-2k}
N_{u, v_2}^{w_2, 0} \sigma^{w}t^2,
\]
in which we have $v_{1}=vs_ks_{k-1}\cdots s_1$, $w_1=ws_{k+1}\cdots s_{n-1}s_ns_{n-1}\cdots s_{k+1}$,
  $ v_{2}%
=vs_{k}\cdots s_{n-1}s_{\bar\alpha_{n}} s_{n-1}\cdots s_1$
and  $w_{2}=ws_{1}\cdots s_{n-1} s_{\bar\alpha_{n}}%
s_{n-1}\cdots s_{k}$.
 \end{prop}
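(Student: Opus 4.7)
The plan is to follow the same strategy as in Theorem \ref{thmQPRforOGOG}, applying the Peterson--Woodward comparison formula to lift each Gromov--Witten invariant $N_{u,v}^{w,d}$ to an invariant on $G/B$ and then using Proposition \ref{propmainthminLeungLi} and Corollary \ref{corovanish} to reduce it to a classical intersection number. The crucial observation is that the Peterson--Woodward data $(\lambda_B,\omega_P\omega_{P'})$ is determined by $\lambda_P=d\alpha_k^\vee+Q_P^\vee$ alone and is independent of $u$, so the explicit formulas for $\lambda_B$ in Lemma \ref{lambdaBfortypeBBDD} and for $\omega_P\omega_{P'}$ worked out in the proof of Theorem \ref{thmQPRforOGOG} carry over verbatim. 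Moreover, the vanishing for $d\geq 3$ produced there (Proposition \ref{propdlarger2BBDD}) depends only on structural sign properties of $u$ and $v$ --- for instance, $\mbox{sgn}_{k+1}(u)$ or $v(\alpha_j)\in R^+$ for small $j$ --- and the new $u$'s, being products of simple reflections with indices $\geq k$, satisfy exactly the same constraints; so the vanishing $N_{u,v}^{w,d}=0$ for $d\geq 3$ transfers directly.

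For $d=1$ one has $\lambda_B=\alpha_k^\vee$ and $\omega_P\omega_{P'}=s_1\cdots s_{k-1}\cdot s_{k+1}\cdots s_{n-1}s_{\bar\alpha_n}s_{n-1}\cdots s_{k+1}$. Since each $u$ in the list ends in $s_k$ on the right, we have $\mbox{sgn}_k(u)=1$, and Proposition \ref{propmainthminLeungLi}(2) applied with $\alpha=\alpha_k$ gives
\[
N_{u,v}^{w,1}=N_{u,v}^{w\omega_P\omega_{P'},\alpha_k^\vee}=N_{us_k,vs_k}^{w\omega_P\omega_{P'},0}.
\]
I would then peel the simple reflections of $\omega_P\omega_{P'}$ off one at a time via Corollary \ref{coridentity}, transporting the block $s_1\cdots s_{k-1}$ onto $vs_k$ (so that $vs_k\mapsto v_1=vs_ks_{k-1}\cdots s_1$) and the block $s_{k+1}\cdots s_{n-1}s_{\bar\alpha_n}s_{n-1}\cdots s_{k+1}$ onto $w$ to form $w_1$. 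The sign hypotheses for Corollary \ref{coridentity} at each step follow from $v\in W^P$ (which gives $\mbox{sgn}_j(v)=0$ for $j\neq k$) and from a direct computation of $\mbox{sgn}_j(us_k)$ for $j\geq k+1$ using the explicit form of $u$.

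For $d=2$ one has $\lambda_B=\alpha_{k-1}^\vee+2\sum_{j=k}^{n-1}\alpha_j^\vee+\bar\alpha_n^\vee$ and $\omega_P\omega_{P'}=s_2\cdots s_{k-1}s_1\cdots s_{k-2}$; the argument parallels the $d=2$ case of Theorem \ref{thmQPRforOGOG} (in particular the chain of reductions culminating in Lemma \ref{lemmaforthmQPROG}) and yields $N_{u,v}^{w,2}=N_{u,v_2}^{w_2,0}$. The main obstacle will be verifying all intermediate $\mbox{sgn}_\alpha$ hypotheses, because the new $u$'s containing $s_n$ (or $s_{\bar\alpha_n}$) interact less uniformly with the reflections being moved than the chain $u=s_{k-p+1}\cdots s_k$ of the original proof; however, since each $u$ in the list is a single explicit word in the simple reflections, these signs can be verified case by case, or more efficiently by induction along the list (each $u$ is obtained from the previous one by prepending a single $s_j$, so the signs update predictably). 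Once this is done, combining the reduction $N_{u,v}^{w,d}=N_{\,\cdot\,,\,\cdot\,}^{\,\cdot\,,0}$ with the assumed classical Pieri rules with respect to $c_p(\mathcal{Q})$ applied to each of the resulting classical intersection numbers gives back the explicit combinatorial formula of Buch--Kresch--Tamvakis.
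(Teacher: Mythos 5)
Your proposal follows the same overall strategy as the paper's (very brief) sketch: lift via the Peterson--Woodward comparison formula, use Proposition \ref{propmainthminLeungLi}, Corollary \ref{corovanish} and Corollary \ref{coridentity} to peel off simple reflections, and quote the assumed classical Pieri rules for $c_p(\mathcal{Q})$. That much is on target, and the $d=1$ reduction you describe (moving $s_1\cdots s_{k-1}$ onto $vs_k$ and leaving the block $s_{k+1}\cdots s_{\bar\alpha_n}\cdots s_{k+1}$ on the $w$ side) is exactly how one obtains $N_{us_k,v_1}^{w_1,0}$.

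There are, however, two concrete misstatements. First, you assert that the new $u$'s are "products of simple reflections with indices $\geq k$" and hence that the vanishing for $d\geq 3$ "transfers directly" from Proposition \ref{propdlarger2BBDD}. Both claims are false for the longer words in the list: $s_1\cdots s_{n-1}s_n s_{n-1}\cdots s_k$, for instance, involves $s_1,\dots,s_{k-1}$. More importantly, the proof of Proposition \ref{propdlarger2BBDD} exploits the very specific chain shape of the original $u=s_{k-p+1}\cdots s_k$: it forms $u' := us_k s_{k-1}\cdots s_{k-\bar d+1}$ and argues that $u'$ is either the identity or another consecutive chain with top index $<k$, so that Lemma \ref{lemforBBBCCCDD} applies. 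For the present $u$'s, $us_k$ is a different kind of word (a descending run or a hook through $s_n$), so $u'$ is neither trivial nor of the form required by Lemma \ref{lemforBBBCCCDD}, and the argument does not carry over verbatim. The paper only claims that "similar arguments" work, and some genuine re-derivation of the sign/vanishing chain is needed here. Second, your prediction that the $d=2$ reduction is \emph{harder} for these $u$'s runs against the paper's explicit remark that the arguments "become much simpler than the proof of Theorem \ref{thmQPRforOGOG}." The simplification comes precisely from the fact that for these $u$'s, $us_k$ sheds the generator $s_k$ entirely, so the sign hypotheses needed in the reductions that previously required the long chain through $s_{k+1},\dots,s_{\bar\alpha_n}$ now hold for more direct reasons. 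You should replace the "verify case by case" heuristic with the observation that the hook shape of $u$ makes $\mathrm{sgn}_j(us_k)$ (for $j$ near $k$) trivially computable, and trace the intermediate reductions accordingly.
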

 We  sketch    an alternative proof of the above proposition as follows.
  \begin{enumerate}
    \item Using similar arguments for Proposition \ref{propdlarger2BBDD}, we can show that $N_{u, v}^{w, d}=0$ if $d\geq 3$.
    \item Using Proposition \ref{propmainthminLeungLi}, we can show $N_{u, v}^{w, 1}=N_{us_k, v_1}^{w_1, 0}$ and
      $N_{u, v}^{w, 2}=N_{u, v_2}^{w_2, 0}$. (The arguments here become much simpler than the proof of Theorem \ref{thmQPRforOGOG}.)
  \end{enumerate}
Furthermore for each $i\in \{1, 2\}$, it follows directly from the dimension constraint that
   $N_{u, v_i}^{w_i, 0}\neq 0 \mbox{ only if } \ell(v_i)=\ell(v)-\ell(v^{-1}v_i)$. When this holds, we have
 \begin{enumerate}
   \item[(3)] $v_1\in W^{\tilde P}$ where $G/\tilde P=OG(k+1, 2n+1)$. Furthermore, let $\lambda$ denote the $(n-k)$-strict partition corresponding to $v$; then
           $v_1$  exactly corresponds the $(n-k-1)$-strict partition $\bar \lambda$ defined on page 372 of \cite{BKT2}.
   \item[(4)]  Statement (2c) and (4) of Lemma \ref{lemmaforPieriidenti}, which give  identifications between $W^P$ and shapes (and therefore other parameterizations), can be applied directly for $v_2$ and $w_2$ respectively.
  \end{enumerate}

 For $IG(k, 2n)$ (i.e. Grassmannians of type $C_n$), we can also show that there are most degree 0 and  1 Gromov-Witten invariants  occurring  in the quantum Pieri rule; furthermore,  we can reduce the degree 1 Gromov-Witten invariants to certain classical intersection numbers,  for $1\leq p\leq  n-k+1$.

We  can also use  Proposition \ref{propmainthminLeungLi} to compute $N_{u, v}^{w, d}$ for certain $u, v, w, d$, in which none of
   $u, v\in W^P$ is special.

\begin{example}
 For $X=IG(3, 10)$, we take $w=s_2$, $u=s_2s_1s_4s_3s_2s_5s_4s_3$ and $v=s_1s_5s_4s_3s_2s_4s_5s_4s_3$. By using
  Proposition \ref{propmainthminLeungLi} directly, we can show
   $N_{u, v}^{w, 2}=N_{u, v'}^{w', 0}$ where
   $v'=s_1s_3$ and $w'=s_3s_2s_1s_5s_4s_3s_2s_5s_4s_3$.
  We can easily  compute the
   classical intersection number $N_{u, v'}^{w', 0}$, for instance by using the   Chevalley formula with the observation that
      $\sigma^{s_1s_3}=\sigma^{s_1}\cup \sigma^{s_3}$.
    As a consequence, we have      $N_{u, v}^{w, 2}=N_{u, v'}^{w', 0}=1$.
   (Note that in terms of the notations in  Example 1.5 of \cite{BKT2},
           we have $\sigma^u=\sigma_{4, 2,, 2}, \sigma^v=\sigma_{5, 3, 1}$ and $\sigma^{w}=\sigma_1$.)
\end{example}

\section{Appendix} 

In this appendix, we   reprove the well-known quantum Pieri rules for Grassmannians of type $A$ (i.e. complex Grassmannians) with the same method used in
the present paper.

Recall that   $Gr(k, n+1)=SL(n+1, \mathbb{C})/P$ with   $P$ being
 the (standard) maximal parabolic subgroup  corresponding     $\Delta_{P}=\Delta\setminus\{\alpha_{k}\}$.
The Weyl group $W$ is canonically isomorphic to  the permutation group $S_{n+1}$, by mapping $s_j$ to the transposition $(j, j+1)$ for each $1\leq j\leq n$.
Customarily, Schubert classes in $H^*(Gr(k, n+1))$ are labelled by the set $$\mathcal{P}_{k}
  =\{\mathbf{a}=(a_1, \cdots, a_k)~|~ n+1-k\geq a_1\geq a_2\geq \cdots \geq a_k\geq 0\}$$ of partitions inside the
   $k$ by $(n+1-k)$ rectangle. Precisely, for each $u\in W^P$, we can rewrite the Schubert cohomology class
    $\sigma^u$ as $\sigma^{\mathbf{a}(u)}$ with
       $$  \mathbf{a}(u)=(u(k)-k, u(k-1)-(k-1), \cdots, u(2)-2, u(1)-1)$$
  via the canonical identification between $W^P$ and $\mathcal{P}_k$ (see e.g. \cite{fw}).
 For the   tautological bundles, $0\rightarrow \mathcal{S}\rightarrow \mathbb{C}^{n+1}\rightarrow \mathcal{Q}\rightarrow 0$,  the $i$-th Chern class  $c_i(\mathcal{Q})$ (resp. $c_i(\mathcal{S}^*)$)
  coincides with
  the Schubert cohomology  class $\sigma^{\mathbf{a}(u)}=\sigma^u$  where
   $u=s_{k+i-1}s_{k+i-2} \cdots s_{k+1}s_{k}$ and consequently $\mathbf{a}(u)=(i, 0, \cdots,0 )=:i$ for each $1\leq i\leq n+1-k$
   (resp.  $u=  s_{k-i+1}s_{k-i+2}\cdots s_{k-1}s_k$
     and consequently $\mathbf{a}(u)=(1^i)$ for each $1\leq i\leq k$) (see e.g. \cite{buch}).
The following   quantum Pieri rule with respect to $c_p(\mathcal{Q})$ was  first proved by Bertram \cite{bert}.


\begin{prop}[Quantum Pieri rule]\label{propquanpiericpxgrassm}
  For any  $p$,   $\mathbf{a}\in \mathcal{P}_k$, we have
    $$\sigma^p\star \sigma^{\mathbf{a}}=\sum\sigma^{\mathbf{b}}+t\sum\sigma^{\mathbf{c}},$$
where the first is summation over all $\mathbf{b}\in\mathcal{P}_k$ satisfying
    $|\mathbf{b}|=|\mathbf{a}|+p$ and $n+1-k\geq b_1\geq a_1\geq b_2\geq a_2\geq\cdots \geq b_k\geq a_k$, and
 the second summation is over all   $\mathbf{c}\in\mathcal{P}$ satisfying
    $|\mathbf{c}|=|\mathbf{a}|+p-(n+1)$ and $a_1-1\geq c_1\geq a_2-1\geq c_2\geq  \cdots \geq a_k-1\geq c_k$.
\end{prop}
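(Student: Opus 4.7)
The strategy mirrors that of Theorem \ref{thmQPRforIGIG}. Since $H_2(Gr(k, n+1), \mathbb{Z}) \cong \mathbb{Z}$, we write $\sigma^p \star \sigma^{\mathbf{a}} = \sigma^p \cup \sigma^{\mathbf{a}} + \sum_{d \geq 1} t^d \sum_w N_{u,v}^{w,d} \sigma^w$, where $u = s_{k+p-1} s_{k+p-2} \cdots s_k$ and $v \in W^P$ corresponds to $\mathbf{a}$. The classical ($d = 0$) part is just the classical Pieri rule. For the Peterson--Woodward lift $\lambda_B$ of $\lambda_P = d \alpha_k^\vee + Q_P^\vee$, an easy computation analogous to Lemma \ref{lemmalambdaBfortypeCC} shows that $\langle \alpha_k, \lambda_B \rangle \geq 2$ whenever $d \geq 2$, while $\mbox{sgn}_k(u), \mbox{sgn}_k(v) \leq 1$. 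An argument parallel to Proposition \ref{propdlarger2CCC}, propagating vanishing through the chains of simple roots $\alpha_{k+1}, \ldots, \alpha_n$ and $\alpha_{k-1}, \ldots, \alpha_1$ via Corollary \ref{corovanish}, then forces $N_{u,v}^{w, d} = 0$ for $d \geq 2$; indeed, for most $(k, n, p)$ the dimension constraint $\ell(u) + \ell(v) = \ell(w) + d(n+1)$ already rules out $d \geq 2$.

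For the case $d = 1$, one checks that $\lambda_B = \alpha_k^\vee$ satisfies the uniqueness condition $\langle \gamma, \lambda_B\rangle \in \{0, -1\}$ for all $\gamma \in R_P^+$. Hence $\Delta_{P'} = \Delta_P \setminus \{\alpha_{k-1}, \alpha_{k+1}\}$, and Lemma \ref{charalongest} (together with a one-line-notation check) gives
\[
\omega_P\, \omega_{P'} = (s_{k-1}\, s_{k-2} \cdots s_1)\,(s_{k+1}\, s_{k+2} \cdots s_n),
\]
of length $n-1$. I then mimic the two-claim argument in the proof of Theorem \ref{thmQPRforIGIG}: the contribution $N_{u,v}^{w \omega_P \omega_{P'}, \alpha_k^\vee}$ from $\sigma^u \star \sigma^v$ coincides with the corresponding contribution from the three-fold product $\sigma^{us_k} \star \sigma^{s_k} \star \sigma^v$, and combining the quantum Chevalley formula with Lemma \ref{lemforBBBCCCDD} produces the reduction
\[
N_{u,v}^{w, 1} \;=\; N^{w\omega_P\omega_{P'},\, 0}_{us_k,\, vs_k}
\]
provided $\ell(vs_k) = \ell(v) - 1$ (equivalently $a_k \geq 1$), and $N_{u,v}^{w, 1} = 0$ otherwise.

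It then remains to identify this classical intersection number on $G/B$ with the combinatorial expression appearing in the statement. Inductive use of Corollary \ref{coridentity} together with Lemma \ref{lemmaprodAtype} absorbs the Seidel-type factor $\omega_P \omega_{P'}$, reducing $N^{w\omega_P\omega_{P'}, 0}_{us_k, vs_k}$ to a Schubert intersection on $Gr(k, n)$: the element $us_k = s_{k+p-1} \cdots s_{k+1}$ plays the role of the special class $\sigma^p$ on $Gr(k, n)$, while $vs_k$, after absorbing the Seidel tail, corresponds to the shifted partition $(a_1 - 1, a_2 - 1, \ldots, a_k - 1)$ inside the $k \times (n - k)$ rectangle. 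The classical Pieri rule on $Gr(k, n)$ then delivers exactly the second sum in the statement. The main obstacle is the bookkeeping of this identification---in particular, verifying that the length-additivity hypotheses needed by Corollary \ref{coridentity} are preserved along the inductive reduction, analogous to the role played by Lemma \ref{lemmaforPieriidenti} in the type-$C$ case. Once this bookkeeping is in place, the quantum Pieri formula follows by direct substitution into the two classical Pieri rules (on $Gr(k, n+1)$ and $Gr(k, n)$).
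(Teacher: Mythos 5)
Your overall architecture is the same as the paper's: write the product via the Peterson--Woodward lift, kill $d\geq 2$ using Proposition \ref{propmainthminLeungLi} and Corollary \ref{corovanish}, reduce the $d=1$ Gromov--Witten invariant to a classical intersection number $N_{us_k,\,\cdot}^{\cdot,\,0}$ using Proposition \ref{propmainthminLeungLi}(2) and Corollary \ref{coridentity}, and then read off the answer from a classical Pieri rule. (The detour through the triple product $\sigma^{us_k}\star\sigma^{s_k}\star\sigma^v$ and Lemma \ref{lemforBBBCCCDD} is unnecessary here --- the paper applies Proposition \ref{propmainthminLeungLi}(2) directly to get $N_{u,v}^{w\omega_P\omega_{P'},\alpha_k^\vee}=N_{us_k,vs_k}^{w\omega_P\omega_{P'},0}$ --- but that is a matter of economy rather than correctness.)

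There are, however, two concrete errors in the final identification. First, the reduced word you give for the lift of the Seidel factor is the inverse of the right one: $\omega_P\omega_{P'}=(s_1s_2\cdots s_{k-1})(s_ns_{n-1}\cdots s_{k+1})$, not $(s_{k-1}\cdots s_1)(s_{k+1}\cdots s_n)$. (The two commuting blocks must map $\Delta_{P'}$ into $R^+$, which forces the $k$-cycle $s_1\cdots s_{k-1}$ on the first block, not its inverse.) Second --- and this is the substantive gap --- the classical intersection number you end with lives on $Gr(k+1,n+1)$, not $Gr(k,n)$. The element $us_k=s_{k+p-1}\cdots s_{k+1}$ has length $p-1$ (so it cannot ``play the role of $\sigma^p$'' on any Grassmannian), and since it is a word ending in $s_{k+1}$ rather than in $s_k$, it is not a special class for the parabolic $\Delta\setminus\{\alpha_k\}$ at all; it is the special partition $p-1$ in $\mathcal{P}_{k+1}$ for $Gr(k+1,n+1)$. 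Correspondingly, $vs_ks_{k-1}\cdots s_1$ corresponds to the $(k+1)$-part partition $\tilde{\mathbf{a}}=(a_1-1,\dots,a_k-1,0)$ inside the $(k+1)\times(n-k)$ rectangle, and $ws_n\cdots s_{k+1}$ to $\tilde{\mathbf{c}}=(n-k,c_1,\dots,c_k)$; the forced top entry $n-k$ in $\tilde{\mathbf{c}}$ is what lets the $(k+1)$-part classical Pieri rule on $Gr(k+1,n+1)$ collapse to the $k$-part interlacing condition $a_1-1\geq c_1\geq a_2-1\geq\cdots\geq a_k-1\geq c_k$ in the statement. With the $Gr(k,n)$ bookkeeping, the entry count does not even match: you would be left with only $k$ entries to account for $c_1,\dots,c_k$ plus a forced leading $n-k$, which is one too few. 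Also, in passing, $\ell(vs_k)=\ell(v)-1$ is equivalent to $\mathbf{a}\neq 0$ (i.e. $v\neq e$), not to $a_k\geq 1$; it is the positivity constraint $a_i-1\geq c_i\geq 0$, not the length drop, that forces $a_k\geq 1$ when the quantum term is nonzero.
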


\noindent We will reprove the above proposition, rather than a quantum Pieri rule with respect to $c_p(\mathcal{S}^*)$, for the following two reasons.

\begin{enumerate}
  \item The tautological quotient bundle over $Gr(k, n+1)=Gr(k, \mathbb{C}^{n+1})$ is isomorphic to
       the dual of the tautological subbundle over the dual Grassmannian $Gr(n+1-k, (\mathbb{C}^{n+1})^*)\cong Gr(n+1-k, n+1)$.
        Hence, it is sufficient to know quantum Pieri rules with respect to either $c_p(\mathcal{Q})$ or $c_p(\mathcal{S}^*)$ for all complex Grassmannians.
  \item From   our alternative proof, we can see that one   point  of our method relies on the combinatorial property of $u$, 
  rather than the geometric property (of being a Chern class of $\mathcal{Q}$ or $\mathcal{S}^*$) for $\sigma^u$.
        In particular, our method could have more general applications (see e.g. Theorem 1.2 of \cite{leungliQtoC}).
\end{enumerate}

\noindent We will use the classical Pieri rule (as given by the first summation in the above formula) and the next two lemmas, which are easily deduced
 from Proposition \ref{propmainthminLeungLi}.
In addition, we use all the same notations as in section \ref{subsectisotroGrassm} but set
   $u=s_{k+p-1}s_{k+p-2}\cdots s_k$ (where   $1\leq p\leq n+1-k$) and $v\in W^P$ with $\mathbf{a}=\mathbf{a}(v)$.
 We need to compute the Gromov-Witten invariants $N_{u, v}^{w, d}$ for the quantum product $\sigma^u\star \sigma^v$.
\begin{lemma}\label{lemmavanishcpxgrass}
 If $d\geq 2$, then we have    $N_{u, v}^{w, d}=0$ for any $w\in W^P$.
\end{lemma}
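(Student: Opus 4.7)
The plan is to mimic the strategy of Proposition \ref{propdlarger2CCC}, which establishes the analogous vanishing in type $C$. The argument has three steps. First, I would apply the Peterson-Woodward comparison formula (Proposition \ref{comparison}) to reduce to the complete flag variety: $N_{u,v}^{w,d} = N_{u,v}^{\tilde w,\lambda_B}$ where $\tilde w = w\omega_P\omega_{P'}$ and $\lambda_B$ is the canonical lift of $\lambda_P = d\alpha_k^\vee + Q_P^\vee$. Second, I would compute $\lambda_B$ explicitly. A direct calculation in type $A_n$ shows that for $2 \leq d \leq \min(k, n+1-k)$ one has
\[
\lambda_B = d\alpha_k^\vee + \sum_{j=1}^{d-1}(d-j)\bigl(\alpha_{k-j}^\vee + \alpha_{k+j}^\vee\bigr),
\]
giving $\langle \alpha_k,\lambda_B\rangle = 2$, $\langle \alpha_{k\pm j},\lambda_B\rangle = 0$ for $1 \leq j \leq d-1$, and $\langle \alpha_{k\pm d},\lambda_B\rangle = -1$ whenever those roots exist. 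For $d$ outside this range, as well as for the extreme cases $k = 1$ and $k = n$, the dimension constraint $d(n+1) = \ell(u)+\ell(v)-\ell(w) \leq (n+1-k)(k+1)$ directly forces $N_{u,v}^{w,d} = 0$.

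The third step, inside the main range, uses the explicit structure of $u = s_{k+p-1}\cdots s_k$. A direct computation shows $u(\alpha_k) = -(\alpha_k + \alpha_{k+1} + \cdots + \alpha_{k+p-1}) \in -R^+$ while $u(\alpha_{k\pm 1}) \in R^+$, so $\mbox{sgn}_k(u) = 1$ and $\mbox{sgn}_{k\pm 1}(u) = 0$. Combined with $\mbox{sgn}_{k\pm 1}(v) = 0$ from Lemma \ref{lemmasgnV=0} and $\langle \alpha_{k\pm 1},\lambda_B\rangle = 0$, Corollary \ref{corovanish}(3) applied successively at $\alpha_{k+1}$ and $\alpha_{k-1}$ reduces the problem to an invariant of the form $N_{us_{k+1}s_{k-1},v}^{\tilde w s_{k+1}s_{k-1},\lambda_B} = 0$. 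Since $\langle \alpha_k,\lambda_B\rangle = 2$ persists, Corollary \ref{corovanish}(1) applied at $\alpha_k$ then peels off $\alpha_k^\vee$ from $\lambda_B$, producing a new coroot with $\langle \alpha_{k\pm 1},\cdot\rangle = 1$. Iterating outward through $\alpha_{k\pm 2},\alpha_{k\pm 3},\ldots$, alternating clauses (2) and (3) of Corollary \ref{corovanish}, the invariant is eventually reduced to one where Proposition \ref{propmainthminLeungLi}(1) applies directly (for example at $\alpha_{k\pm d}$, where a modified sign has become $1$) and gives zero.

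The main obstacle will be the careful bookkeeping of the sign maps $\mbox{sgn}_{\alpha_i}$ on the successively modified triples $(u_\ell,v_\ell,\tilde w_\ell)$ so as to verify which clause of Corollary \ref{corovanish} applies at each step; this is the type $A$ analogue of the delicate tracking carried out for type $C$ in Proposition \ref{propdlarger2CCC}. A secondary subtlety arises when $p$ is small relative to $d$, so that the outward march through $\alpha_{k+1},\alpha_{k+2},\ldots$ leaves the support of $u$ before the chain of nonzero pairings of $\lambda$ has been exhausted; in that case I would switch the roles of $u$ and $v$ using the symmetry of the Gromov-Witten invariants, and continue the peeling on the opposite branch, using the sign constraints on $v$ that are forced by the equality $\ell(u)+\ell(v) = \ell(w)+d(n+1)$.
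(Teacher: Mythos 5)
Your first two steps align with the paper's, modulo presentation: the Peterson--Woodward reduction is the same, and your explicit formula for $\lambda_B$ in the range $2\le d\le\min(k,n+1-k)$ is correct (it is the $m_1=m_2=0$ specialization of the paper's general formula, after reindexing). Handling $k\in\{1,n\}$ and $d$ outside that range by the dimension constraint is a legitimate, if slightly different, shortcut: the paper instead keeps $m_1,m_2$ general and reads off $\langle\alpha_k,\lambda_B\rangle=m_1+m_2+2>2$ (or $d+m_i+1$ when $k=1$ or $n$), so that Proposition~\ref{propmainthminLeungLi}(1) applies uniformly without a separate dimension count.

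The genuine gap is in Step~3. You propose an iterated ``outward march'' from $\alpha_{k\pm1}$ through $\alpha_{k\pm d}$, alternating clauses of Corollary~\ref{corovanish}, and you explicitly leave the sign bookkeeping and the small-$p$ case unresolved. None of this is needed. Once $m_1=m_2=0$, the pairing $\langle\alpha_k,\lambda_B\rangle=2$ together with $\mbox{sgn}_k(\tilde w)=0$ lets one apply Proposition~\ref{propmainthminLeungLi}(2) exactly once at $\alpha_k$, giving $N_{u,v}^{\tilde w,\lambda_B}=N_{us_k,v}^{\tilde w s_k,\lambda_B-\alpha_k^\vee}$. (If $\mbox{sgn}_k(v)=0$ the invariant already vanishes by part~(1), since $\mbox{sgn}_k(u)=1$; if $\mbox{sgn}_k(\tilde w)=1$ it likewise vanishes.) After this single peeling one checks $\mbox{sgn}_{k-1}(us_k)=0$ (because $us_k=s_{k+p-1}\cdots s_{k+1}$ does not involve $s_{k-1}$), $\mbox{sgn}_{k-1}(v)=0$ by Lemma~\ref{lemmasgnV=0}, while $\langle\alpha_{k-1},\lambda_B-\alpha_k^\vee\rangle=1$; so Proposition~\ref{propmainthminLeungLi}(1) at $\alpha_{k-1}$ gives vanishing immediately, and the march to $\alpha_{k\pm d}$ never starts. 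Your plan is not obviously wrong, but it is substantially more work than the situation requires, and the claim that the iteration terminates at $\alpha_{k\pm d}$ with a positive modified sign is asserted rather than verified --- note that $\langle\alpha_{k\pm d},\lambda_B\rangle=-1$ at the outset, so a nontrivial amount of peeling and tracking would be required to turn that into a contradiction, and you have not exhibited it. The missing idea is simply that after one application at $\alpha_k$ the vanishing is already visible one step away at $\alpha_{k-1}$.
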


\begin{proof}
 Recall that  $N_{u, v}^{w, d}=N_{u, v}^{w, \lambda_P}=N_{u, v}^{w\omega_P\omega_{P'}, \lambda_B}$ where $\lambda_P=d\alpha_k^\vee+Q^\vee_P$.
Furthermore, we denote
  $d=m_1k+r_1=m_2(n-k+1)+r_2$ where $1\leq r_1\leq k$ and $1\leq r_2\leq n-k+1$.
  Then we conclude $\lambda_B=m_1\sum_{j=1}^{k-1}j\alpha_{j}^{\vee}+\sum_{j=1}^{r_1-1}%
j\alpha_{k-r_1+j}^{\vee}+ d\alpha_k^\vee+ m_2\sum_{j=1}^{n-k}j\alpha_{n+1-j}^{\vee}+\sum_{j=1}^{r_2-1}j\alpha_{k+r_2-j}^{\vee}$
    (by noting $\langle \alpha_i, \lambda_B\rangle=-1$ if $i\in \{k-r_1, k+r_2\}$, or $0$ otherwise). When $k=1$ (resp. $n$),
    then $\langle \alpha_k, \lambda_B\rangle=d+m_2+1$ (resp. $d+m_1+1$) is larger than $2$
     and thus  we are   done by Proposition \ref{propmainthminLeungLi} (1).
  When $2\leq k\leq n-1$, we have  $\langle \alpha_k, \lambda_B\rangle=m_1+m_2+2$.
   We still have $N_{u, v}^{w, d}=0$ unless
     $m_1=m_2=0$
              and $\mbox{sgn}_k(w\omega_P\omega_{P'})=0$.
  When both conditions hold,  we have $N_{u, v}^{w\omega_P\omega_{P'}, \lambda_B}=
   N_{us_k, v}^{w\omega_P\omega_{P'} s_k, \lambda_B-\alpha_k^\vee} $ by Proposition \ref{propmainthminLeungLi} (2).
  Furthermore, we have
    $\mbox{sgn}_{k-1}(us_k)=\mbox{sgn}_{k-1}(v)=0<1= \langle \alpha_{k-1}, \lambda_B-\alpha_k^\vee\rangle$. Hence, we have
       $N_{us_k, v}^{w\omega_P\omega_{P'} s_k, \lambda_B-\alpha_k^\vee}=0$ by
     using Proposition \ref{propmainthminLeungLi} (1) again.
\end{proof}

Note that $\ell(u)=p$ and the degree of the quantum variable is   $\deg t=n+1$.

\begin{lemma}\label{lemmadegree1pieri}
 For any  $w\in W^P$ with $\ell(w)=\ell(v)+p-(n+1)$, we have
    $$ N_{u, v}^{w,  1}=N_{us_k, vs_ks_{k-1}\cdots s_2s_1 }^{ws_{n}s_{n-1}\cdots s_{k+1}, 0}.$$
\end{lemma}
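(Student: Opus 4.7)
The plan is to combine the Peterson-Woodward comparison formula with Proposition \ref{propmainthminLeungLi} and Corollary \ref{coridentity} to reduce $N_{u,v}^{w,1}$ to the stated classical intersection number. First I would apply Proposition \ref{comparison}: a short direct computation (analogous to the proof of Lemma \ref{lemmavanishcpxgrass} with $d=1$) gives $\lambda_B=\alpha_k^\vee$, so $\Delta_{P'}=\Delta\setminus\{\alpha_{k-1},\alpha_k,\alpha_{k+1}\}$ and $N_{u,v}^{w,1}=N_{u,v}^{w\omega_P\omega_{P'},\alpha_k^\vee}$. Checking the hypotheses of Lemma \ref{charalongest} (length $n-1$ and positivity on $\Delta_{P'}$), I would identify
\[
\omega_P\omega_{P'}=(s_ns_{n-1}\cdots s_{k+1})(s_1s_2\cdots s_{k-1}),
\]
a reduced expression (with an empty factor if $k=1$ or $k=n$).

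A direct root calculation for $u=s_{k+p-1}\cdots s_k$ gives $\mbox{sgn}_k(u)=1$ and $\mbox{sgn}_j(u)=0$ for $j\neq k$. Since $\langle\alpha_k,\alpha_k^\vee\rangle=2$, Proposition \ref{propmainthminLeungLi} (1) forces the Gromov-Witten invariant to vanish unless $\mbox{sgn}_k(v)=1$ and $\mbox{sgn}_k(w\omega_P\omega_{P'})=0$. Under these conditions, equation \eqref{eq1} of Proposition \ref{propmainthminLeungLi} (2) applied at $\alpha=\alpha_k$ converts the quantum invariant to a classical one:
\[
N_{u,v}^{w\omega_P\omega_{P'},\alpha_k^\vee}=N_{us_k,vs_k}^{w\omega_P\omega_{P'},0}.
\]

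The heart of the proof is to transfer the block $s_1s_2\cdots s_{k-1}$ from the right of $w\omega_P\omega_{P'}$ onto $vs_k$ by applying Corollary \ref{coridentity} inductively with $\alpha=\alpha_{k-1},\alpha_{k-2},\ldots,\alpha_1$. At the $i$-th step one must check that $\mbox{sgn}_{k-i}(us_k)=0$, which is immediate because $us_k=s_{k+p-1}\cdots s_{k+1}$ commutes with $s_{k-i}$, and that $\mbox{sgn}_{k-i}(w\omega_P\omega_{P'}\cdot s_{k-1}\cdots s_{k-i+1})=1$. The latter follows from the telescoping $(s_1\cdots s_{k-1})(s_{k-1}\cdots s_{k-i+1})=s_1\cdots s_{k-i}$ combined with the computation that this element sends $\alpha_{k-i}$ to $-w(\alpha_1+\cdots+\alpha_{k-i})\in -R^+$, using $w(\alpha_j)\in R^+$ for $j<k$ coming from $w\in W^P$. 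After all $k-1$ transfers the identity $(s_1\cdots s_{k-1})(s_{k-1}\cdots s_1)=1$ collapses $w\omega_P\omega_{P'}\cdot s_{k-1}\cdots s_1$ to $ws_ns_{n-1}\cdots s_{k+1}$, yielding the claimed formula.

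The main obstacle is bookkeeping the vanishing cases. At each transfer step Corollary \ref{coridentity} returns $0$ whenever $\mbox{sgn}_{k-i}(vs_k\cdot s_{k-1}\cdots s_{k-i+1})=0$, and the quantum-to-classical step itself returns $0$ when $\mbox{sgn}_k(v)=0$ or $\mbox{sgn}_k(w\omega_P\omega_{P'})=1$. In each such scenario one must verify the right-hand classical intersection number also vanishes, and the natural tool is a dimension count on $G/B$ using $\ell(w)=\ell(v)+p-(n+1)$: matching $\ell(us_k)+\ell(vs_ks_{k-1}\cdots s_1)$ with $\ell(ws_ns_{n-1}\cdots s_{k+1})$ forces every one of the $k$ multiplications on the $v$-side to be length-decreasing and every one of the $n-k$ multiplications on the $w$-side to be length-increasing, which is exactly the chain of sign conditions above. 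A short length estimate shows that the borderline configuration $\mbox{sgn}_k(v)=\mbox{sgn}_k(w\omega_P\omega_{P'})=1$ forces $w$ to be the top element of $W^P$ (corresponding to the partition $(n+1-k)^k$), which is incompatible with the dimension constraint, so this case does not occur.
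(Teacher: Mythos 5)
Your proposal is correct and follows essentially the same route as the paper's proof: Peterson--Woodward reduces to $\lambda_B=\alpha_k^\vee$, Lemma \ref{charalongest} identifies $\omega_P\omega_{P'}=(s_n\cdots s_{k+1})(s_1\cdots s_{k-1})$, Proposition \ref{propmainthminLeungLi}(2) strips off $\alpha_k^\vee$, and Corollary \ref{coridentity} is applied iteratively along $\alpha_{k-1},\ldots,\alpha_1$ with the dimension constraint handling any vanishing. Your bookkeeping of the vanishing cases (in particular noting that $\mbox{sgn}_k(w\omega_P\omega_{P'})=1$ forces $w$ to be the top element, which the dimension constraint rules out) is a bit more explicit than the paper's, which simply invokes the dimension constraint, but the argument is the same.
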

\begin{proof}
  Note that for $\lambda_P= \alpha_k^\vee+Q^\vee_P$, we have $\lambda_B=\alpha_k^\vee$ and consequently $\Delta_{P'}=\Delta_P\setminus\{\alpha_{k-1},   \alpha_{k+1}\}$.
Then conclude $\omega_P\omega_{P'}=s_{n}s_{n-1}\cdots s_{k+1}s_{1}s_{2}\cdots s_{k-1}$ by checking the assumptions in Lemma \ref{charalongest} directly.
Thus we have  $ N_{u, v}^{w,  1}= N_{u, v}^{w\omega_P\omega_{P'}, \alpha_k^\vee}=
                N_{us_k, vs_k}^{w\omega_P\omega_{P'},0}$, by using Proposition \ref{propmainthminLeungLi} (2).

Note that $\mbox{sgn}_{k-1}(us_k)=0$ and $\mbox{sgn}_{k-1}(w\omega_P\omega_{P'})=1$.
By Corollary \ref{coridentity}, we have $N_{us_k, vs_k}^{w\omega_P\omega_{P'}, 0}=N_{us_k, vs_ks_{k-1}}^{w\omega_P\omega_{P'}s_{k-1}, 0}$ if
 $\ell(vs_ks_{k-1})=\ell(vs_k)-1$, or $0$ otherwise. By induction and using   Corollary \ref{coridentity} repeatedly, we conclude
  $N_{us_k, vs_k}^{w\omega_P\omega_{P'}, 0}=N_{us_k, vs_ks_{k-1}\cdots s_1}^{w\omega_P\omega_{P'}s_{k-1}\cdots s_1, 0}$ if
 $\ell(vs_ks_{k-1}\cdots s_1)=\ell(vs_k)-(k-1)$, or $0$ otherwise.
 Furthermore, we note $\ell(us_k)+\ell(vs_k)=\ell(w\omega_P\omega_{P'})=\ell(w\omega_P\omega_{P'}s_{k-1}\cdots s_1)+(k-1)$.
 Thus if $\ell(vs_ks_{k-1}\cdots s_1)\neq \ell(vs_k)-(k-1)$, then we have
   $N_{us_k, vs_ks_{k-1}\cdots s_1}^{w\omega_P\omega_{P'}s_{k-1}\cdots s_1, 0}=0$, due to the dimension constraint. Hence, the statement follows.
\end{proof}

It remains to apply the classical Pieri rule and reformulate the product $\sigma^u\star \sigma^v$ in terms of $\sigma^p\star \sigma^{\mathbf{a}}$ (labbelld by partitions).

\bigskip
\begin{proof}[Proof of Proposition \ref{propquanpiericpxgrassm}]
Denote   $\mathbf{c}=\mathbf{c}(w)\in \mathcal{P}_{k}$.
It follows directly from Proposition \ref{propmainthminLeungLi} (1) and    the dimension constraint that
     $N_{us_k, vs_ks_{k-1}\cdots s_2s_1 }^{ws_{n}s_{n-1}\cdots s_{k+1}, 0}=0 $ unless
     $\tilde{\mathbf{a}}=\tilde{\mathbf{a}}(vs_ks_{k-1}\cdots s_2s_1)$ and
     $\tilde{\mathbf{c}}=\tilde{\mathbf{c}}(ws_ns_{n-1}\cdots s_{k+1})$ are both partitions in
     $\mathcal{P}_{k+1}$ for $Gr(k+1, n+1)$.
   As a consequence, we have that
    $\tilde a_{i}=vs_ks_{k-1}\cdots s_2s_1(k+2-i)-(k+2-i)=v(k+1-i)-(k+1-i)-1=a_i-1$ for each $1\leq i\leq k$,
     and that  $\tilde c_{j}=ws_ns_{n-1}\cdots s_{k+1}(k+2-j)-(k+2-j)=w(k+2-j)-(k+2-j)=c_{j-1}$ for each $2\leq j\leq k+1$.
 Note that $|\tilde{\mathbf{c}}|=\ell(ws_ns_{n-1}\cdots s_{k+1})=\ell(w)+n-k=|\mathbf{c}|+n-k$. We have $\tilde c_1=n-k$.
 For  $|\tilde{\mathbf{a}}|=\ell(vs_ks_{k-1}\cdots s_{1})=\ell(v)-k=|\mathbf{a}|-k$, we conclude $\tilde{a}_{k+1}=0$.
  In addition, we note that ${us_k}$ corresponds to the special partition
    $p-1$ in $\mathcal{P}_{k+1}$ for $Gr(k+1, n+1)$.   Hence,  the non-classical part of the quantum product $\sigma^p\star \sigma^{\mathbf{a}}$
      is exactly the second half as in the statement,   by using the classical Pieri rule for $\sigma^{p-1}\cup \sigma^{\tilde{\mathbf{a}}}$
      in $H^*(Gr(k+1, n+1))$.
\end{proof}

\begin{remark}
 This is the same approach as taken in the  elementary proof of the quantum Pieri rule by Buch  \cite{buch},
  where he uses a different method to obtain   Lemma \ref{lemmavanishcpxgrass} and Lemma \ref{lemmadegree1pieri}.
\end{remark}

\section{Acknowledgements}

The authors thank Leonardo Constantin Mihalcea and  Harry Tamvakis for their generous helps, and thank the referee for his/her valuable comments and suggestions. We also thank
Ionut Ciocan-Fontanine, Baohua Fu, Bumsig Kim, Thomas Lam
and Frank Sottile and for useful
discussions.  The first author is supported in part by a RGC research grant
from the Hong Kong government.  The second author is supported in part by  JSPS Grant-in-Aid for Young Scientists (B) 25870175. The second author is   grateful for
Korea Institute for Advanced Study, where he worked and this project started.

\end{document}